\documentclass[11pt]{article}
\usepackage[T1]{fontenc}
\usepackage[utf8]{inputenc}
\usepackage{geometry}
\usepackage{amsmath}
\usepackage{amsthm}
\usepackage{setspace}
\usepackage[unicode=true,
 bookmarks=false,
 breaklinks=false,pdfborder={0 0 1},colorlinks=true,allcolors=black]
 {hyperref}
\usepackage{comment}
\makeatletter

\theoremstyle{plain}
\newtheorem{thm}{\protect\theoremname}[section]
\theoremstyle{plain}
\newtheorem{lem}[thm]{\protect\lemmaname}
\theoremstyle{plain}

\theoremstyle{plain}
\newtheorem{cor}[thm]{\protect\corname}
\theoremstyle{plain}
\newtheorem{fact}[thm]{\protect\factname}
\theoremstyle{plain}
\newtheorem{conj}[thm]{\protect\conjecturename}
\numberwithin{equation}{section}

\usepackage{amssymb}

\allowdisplaybreaks

\makeatother

\providecommand{\claimname}{Claim}
\providecommand{\corname}{Corollary}
\providecommand{\lemmaname}{Lemma}
\providecommand{\theoremname}{Theorem}
\providecommand{\conjecturename}{Conjecture}
\providecommand{\factname}{Fact}

\newcommand{\su}{\subseteq}
\newcommand{\eps}{\varepsilon}

\title{On Graham's rearrangement conjecture}
\author{Huy Tuan Pham\thanks{Department of Mathematics, California Institute for Technology, Pasadena, CA 91125. Email: {\tt htpham@caltech.edu}. Research supported by a Clay Research Fellowship.} \and Lisa Sauermann\thanks{Institute for Applied Mathematics, University of Bonn, Germany. Email: {\tt sauermann@iam.uni-bonn.de}. Research supported by the DFG Heisenberg Program.}}
\begin{document}

\maketitle

\abstract{Graham conjectured in 1971 that for any prime $p$, any subset $S\subseteq \mathbb{Z}_p\setminus \{0\}$ admits an ordering $s_1,s_2,\dots,s_{|S|}$ where all partial sums $s_1, s_1+s_2,\dots,s_1+s_2+\dots+s_{|S|}$ are distinct. We prove this conjecture for all subsets $S\subseteq \mathbb{Z}_p\setminus \{0\}$ with $|S|\le p^{1-\alpha}$ and $|S|$ sufficiently large with respect to $\alpha$, for any $\alpha \in (0,1)$. Combined with earlier results, this gives a complete resolution of Graham's rearrangement conjecture for all sufficiently large primes $p$.}

\section{Introduction}

Given an abelian group $G$ and a subset $S\su G$, a \emph{valid ordering} of $S$ is an ordering $s_1,s_2,\dots,s_{|S|}$ of the elements of $S$ such that all partial sums $s_1,s_1+s_2,\dots,s_1+s_2+\dots+s_{|S|}$ are distinct. %

Valid orderings have been a subject of a number of works in combinatorics and group theory. The main subject of this paper is the a conjecture, called Graham's rearrangement conjecture, which was posed by Graham \cite[p. 36]{G} in 1971  and later reiterated by Erd\H{o}s and Graham \cite[p. 95]{EG}. 
\begin{conj}
    Any subset $S\su \mathbb{Z}_p\setminus \{0\}$ admits a valid ordering. 
\end{conj}

Graham's rearrangement conjecture has attracted a lot of interest over the past few years. Bedert and Kravitz \cite{BK} showed that the conjecture holds true for small sets $S$, namely for $|S| \le \exp((\log p)^{1/4})$, improving an earlier result of Kravitz \cite{Kr} and an observation of Sawin \cite{Sa}. M\"uyesser and Pokrovskiy \cite{MP} showed that the conjecture holds for very large $S$ of size $|S|\ge (1-o(1))p$. Buci\'c, Frederickson, M\"uyesser, Pokrovskiy and Yepremyan \cite{BFMPY} showed an approximate version of the conjecture, where all but $o(|S|)$ many partial sums are guaranteed to be distinct. Recently, Bedert, Buci\'c, Kravitz, Montgomery and M\"uyesser \cite{BBKMM} showed that the conjecture holds true for large sets $S$, namely for $|S| \ge p^{1-c}$ for a small positive constant $c$. 

The main result of this paper is the following theorem, which shows that Graham's rearrangement conjecture holds for all $S\su \mathbb{Z}_p\setminus \{0\}$ with $C_\alpha\le |S|\le p^{1-\alpha}$, for any $\alpha>0$ (with a suitable constant $C_\alpha>0$ depending on $\alpha$).

\begin{thm}\label{thm:main}
    For any $0<\alpha<1$, there exists a constant $C_\alpha>0$ such that the following holds. Let $p$ be a prime and let $S\subseteq \mathbb{Z}_p \setminus \{0\}$ be a subset with $C_\alpha\le |S|\le p^{1-\alpha}$. Then there exists a valid ordering of $S$, i.e.\ there is an ordering $s_1,s_2,\dots,s_{|S|}$ of the elements of $S$ such that all partial sums $s_1,s_1+s_2,\dots,s_1+s_2+\dots+s_{|S|}$ are distinct.  
\end{thm}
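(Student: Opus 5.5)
We plan to use rectification and then work in the integers. Write $n=|S|$. Since $n\le p^{1-\alpha}$, either $S$ has a lot of additive structure or it has essentially none, and we will treat these two regimes separately.

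\emph{Structured case.} Suppose some dilate $\lambda S$ ($\lambda\in\mathbb{Z}_p^*$) is mostly contained in a short interval, i.e.\ all but a few elements lie in $[-p/(10n),p/(10n)]$. Any ordering of a set that avoids $0$ is invariant under dilation for validity purposes, so we may replace $S$ by $\lambda S$. We then split $\lambda S=A\cup B$ with $A$ the bulk inside the short interval and $B$ a small exceptional set. Elements of $A$ lift to nonzero integers of absolute value $<p/(10n)$, so all partial sums of $A$ have absolute value $<p/10$ and no wraparound occurs. The task then becomes the integer problem: order a set of nonzero integers so that all partial sums are distinct. For this we use the standard greedy scheme. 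Take positive elements in increasing order and negative elements afterwards, or interleave them so that the partial sums first go monotonically up and then monotonically down with the sum kept away from previously visited values. Concretely, arrange the positives increasingly and then the negatives in an order making the descent avoid earlier values; this is a routine one-dimensional argument. The few elements of $B$ are placed at the start, where their sums can be controlled using that $B$ is small, with an absorbing adjustment of the first few elements of $A$.

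\emph{Unstructured case.} If no dilate of $S$ concentrates in a short interval, Fourier-analytic inequalities apply. A Bohr-set or Freiman-type argument, using $n\le p^{1-\alpha}$, shows that sums of random subsets of $S$ equidistribute on $\mathbb{Z}_p$ at scale $n^{O(1)}$. We will build the ordering in chunks. Order the first $n-m$ elements greedily (for a suitable $m=n^{1-\delta}$), keeping all partial sums distinct: each new element only needs to avoid at most $n$ forbidden values, and many choices remain as long as many unused elements exist. The last $m$ elements are handled by a random ordering. Its partial sums form a random walk which, by the equidistribution estimate, hits each previously used value with probability $O(1/p\cdot \text{poly}(n))$. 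A union bound over the roughly $nm$ pairs succeeds because $nm\,\text{poly}(n)/p\ll 1$ when $n\le p^{1-\alpha}$ and the exponent is tuned in terms of $\alpha$. This is where $C_\alpha$ enters.

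\emph{Main obstacle.} The main difficulty is the interface between the two regimes. We need a clean dichotomy lemma: either $S$ has a dilate with $(1-\epsilon)n$ elements in a short interval, or the random walk on the tail is anti-concentrated, with $\Pr[s_{i+1}+\dots+s_j=x]\le n^{-c}$ for every $x$ and every long enough window. In the intermediate situation, where $S$ is partially concentrated in a generalized arithmetic progression of dimension greater than $1$, both arguments fail. For that case we would first try a multidimensional version of the structured argument. We would apply Freiman's theorem to find a proper GAP $P$ of bounded dimension containing a large part of $S$, lift to $\mathbb{Z}^d$, and order using a lexicographic monotone scheme so that partial sums are distinct in $\mathbb{Z}^d$. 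Properness of $P$ then transfers distinctness back to $\mathbb{Z}_p$. Leftover elements would be absorbed at the end by the random-walk argument.
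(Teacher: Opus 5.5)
\textbf{There is a genuine gap, and it sits in your unstructured case, which for $|S|$ near $p^{1-\alpha}$ is essentially the whole theorem.}

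Three things fail there. First, the greedy phase is only guaranteed while the $i-1$ forbidden values at step $i$ are fewer than the $n-i$ unused elements. That covers about $n/2$ steps, not $n-n^{1-\delta}$ steps.

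Second, the union bound is false arithmetic. Short segments are far from equidistributed: for $S=\{\pm1,\dots,\pm n/2\}$, a random $k$-subset sums to $0$ with probability of order $1/(n\sqrt k)$ as long as $n\sqrt k\ll p$. So a per-pair bound $O(\mathrm{poly}(n)/p)$ forces $\mathrm{poly}(n)\gtrsim p/n$, and then $nm\cdot\mathrm{poly}(n)/p\gtrsim m\gg 1$. Even the idealized per-pair bound $1/p$ gives $nm/p$, which is below $1$ only if $m<p/n=p^{\alpha}$. That pushes almost everything back onto the greedy phase.

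Third, and more fundamentally, the level $1/p+C/(|S|\sqrt m)$ of Theorem~\ref{thm:anticonc} is essentially the truth. For sets like the one above, a uniformly random ordering has, in expectation, an unbounded number of zero-sum segments. The paper only gets $\mathbb{P}[b\in B(\sigma)]\le 3|S|^{-\alpha}$, so up to about $|S|^{1-\alpha}$ bad endpoints are expected, and no union bound removes them.

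The missing idea is to \emph{repair} collisions rather than avoid them. The paper starts from a uniformly random ordering. It uses anticoncentration for random subsets and random chains (Corollary~\ref{coro:anticonc-chain}, Lemma~\ref{lem:sum-of-weird-function}) to show three things:
\begin{itemize}
\item bad right endpoints avoid the last $30D$ positions;
\item any window of length $20D$ contains at most $D$ of them;
\item for each bad endpoint $b$, even after arbitrary admissible swaps to its right, at most $2D$ of the $5D$ positions $y\in(b,b+5D]$ are blocked.
\end{itemize}
It then processes bad endpoints from right to left, swapping each $b$ with an unused, unblocked $y$.

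\textbf{Your dichotomy is also not the right axis.}
\begin{itemize}
\item An interval of length $p/(5n)$ contains fewer than $n$ residues once $n>\sqrt{p/5}$. So for most of the range the structured case is vacuous. Meanwhile sets like $\{\pm1,\dots,\pm n/2\}$ with $n>\sqrt{p/5}$ land in your ``unstructured'' case without the equidistribution you claim.
\item The Freiman/GAP fallback needs small doubling, which a general $S$ lacks. Its quantitative losses are prohibitive at size $p^{1-\alpha}$; structural methods so far reach only $|S|\le\exp((\log p)^{1/4})$.
\item If the exceptional set $B$ has $\eps n$ elements, ``placing $B$ at the start'' is as hard as the original problem.
\item The integer step is not routine as described. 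For $\{1,2,3,-1,-2\}$, listing the positives increasingly and then the negatives in either order ends at $3=1+2$, a repeated partial sum.
\end{itemize}
In the paper no dichotomy is needed. Theorem~\ref{thm:anticonc} holds for every $S$. Its proof randomly partitions $S$ into $m$ blocks and applies Fourier analysis to one random element per block. Concentration then compares the random quantity $\psi(\chi)$ with the deterministic $\Psi(\chi)$, whose sublevel sets are bounded via Cauchy--Davenport. The local repair scheme is what turns this unconditional but only moderate anticoncentration into a valid ordering.
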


Together with the earlier results discussed above, this completely settles Graham's rearrangement conjecture for all sufficiently large primes $p$.

Unlike previous results towards Graham's rearrangement conjecture, employing structural additive combinatorial results, or graph-theoretic arguments relying on suitable notions of expansion and the absorption method, our proof is inherently probabilistic, relying on anticoncentration estimates on sums of random subsets of $S$ of a given size. More precisely, we prove the following anticoncentration result, which is an important input for our proof of Theorem \ref{thm:main}.

\begin{thm}\label{thm:anticonc}
    There exists an absolute constant $C>0$ such that the following holds. Let $p$ be a prime, let $S\subseteq \mathbb{Z}_p$ be a subset of size $|S|\ge 2$, and let $m$ be an integer with $C\log |S|\le m\le 10^{-3}|S|/\log |S|$. Now, let $R\su S$ be a uniformly random subset of $S$ of size $m$, and let  $\Sigma(R) = \sum_{x\in R}x\in \mathbb{Z}_p$ be the sum of the elements of $R$. Then 
    \[
        \max_{z\in \mathbb{Z}_p} \mathbb{P}[\Sigma(R)=z] \le \frac{1}{p} + \frac{C}{|S|\sqrt{m}}.
    \]
\end{thm}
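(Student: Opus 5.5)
We plan to prove Theorem~\ref{thm:anticonc} by Fourier analysis on $\mathbb{Z}_p$. Writing $e_p(x)=e^{2\pi i x/p}$, for every $z$ we have
\[
\mathbb{P}[\Sigma(R)=z]=\frac1p\sum_{\xi\in\mathbb{Z}_p}\mathbb{E}\bigl[e_p(\xi\Sigma(R))\bigr]e_p(-\xi z)\le \frac1p+\frac1p\sum_{\xi\neq 0}\bigl|\mathbb{E}\, e_p(\xi\Sigma(R))\bigr|.
\]
So it suffices to show $\sum_{\xi\ne0}|\mathbb{E}\,e_p(\xi\Sigma(R))|\le Cp/(|S|\sqrt m)$.

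The first step replaces the fixed-size random set by independent choices. Let $n=|S|$ and $q=m/n$, and let $R'$ contain each element of $S$ independently with probability $q$. Conditioning on $|R'|=m$, which happens with probability $\asymp 1/\sqrt m$, gives
\[
\mathbb{E}\, e_p(\xi\Sigma(R))=\frac{1}{\mathbb{P}[|R'|=m]}\int_0^1 \prod_{s\in S}\bigl(1-q+q\,e_p(\xi s)e(\theta)\bigr)e(-m\theta)\,d\theta .
\]
The product has absolute value at most $\prod_s\exp\bigl(-c q\,\|\xi s/p+\theta\|^2\bigr)$, where $\|\cdot\|$ is the distance to the nearest integer. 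This holds as long as $q\le 1/2$, which is true because $m\le 10^{-3}n/\log n$. Hence
\[
\sum_{\xi\ne0}|\mathbb{E}\, e_p(\xi\Sigma(R))|\lesssim \sqrt m\int_0^1\sum_{\xi\neq0}\exp\Bigl(-c\frac{m}{n}\sum_{s\in S}\|\xi s/p+\theta\|^2\Bigr)d\theta .
\]
Equivalently, this counts pairs $(\xi,\theta)$ for which the dilated set $\xi S+\theta p$ is mostly concentrated near $0$ modulo $p$.

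The second step estimates this level-set count, and it is the main obstacle. For a threshold $t$, let $N(t)$ be the measure of the set of $(\xi,\theta)$ with $\sum_s\|\xi s/p+\theta\|^2\le t$. We need $N(t)$ to be small: roughly, the sum over $\xi$ at level $t\approx n/m\cdot k$ should contribute about $p\,e^{-k}/(n m)$, so that after the factor $\sqrt m$ the total is $p/(n\sqrt m)$. The trivial contribution comes from $\xi$ for which $\theta$ can be chosen to make $\sum_s\|\xi s/p+\theta\|^2$ of order $n$. For generic $\xi$ the $\theta$-integral is about $e^{-cm}\le n^{-cC}$, because of the lower bound $m\ge C\log n$. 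Summing over all $p$ values of $\xi$ then gives $p\,n^{-cC}\sqrt m$, which is at most $p/(n\sqrt m)$ once $C$ is large.

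The remaining task is to bound the $\xi$ for which $\xi S$ is concentrated near a single point in the sense that many $s$ have $\|\xi s/p+\theta\|\le \delta$. For this we would use a moment or energy argument. The number of $\xi$ for which at least $k$ elements of $S$ lie in an arc of length $\delta$ after dilation is controlled by counting solutions to linear relations among elements of $S$. Concretely, we would use
\[
\sum_\xi \Bigl|\sum_{s}f(\xi s)\Bigr|^{2r},
\]
with $f$ a smooth bump of width $\delta$. Expanding this moment gives about $p\cdot\delta^{2r}n^{2r}$ plus degenerate terms. Since $p$ is prime and $\xi\ne0$, the main terms force $\delta$-approximate equalities, and these occur with density $\delta$ per constraint. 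Optimizing $r$ of order $\log n$ over dyadic scales of $\delta$ should give that the measure of near-concentrated $(\xi,\theta)$ at level $t$ is at most $p\,\exp(-ct\,m/n)$ times polynomial factors absorbed by $m\ge C\log n$.

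If the moment bound proves too lossy, we would fall back on a Halász-type inequality for this step. Summing the dyadic levels then yields the bound $C/(|S|\sqrt m)$. The delicate points are the uniform choice of $r$ and making sure the degenerate (diagonal) terms, which correspond to the $1/(n\sqrt m)$ local-limit term, are not overcounted.
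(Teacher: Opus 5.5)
Your first step is sound. The Fourier expansion, the Poissonization with $\mathbb{P}[|R'|=m]\asymp 1/\sqrt m$, and the bound $|1-q+q\,e(\phi)|\le \exp(-q\|\phi\|^2)$ for $q\le 1/2$ are all correct, and together they are a legitimate substitute for the paper's random partition into $m$ blocks. The gap is in the second step, which is where the whole content of the theorem lies.

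Write $n=|S|$. By Fact~\ref{fact1}, any $\xi$ admitting some $\theta$ with $\sum_{s}\|\xi s/p+\theta\|^2\le kn/m$ satisfies $\Psi(\xi)\le 4k$, so it lies in the paper's set $B_{4k}$. Conversely, $B_k$ is contained in your set at level $kn/m$. For fixed $\xi$, the admissible $\theta$ lie in an arc of length $O(\sqrt{k/m})$. So your reduction needs, essentially exactly, the bound $|B_k|-1=O\bigl(p\sqrt k/(n\sqrt m)\bigr)$ for every $S$ and every $1\le k\le cm$. It is needed above all for $k=O(1)$, since that is where the main term $1/(n\sqrt m)$ comes from. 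This bound is sharp: for $S=\{1,\dots,n\}$ and $p$ large there really are $\asymp p\sqrt k/(n\sqrt m)$ such nonzero $\xi$. So it is a genuine inverse Littlewood--Offord statement and has to be proved.

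Your moment sketch does not prove it. Even if granted, a bound of the shape $p\,e^{-ck}n^{O(1)}$ beats the required $O(pk/(nm))$ only when $k\gtrsim \log n$. For $k\lesssim\log n$ it is useless, and the hypothesis $m\ge C\log n$ gives no help there. The heuristic that approximate relations occur ``with density $\delta$ per constraint'' is also false for structured $S$. For an arithmetic progression the degenerate terms in $\sum_\xi|\sum_s f(\xi s)|^{2r}$ dominate, and bounding them for arbitrary $S$ is the very problem at hand. Pointing to a Halász-type inequality as a fallback does not supply the argument either.

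The missing idea is sumset growth on the frequency side. The paper proves $|B_t|\le 1+200p\sqrt t/(n\sqrt m)$ for $t\le m/2000$ (Lemma~\ref{lem:tildeC}) using dual sets $Q_{t,\delta}$:
\begin{itemize}
\item Markov gives $|Q_{t,10t/m}|\ge \frac{9}{10}n$.
\item Parseval gives $|Q_{t,1/200}|\le \frac54\, p/|B_t|$.
\item Fact~\ref{fact3} gives $kQ_{t,\delta}\subseteq Q_{t,k^2\delta}$.
\item Cauchy--Davenport with $k\asymp\sqrt{m/t}$ then produces the factor $\sqrt t/(n\sqrt m)$.
\end{itemize}
A direct Halász-style version also works. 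One has $\ell B_t\subseteq B_{\ell^2 t}$, Parseval gives $|B_{m/40}|\le 2p/n$, and Cauchy--Davenport descends from level $m/40$ to level $t$. With either lemma inserted, your Poissonized route would go through. It would even bypass the paper's Chernoff comparison between the random $\psi$ and the deterministic $\Psi$.
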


In our proof of Theorem \ref{thm:anticonc}, we represent a uniformly random subset $R$ of size $m$ by first partitioning $S$ randomly into $m$ parts of roughly equal size, and then choose a uniformly random element from each part. To understand the anticoncentration, given a fixed partition, we employ Fourier analytic ideas similar to the work of Nguyen and Vu \cite{NV} on the inverse  Littlewood--Offord problem. The key new step in the proof is the use of suitable concentration estimates to relate certain quantities involving the random partition of $S$ to their deterministic analogs. 

In order to also handle the case where $m$ is larger than the upper bound assumed in Theorem \ref{thm:anticonc}, we use the following corollary of Theorem \ref{thm:anticonc}.

\begin{cor}\label{coro:anticonc}
    For any $0<\eps<1$ There exists a constant $C'_\eps>0$ such that the following holds. Let $p$ be a prime, let $S\subseteq \mathbb{Z}_p$ be a subset of size $|S|\ge 2$, and consider a positive integer $m\le (1-\eps)|S|$. Now, let $R\su S$ be a uniformly random subset of $S$ of size $m$, and let  $\Sigma(R) = \sum_{x\in R}x\in \mathbb{Z}_p$ be the sum of the elements of $R$. Then 
    \[
        \max_{z\in \mathbb{Z}_p} \mathbb{P}[\Sigma(R)=z] \le \frac{1}{p} + \frac{C'_\eps\sqrt{\log |S|}}{|S|\sqrt{m}}.
    \]
\end{cor}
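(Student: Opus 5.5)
The idea is to reduce every $m\le(1-\eps)|S|$ to the range $C\log|S|\le m\le 10^{-3}|S|/\log|S|$ covered by Theorem~\ref{thm:anticonc}, using a two-stage sampling argument. Write $n=|S|$. Throughout, $C'_\eps$ is chosen large; in particular, if $n$ is below a constant depending on $\eps$, the bound exceeds $1$ and is trivial.

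\emph{Small $m$.} Suppose $m<C\log n$. For any fixed $x\in R$ and any $z$, we have $\Sigma(R)=z$ only if $x=z-\Sigma(R\setminus\{x\})$. Conditioning on the other elements, and using that the remaining element is uniform over at least $n-m+1\ge \eps n$ choices, gives $\max_z\mathbb{P}[\Sigma(R)=z]\le 1/(\eps n)$. Since $\sqrt{m}\le \sqrt{C\log n}$, this is at most $\frac{C'_\eps\sqrt{\log n}}{n\sqrt m}$ for $C'_\eps$ large. This is exactly where the $\sqrt{\log n}$ factor is needed.

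\emph{Middle $m$.} If $C\log n\le m\le 10^{-3}n/\log n$, Theorem~\ref{thm:anticonc} applies directly and gives an even stronger bound.

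\emph{Large $m$.} Suppose $10^{-3}n/\log n<m\le(1-\eps)n$. I would generate $R$ in two steps. First choose a uniformly random subset $T\subseteq S$ of size $t$, with $t$ to be fixed below. Then let $R=T\cup U$, where $U$ is a uniformly random $(m-t)$-subset of $S\setminus T$. This produces a uniform $m$-subset. Equivalently, and more usefully, first choose $R'=R$ and then $T$ as a random $t$-subset of $R$. So condition on $W=R\setminus T$, a uniform $(m-t)$-subset of $S$. Given $W$, the set $T$ is a uniform $t$-subset of $S\setminus W$, a set of size $n-m+t\ge \eps n$.

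Now apply Theorem~\ref{thm:anticonc} to the set $S\setminus W$ with $t$ chosen as large as allowed, $t\approx 10^{-3}\eps n/\log n$. This $t$ is at most $m$, and for $n$ large it satisfies $t\ge C\log(\eps n)$. Conditionally on $W$,
\[\mathbb{P}[\Sigma(T)=z-\Sigma(W)\mid W]\le \frac1p+\frac{C}{\eps n\sqrt t}\le \frac1p+\frac{C''_\eps\sqrt{\log n}}{n^{3/2}}.\]
Since $m\le n$, we have $n^{3/2}\ge n\sqrt m$, so the bound is at most $\frac1p+\frac{C''_\eps\sqrt{\log n}}{n\sqrt m}$. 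Averaging over $W$ gives the claim.

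The step I expect to be most delicate is checking the parameter constraints in this last case. Theorem~\ref{thm:anticonc} uses the size $|S\setminus W|$, not $n$, and we need $\log(\eps n)$ versus $\log n$ to line up in the range $C\log|S\setminus W|\le t\le 10^{-3}|S\setminus W|/\log|S\setminus W|$. The loss from taking $t\approx \eps n/\log n$ instead of $t\approx m$ is exactly the $\sqrt{\log n}$ factor, which the corollary permits.
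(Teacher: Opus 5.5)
Your proposal is correct and follows the paper's proof essentially step for step. For $m<C\log|S|$ you use the trivial bound $1/(|S|-m+1)$; in the middle range you apply Theorem~\ref{thm:anticonc} directly; and for large $m$ you condition on a uniform $(m-t)$-subset $W$ and apply Theorem~\ref{thm:anticonc} to a $t$-subset of $S\setminus W$ with $t\approx 10^{-3}\eps|S|/\log|S|$, which costs exactly the $\sqrt{\log|S|}$ factor.

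One small correction: the lower constraint you need is $t\ge C\log|S\setminus W|$. This follows from $t\ge C\log n$, since $|S\setminus W|\le n$, and not from $t\ge C\log(\eps n)$. The slip is harmless for large $n$.
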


Roughly speaking, our proof of Theorem \ref{thm:main} proceeds as follows. Starting from a random ordering of $S$, we perform local adjustments for every zero-sum segment (flipping the endpoint of the segment with another later element nearby in the ordering). Using the anticoncentration bounds in Theorem \ref{thm:anticonc} and Corollary \ref{coro:anticonc}, we show that this way we can obtain a valid ordering of $S$.

Beside the setup of the cyclic group in Graham's rearrangement conjecture, valid orderings have also been studied in other groups. Alspach \cite{AL, CP} conjectured an analog of Graham's rearrangement conjecture in arbitrary finite abelian groups. We believe that our argument may generalize to other abelian groups as well, and plan to return Alspach's conjecture in future work. %

\paragraph{Notation.} For a prime $p$, and any subset $S\su \mathbb{Z}_p$, we write $\Sigma(S) = \sum_{x\in S}x\in \mathbb{Z}_p$ for the sum of the elements in $S$. All logarithms are to base $e$, unless specified otherwise.

\paragraph{Organization.} After some preliminaries in Section \ref{sec:prelim}, we will prove our anticoncentration result, Theorem \ref{thm:anticonc}, in Section \ref{sec:anticonc}. In Section \ref{sec:anticonc-de}, we deduce some corollaries of the anticoncentration result, in particular Corollary \ref{coro:anticonc} and an anticoncentration result for a chain of random subsets of given sizes. Finally, we give the proof of out main result, Theorem \ref{thm:main}, in Section \ref{sec:rearrangement}, using the anticoncentration results in Section \ref{sec:anticonc-de}. 

\section{Preliminaries}\label{sec:prelim}

For $y\in\mathbb{R}$, let us define $\|y\|_\mathbb{Z} := \min_{z\in \mathbb{Z}} |y-z|$ to be the distance of $y$ to the closest integer (and note that $\|y\|_\mathbb{Z}\le 1$ for all $y\in\mathbb{R}$).

\begin{fact}\label{fact1}
For all $y_1,\dots,y_k\in\mathbb{R}$, we have
\[\|y_1+\dots+y_k\|_\mathbb{Z}^2\le k\cdot (\|y_1\|_\mathbb{Z}^2+\dots+\|y_k\|_\mathbb{Z}^2).\]
\end{fact}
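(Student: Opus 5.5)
The plan is to reduce Fact~\ref{fact1} to two elementary properties of $\|\cdot\|_\mathbb{Z}$: the triangle inequality, and the Cauchy--Schwarz inequality applied to a sum of $k$ nonnegative numbers.

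\textbf{Step 1: triangle inequality.} For each $i$, choose an integer $z_i$ with $|y_i-z_i|=\|y_i\|_\mathbb{Z}$; the minimum is attained because only finitely many integers lie within distance $1$ of $y_i$. Since $z_1+\dots+z_k$ is an integer, the definition of $\|\cdot\|_\mathbb{Z}$ gives
\[\|y_1+\dots+y_k\|_\mathbb{Z}\le \Big|\sum_{i=1}^k (y_i-z_i)\Big| \le \sum_{i=1}^k |y_i-z_i| = \sum_{i=1}^k \|y_i\|_\mathbb{Z}.\]

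\textbf{Step 2: Cauchy--Schwarz.} Square both sides of the bound from Step~1; this is allowed because both sides are nonnegative. Then apply Cauchy--Schwarz in the form $\big(\sum_{i=1}^k a_i\big)^2\le k\sum_{i=1}^k a_i^2$ with $a_i=\|y_i\|_\mathbb{Z}\ge 0$. This is exactly the claimed inequality.

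No step is a real obstacle. The only point to check is that the integer minimizer $z_i$ exists. One could avoid this by taking an infimum instead, but existence is immediate here.
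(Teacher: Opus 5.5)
Your proof is correct and matches the paper's argument step for step: choose nearest integers $z_i$, use the triangle inequality to get $\|y_1+\dots+y_k\|_\mathbb{Z}\le\sum_i\|y_i\|_\mathbb{Z}$, then square and apply Cauchy--Schwarz. Your remark that the minimizing integer $z_i$ exists is a small detail the paper leaves implicit.
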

\begin{proof}
    Let $z_i\in \mathbb{Z}$ be such that $\|y_i\|_{\mathbb{Z}} = |y_i-z_i|$. We then have 
    \[
        \|y_1+\dots +y_k\|_{\mathbb{Z}} \le |(y_1+\dots +y_k)-(z_1+\dots +z_k)|=\Bigg|\sum_{i=1}^{k} (y_i-z_i)\Bigg|\le \sum_{i=1}^{k} |y_i-z_i|= \sum_{i=1}^{k} \|y_i\|_{\mathbb{Z}}.
    \]
    By Cauchy-Schwarz inequality, we can then conclude that 
    \[
        \|y_1+\dots +y_k\|_{\mathbb{Z}}^2 \le \Bigg(\sum_{i=1}^{k} \|y_i\|_{\mathbb{Z}}\Bigg)^2 \le k \cdot (\|y_1\|_\mathbb{Z}^2+\dots+\|y_k\|_\mathbb{Z}^2). \qedhere
    \]
\end{proof}

\begin{fact}\label{fact2}
For any $y\in\mathbb{R}$, we have
\[ 1-20\|y\|_\mathbb{Z}^2 \le \cos(2\pi y)\le 1 - 2\|y\|_\mathbb{Z}^2.\]
\end{fact}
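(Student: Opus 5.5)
The plan is to reduce to $t:=\|y\|_\mathbb{Z}\in[0,1/2]$ and then check two elementary one-variable inequalities on that interval. Since $\cos(2\pi y)$ is $1$-periodic and even, replacing $y$ by $y-z$ for the nearest integer $z$, and then by its absolute value if needed, does not change either side. So it suffices to prove
\[1-20t^2\le \cos(2\pi t)\le 1-2t^2 \qquad\text{for } t\in[0,1/2].\]

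\emph{Lower bound.} I will use the standard inequality $\cos x\ge 1-x^2/2$, valid for all real $x$. One way to see it: $g(x)=\cos x-1+x^2/2$ satisfies $g(0)=0$ and $g'(x)=x-\sin x\ge 0$ for $x\ge 0$, and $g$ is even. Applying this with $x=2\pi t$ gives
\[\cos(2\pi t)\ge 1-2\pi^2t^2\ge 1-20t^2,\]
because $2\pi^2\approx 19.74<20$.

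\emph{Upper bound.} I need $1-\cos(2\pi t)=2\sin^2(\pi t)\ge 2t^2$, that is, $\sin(\pi t)\ge t$ on $[0,1/2]$. Here I use concavity of $\sin$ on $[0,\pi/2]$, which gives Jordan's inequality $\sin x\ge (2/\pi)x$ on that interval. With $x=\pi t$ this yields $\sin(\pi t)\ge 2t\ge t$. This step actually has room to spare, since it gives the stronger bound $1-\cos(2\pi t)\ge 8t^2$.

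The only step needing any care is the concavity and Jordan inequality argument in the upper bound. Everything else is routine, and there is no real obstacle.
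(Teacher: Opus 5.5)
Your proof is correct, and the reduction to $t=\|y\|_\mathbb{Z}\in[0,1/2]$ via periodicity and evenness is exactly the paper's. For the lower bound you prove the same inequality $\cos x\ge 1-x^2/2$ that the paper gets from Taylor's theorem with third-order Lagrange remainder, which is nonnegative since $\sin(2\pi\xi)\ge 0$ for $\xi\in[0,1/2]$. Your monotonicity argument for it is equivalent.

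The real difference is in the upper bound. The paper expands to fourth order and absorbs the remainder $\frac{(2\pi)^4\cos(2\pi\xi)}{24}y^4$ into $\frac{\pi^4}{6}y^2$, using $\cos\le 1$ and $y\le 1/2$. It then relies on the numerical fact $2\pi^2-\pi^4/6\approx 3.5\ge 2$. You instead use $1-\cos(2\pi t)=2\sin^2(\pi t)$ together with concavity of $\sin$ on $[0,\pi/2]$. This avoids higher-order terms and numerics altogether and gives $1-\cos(2\pi t)\ge 8t^2$. The constant $8$ is in fact optimal on $[0,1/2]$, with equality at $t=1/2$.

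The paper's route handles both sides with one mechanical tool, while yours is cleaner and sharper. Only the constant $2$ is used later, in Fact~\ref{fact4} and the resulting bound on $|\mathbb{E}_X[\chi(X_i)]|$, so the sharper constant does not affect anything downstream.
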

\begin{proof}
    Note that all of the terms in the inequality are $1$-periodic functions of $y$, which are furthermore symmetric around $0$. It therefore suffices to check the inequality for $y\in[0,1/2]$.
    
    For $y\in[0,1/2]$, we have $\|y\|_\mathbb{Z}=y$, and therefore the desired inequality simplifies to
    \[1-20y^2 \le \cos(2\pi y)\le 1-2y^2.\]
    By Taylor's theorem (with the Lagrange form of the remainder), for any $y\in[0,1/2]$, we have
    \[\cos(2\pi y)=1-\frac{(2\pi)^2}{2}y^2+\frac{(2\pi)^3\sin(2\pi \xi)}{6}y^3\]
    for some $\xi\in [0,y]\su [0,1/2]$. Since $\sin(2\pi \xi)\ge 0$ and $y\in [0,1/2]$, we can conclude
    \[\cos(2\pi y)\ge 1-\frac{(2\pi)^2}{2}y^2= 1-(2\pi^2)y^2\ge 1-20y^2.\]
    Similarly, we have
    \[
    \cos(2\pi y)=1-\frac{(2\pi)^2}{2}y^2+\frac{(2\pi)^4\cos(2\pi \xi)}{24}y^4,
    \]
    for some $\xi \in [0,y]$. Using that $\cos(2\pi \xi)\le 1$ and $y\in [0,1/2]$, we have
    \[\cos(2\pi y)\le 1-\frac{(2\pi)^2}{2}y^2 + \frac{(2\pi)^4}{24} \cdot \Big(\frac{1}{2}\Big)^2 y^2 \le  1-2y^2.\qedhere\]
\end{proof}

For a prime $p$, and $y\in\mathbb{Z}$, we furthermore define $\|y\|_p:=\|y/p\|_\mathbb{Z} = \min_{z\in \mathbb{Z}} |y/p-z|$. Noting that $\|y\|_p$ is $p$-periodic, this allows us to define $\|x\|_p$ for any $x\in\mathbb{Z}_p$ by setting $\|x\|_p:=\|y\|_p$ for any representative $y\in\mathbb{Z}$ of the residue class $x\in\mathbb{Z}_p$.

\begin{fact}\label{fact3}
For a prime $p$, and $x_1,\dots,x_k\in\mathbb{Z}_p$, we have
\[\|x_1+\dots+x_k\|_p^2\le k\cdot (\|x_1\|_p^2+\dots+\|x_k\|_p^2).\]
\end{fact}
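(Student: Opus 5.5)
The plan is to reduce Fact \ref{fact3} directly to Fact \ref{fact1} by lifting to integer representatives and using that $\|\cdot\|_p$ is defined through $\|\cdot\|_\mathbb{Z}$.

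First choose, for each $i=1,\dots,k$, an integer representative $y_i\in\mathbb{Z}$ of the residue class $x_i\in\mathbb{Z}_p$. By definition, $\|x_i\|_p=\|y_i\|_p=\|y_i/p\|_\mathbb{Z}$. Next, the integer $y_1+\dots+y_k$ is a representative of the residue class $x_1+\dots+x_k\in\mathbb{Z}_p$, since reduction modulo $p$ is a ring homomorphism. Because $\|\cdot\|_p$ is $p$-periodic on $\mathbb{Z}$, the value $\|x_1+\dots+x_k\|_p$ does not depend on which representative is used. We therefore obtain
\[
\|x_1+\dots+x_k\|_p=\|y_1+\dots+y_k\|_p=\Big\|\frac{y_1}{p}+\dots+\frac{y_k}{p}\Big\|_\mathbb{Z}.
\]

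Finally, apply Fact \ref{fact1} to the real numbers $y_1/p,\dots,y_k/p$. This gives
\[
\Big\|\frac{y_1}{p}+\dots+\frac{y_k}{p}\Big\|_\mathbb{Z}^2\le k\cdot\Big(\Big\|\frac{y_1}{p}\Big\|_\mathbb{Z}^2+\dots+\Big\|\frac{y_k}{p}\Big\|_\mathbb{Z}^2\Big)=k\cdot(\|x_1\|_p^2+\dots+\|x_k\|_p^2),
\]
which is the claimed inequality.

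There is no real obstacle here. The only point needing care is the independence of $\|\cdot\|_p$ from the choice of representative, and this is exactly the $p$-periodicity already noted when $\|x\|_p$ was defined for $x\in\mathbb{Z}_p$.
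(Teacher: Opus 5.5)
Your proof is correct and is essentially the paper's argument: lift to integer representatives $y_i$, observe that $y_1+\dots+y_k$ represents $x_1+\dots+x_k$, and apply Fact~\ref{fact1} to the reals $y_1/p,\dots,y_k/p$. The paper's written proof cites Fact~\ref{fact3} at this step, which is a typo for Fact~\ref{fact1}; your citation is the correct one.
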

\begin{proof}
    Choosing representatives $y_1,\dots,y_k\in\mathbb{Z}$ for $x_1,\dots,x_k\in\mathbb{Z}_p$, it follows from Fact \ref{fact3} that
    \[\|x_1+\dots+x_k\|_p^2=\|(y_1/p)+\dots+(y_k/p)\|_\mathbb{Z}^2\le k\cdot (\|y_1/p\|_\mathbb{Z}^2+\dots+\|y_k/p\|_\mathbb{Z}^2)=k\cdot (\|x_1\|_p^2+\dots+\|x_k\|_p^2).\qedhere\]
\end{proof}

Finally, we record an easy consequence of the Cauchy-Davenport theorem.

\begin{fact}\label{fact:cauchy-davenport}
    For a prime $p$, a subset $A\su \mathbb{Z}_p$, and a positive integer $k$, let us consider the $k$-fold sumset $kA=A+\dots+A=\{a_1+\dots+a_k:a_1,\dots,a_k\in A\}$. If $kA\ne \mathbb{Z}_p$, we have
    \[|kA|\ge 1+k\cdot(|A|-1).\]
\end{fact}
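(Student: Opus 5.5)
The statement to prove is Fact \ref{fact:cauchy-davenport}. The plan is to apply the Cauchy--Davenport theorem iteratively to the sumsets $jA$ for $j=1,\dots,k$. Recall that Cauchy--Davenport says that for nonempty $X,Y\su\mathbb{Z}_p$,
\[|X+Y|\ge \min(p,\,|X|+|Y|-1).\]
We may assume $A\neq\emptyset$: if $A$ were empty, then $kA$ would be empty and the inequality would fail, so the statement implicitly assumes $A$ is nonempty.

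We prove by induction on $j$ that $|jA|\ge 1+j(|A|-1)$ for all $1\le j\le k$. The base case $j=1$ is trivial, since then both sides equal $|A|$.

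For the inductive step, take $2\le j\le k$ and write $jA=(j-1)A+A$. The key observation is that $jA\ne\mathbb{Z}_p$. Indeed, fix any $a\in A$. Then $kA\supseteq jA+(k-j)a$, which is a translate of $jA$. If $jA=\mathbb{Z}_p$, this translate would be all of $\mathbb{Z}_p$, forcing $kA=\mathbb{Z}_p$ and contradicting the hypothesis.

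Since $jA\ne\mathbb{Z}_p$, we have $|jA|<p$, so the minimum in Cauchy--Davenport cannot be $p$. Therefore
\[|jA|\ge |(j-1)A|+|A|-1\ge 1+(j-1)(|A|-1)+|A|-1=1+j(|A|-1),\]
using the induction hypothesis in the second inequality. This completes the induction, and taking $j=k$ gives the claim.

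The only step needing care is ruling out the case where the minimum in Cauchy--Davenport equals $p$ at some intermediate stage. This is exactly what the translation argument does: it shows that $kA\ne\mathbb{Z}_p$ implies $jA\ne\mathbb{Z}_p$ for every $j\le k$.
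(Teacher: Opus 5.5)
Your proof is correct and takes the same route as the paper: you iterate Cauchy--Davenport along $jA=(j-1)A+A$, using that $kA\ne\mathbb{Z}_p$ forces $jA\ne\mathbb{Z}_p$ for all $j\le k$, and your translation argument $kA\supseteq jA+(k-j)a$ justifies this explicitly where the paper only says ``hence in particular''. One small correction to your side remark: for $A=\emptyset$ the inequality does not fail but holds trivially, since $|kA|=0\ge 1-k=1+k(|A|-1)$, so the empty case is covered rather than excluded.
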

\begin{proof}
Recall that the Cauchy-Davenport theorem states that for any subsets $A,B\su \mathbb{Z}_p$ with $A+B\ne \mathbb{Z}_p$, we have $|A+B|\ge |A|+|B|-1$, and hence $|A+B|-1\ge (|A|-1)+(|B|-1)$. Applying this repeatedly, for any subsets $A_1,\dots,A_k$ with $A_1+\dots+A_k\ne \mathbb{Z}_p$ (and hence in particular $A_1+\dots+A_j\ne \mathbb{Z}_p$ for $j=1,\dots,k$) we can conclude
\[|A_1+\dots+A_k|-1\ge (|A_1+\dots+A_{k-1}|-1)+(|A_k|-1)\ge \dots\ge (|A_1|-1)+\dots+(|A_k|-1)\]
Taking $A_1=\dots=A_k=A$ yields $|kA|-1\ge k\cdot(|A|-1)$, which implies the desired inequality.
\end{proof}

Finally, for a prime $p$ and $y\in\mathbb{Z}$, we define $e_p(y)=\exp(2\pi i y/p)$. Noting that this function is $p$-periodic, we can now defined $e_p(x)$ for any $x\in \mathbb{Z}_p$ by setting $e_p(x)=e_p(y)=\exp(2\pi i y/p)$ for any representative $y\in\mathbb{Z}$ of the residue class $x\in\mathbb{Z}_p$.

\begin{fact}\label{fact4}
    For a prime $p$, and $x\in \mathbb{Z}_p$, we have
    \[\operatorname{Re}(e_p(x))\le 1-2\|x\|_p^2.\]
\end{fact}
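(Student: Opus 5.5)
The plan is to reduce Fact \ref{fact4} directly to the upper bound in Fact \ref{fact2}, choosing a convenient representative of the residue class $x$.

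Let $y\in\mathbb{Z}$ be any representative of $x\in\mathbb{Z}_p$. By definition, $e_p(x)=\exp(2\pi i y/p)$. Hence
\[
\operatorname{Re}(e_p(x))=\cos(2\pi y/p).
\]
This value does not depend on the choice of representative, because $\cos(2\pi\,\cdot)$ is $1$-periodic and changing $y$ by a multiple of $p$ changes $y/p$ by an integer.

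Next, apply the upper bound of Fact \ref{fact2} with the real number $y/p$ in place of $y$. This gives
\[
\cos(2\pi y/p)\le 1-2\|y/p\|_\mathbb{Z}^2 .
\]

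Finally, by the definition of $\|\cdot\|_p$ we have $\|y/p\|_\mathbb{Z}=\|y\|_p=\|x\|_p$. Substituting this yields
\[
\operatorname{Re}(e_p(x))\le 1-2\|x\|_p^2,
\]
as claimed.

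None of these steps is a real obstacle. The only point needing care is the well-definedness of both sides under a change of representative, and this has already been handled by the periodicity remarks made when $\|\cdot\|_p$ and $e_p$ were defined.
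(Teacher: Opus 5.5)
Your proposal is correct and follows essentially the same route as the paper. Both pick a representative $y$ of $x$, write $\operatorname{Re}(e_p(x))=\cos(2\pi y/p)$, apply the upper bound of Fact~\ref{fact2} to $y/p$, and use $\|y/p\|_\mathbb{Z}=\|y\|_p=\|x\|_p$.
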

\begin{proof}
    Letting $y\in \mathbb{Z}$ be a representative of the residue class $x\in\mathbb{Z}_p$, by Fact \ref{fact2} we have
    \[\operatorname{Re}(e_p(x))=\operatorname{Re}(e_p(y))=\operatorname{Re}(\exp(2\pi i y/p))=\cos(2\pi y/p)\le 1-2\|y/p\|_\mathbb{Z}^2=1-2\|y\|_p^2=1-2\|x\|_p^2.\qedhere\]
\end{proof}

\section{Anticoncentration on boolean slices via Fourier Analysis}\label{sec:anticonc}

In this section, we prove Theorem \ref{thm:main}, taking $C=2^{24}$. So let $p$ be a prime, and $S\su \mathbb{Z}_p$ a subset of size $|S|\ge 2$. Furthermore, let $m$ be a positive integers with $2^{24}\log |S|=C\log |S|\le m\le 10^{-3}|S|/\log |S|$. Note that this in particular implies $|S|\ge m\ge 2^{24}\ge 10^7$.

Let $\widehat{\mathbb{Z}_p}$ denote the set of Fourier characters over $\mathbb{Z}_p$ (recall that these are the group homomorphisms $\chi:\mathbb{Z}_p\to \{z\in \mathbb{C}: |z|=1\}$). Also recall that we can identify a character $\chi:\mathbb{Z}_p\to \{z\in \mathbb{C}: |z|=1\}$ with an element $\chi\in \mathbb{Z}_p$ via 
\[
    \chi(x) = e_p(\chi x)
\]
(recall that in Section \ref{sec:prelim}, for any $x\in \mathbb{Z}_p$ we defined $e_p(x)=\exp(2\pi i y/p)$ where $y\in \mathbb{Z}$ is a representative of the residue class $x\in \mathbb{Z}_p$).

Given the set $S\subseteq \mathbb{Z}_p$, in the statement of Theorem \ref{thm:main}  we consider a uniformly random subset $R\su S$ of size $m$. We can sample such a uniformly random size-$m$ subset in the following way, which will be helpful in our proof. %
We first randomly partition the elements of $S$ into $m$ sets $S_1,\dots,S_m$ with $|S_1|\ge |S_2|\ge \dots\ge |S_m|\ge |S_m|-1$ (this means that $S_1,\dots,S_m$ each have size $\lfloor |S|/m\rfloor+1$ or $\lfloor |S|/m\rfloor$, and exactly the first $|S|-m\lfloor |S|/m\rfloor$ of the sets $S_1,\dots,S_m$ have size $\lceil |S|/m\rceil+1$). Since $m\le |S|/4$, all of the sets $S_1,\dots,S_m$ have size at least $4$. We then independently pick exactly one element $X_i$ from each set $S_i$ to form a set $R = \{X_1,\dots,X_m\}$. It is easy to see that $R$ is distributed precisely as a uniformly random subset of $S$ of size $m$. Also note that  $\Sigma(R)=X_1+\dots+X_m$. We will write $\mathcal{S}=(S_1,\dots,S_m)$ for the $m$-tuple of sets in the partition of $S$, and $X=(X_1,\dots,X_m)$ for the $m$-tuple of elements chosen from these sets. %

We write $\mathbb{P}_\mathcal{S}$ and $\mathbb{E}_\mathcal{S}$ for probabilities and expectations over the randomness of $\mathcal{S}=(S_1,\dots,S_m)$, i.e. over the randomness of the partition $S=S_1\cup\dots\cup S_m$. For a fixed outcome of $\mathcal{S}=(S_1,\dots,S_m)$, we write $\mathbb{P}_X$ and $\mathbb{E}_X$ to denote probabilities and expectations over the randomness of $X=(X_1,\dots,X_n)$, conditional on $\mathcal{S}=(S_1,\dots,S_m)$. Note that conditional on $\mathcal{S}=(S_1,\dots,S_m)$, the elements $X_i\in S_i$ are independent uniformly random elements of $S_i$ for $i=1,\dots,m$.

Now, we have
\[
    \mathbb{P}[\Sigma(R)=z] = \mathbb{E}_\mathcal{S} [\mathbb{P}_X[\Sigma(R) = z]],
\]
where the probability $\mathbb{P}_X[\Sigma(R) = z]$ is conditional on $\mathcal{S}=(S_1,\dots,S_m)$, over the randomness of $X=(X_1,\dots,X_n)$.

Let us fix an outcome of $\mathcal{S}=(S_1,\dots,S_m)$. 
The standard identity
\begin{equation*}%
    \mathbb{P}_X[\Sigma(R) = z] =\mathbb{P}_X[X_1+\dots+X_m =z]= \frac{1}{p} \mathbb{E}_X\!\Bigg[\sum_{\chi \in \widehat{\mathbb{Z}_p}} \!\chi(X_1+\dots+X_m-z) \Bigg]\!= \frac{1}{p}\sum_{\chi \in \widehat{\mathbb{Z}_p}} \!\chi(-z)\prod_{i=1}^{m} \mathbb{E}_X[\chi(X_i)]
\end{equation*}
gives
\begin{equation}\label{eq:fourier-expand}
    \mathbb{P}_X[\Sigma(R) = z] \le \frac{1}{p}\sum_{\chi \in \widehat{\mathbb{Z}_p}} \bigg|\chi(-z)\prod_{i=1}^{m} \mathbb{E}[\chi(X_i)]\bigg|=\frac{1}{p}\sum_{\chi \in \widehat{\mathbb{Z}_p}} \ \prod_{i=1}^{m} |\mathbb{E}_X[\chi(X_i)]|.
\end{equation}

For any $\chi \in \widehat{\mathbb{Z}_p}$ and any $i=1,\dots,m$, we have
\[|\mathbb{E}_X[\chi(X_i)]| = \Bigg| \frac{1}{|S_i|} \sum_{x\in S_i} \chi(X_i) \Bigg|= \Bigg| \frac{1}{|S_i|}\sum_{x\in S_i} e_p(\chi x) \Bigg|=  \Bigg(\frac{1}{|S_i|^2}\sum_{x\in S_i} e_p(\chi x) \cdot \overline{\sum_{x\in S_i} e_p(\chi x)}\Bigg)^{1/2}\]
As $\sum_{x\in S_i} e_p(\chi x) \cdot \overline{\sum_{x\in S_i} e_p(\chi x)}\in \mathbb{R}$ and $\overline{e_p(\chi x)}=e_p(-\chi x)$ for all $x\in \mathbb{Z}_p$, we can observe that
\begin{align*}
\frac{1}{|S_i|^2}\sum_{x\in S_i} e_p(\chi x) \cdot \overline{\sum_{x\in S_i} e_p(\chi x)}&=\frac{1}{|S_i|^2}\operatorname{Re}\Bigg(\sum_{x\in S_i} e_p(\chi x) \cdot \sum_{x\in S_i} \overline{e_p(\chi x)}\Bigg)\\
&=\frac{1}{|S_i|^2}\operatorname{Re}\Bigg(\sum_{x, x'\in S_i}  e_p(\chi x-\chi x')\Bigg)\\
&\le\frac{1}{|S_i|^2}\sum_{x, x'\in S_i}  (1-2\|\chi x-\chi x'\|_p^2)\\
&=1-\frac{2}{|S_i|^2}\sum_{x, x'\in S_i}  \|\chi x-\chi x'\|_p^2,
\end{align*}
where the inequality is due to Fact \ref{fact4}. This yields
\[|\mathbb{E}_X[\chi(X_i)]|\le \Bigg(1-\frac{2}{|S_i|^2}\sum_{x, x'\in S_i}  \|\chi x-\chi x'\|_p^2\Bigg)^{1/2}\le \exp\Bigg(-\frac{1}{|S_i|^2}\sum_{x, x'\in S_i}  \|\chi x-\chi x'\|_p^2\Bigg).\]
Thus, we obtain
\begin{equation}\label{eq:bound-with-psi-for-one-chi}
\prod_{i=1}^{m} |\mathbb{E}_X[\chi(X_i)]|\le \exp\Bigg(-\sum_{i=1}^{m}\frac{1}{|S_i|^2}\sum_{x, x'\in S_i} \|\chi x-\chi x'\|_p^2\Bigg)=\exp(-\psi(\chi)),
\end{equation}
where we define
\[
    \psi(\chi) := \sum_{i=1}^{m} \frac{1}{|S_i|^2} \sum_{x,x'\in S_i} \|\chi x-\chi x'\|_p^2
\]
for any $\chi\in  \mathbb{Z}_p$.

Note that the value of $\psi(\chi)$ depends on the outcome of $\mathcal{S}=(S_1,\dots,S_m)$, and that for the trivial character corresponding to the element $\chi=0\in \mathbb{Z}_p$, we always have $\psi(\chi)=0$. Also note that for all $\chi\in  \mathbb{Z}_p$, we have
\begin{equation}\label{eq:ineq-for-psi}
    \psi(\chi) \ge \frac{m^2}{2|S|^2}\sum_{i=1}^{m} \sum_{x,x'\in S_i} \|\chi x-\chi x'\|_p^2,
\end{equation}
since for $i=1,\dots,m$ we have $|S_i|\le \lfloor |S|/m\rfloor+1\le \sqrt{2}|S|/m$. Finally, since $\|\chi x-\chi x'\|_p^2\le 1$ for all $x,x'\in S$, we always have $\psi(\chi)\le m$.

Now, let
\[
    A_0 := \Bigg\{\chi\in  \mathbb{Z}_p  : \psi(\chi) \in [0,1) \Bigg\},
\]
and for any positive integer $t$, let
\[
    A_t := \Bigg\{\chi\in  \mathbb{Z}_p  : \psi(\chi) \in [t,2t) \Bigg\}.
\]
Then we obtain a partition $\mathbb{Z}_p=A_0\cup A_1\cup A_2\cup A_4\cup A_8\cup \dots \cup A_{2^{\lfloor \log_2 m\rfloor}}$ (noting that $A_t=\emptyset$ for $t>m$). Furthermore, by \eqref{eq:fourier-expand} and \eqref{eq:bound-with-psi-for-one-chi} we have
\[
   \mathbb{P}_X[\Sigma(R) = z] \le \frac{1}{p}\sum_{\chi \in \widehat{\mathbb{Z}_p}} \ \prod_{i=1}^{m} |\mathbb{E}_X[\chi(X_i)]|  \le \frac{1}{p}\sum_{\chi \in \mathbb{Z}_p}\exp(-\psi(\chi)) \le \frac{1}{p}|A_0|+\frac{1}{p}\sum_{\ell= 0}^{\lfloor \log_2 m\rfloor} |A_{2^{\ell}}| \exp(-2^{\ell}).
\]
Henceforth, we obtain
\begin{align}
    \mathbb{P}[\Sigma(R)=z] &= \mathbb{E}_\mathcal{S} [\mathbb{P}_X[\Sigma(R) = z]]\notag \\
    &\le \mathbb{E}_\mathcal{S}\Bigg[\frac{1}{p}|A_0|+\frac{1}{p}\sum_{\ell= 0}^{\lfloor \log_2 m\rfloor} |A_{2^{\ell}}| \exp(-2^{\ell})\Bigg] \notag\\
    &= \frac{1}{p} \mathbb{E}_\mathcal{S}[|A_0|]+\frac{1}{p}\sum_{\ell= 0}^{\lfloor \log_2 m\rfloor} \mathbb{E}_\mathcal{S}[|A_{2^{\ell}}|] \cdot \exp(-2^{\ell}). \label{eq:prob}
\end{align}

In the remaining part of the section, we will bound $\mathbb{E}_\mathcal{S}[|A_{t}|]$ for $t \in \{0,1,2,4,8,\dots\}$. For $\chi\in \mathbb{Z}_p$, let us  define
\[
    \Psi(\chi) =  \frac{m}{|S|^2}\sum_{x,x'\in S} \|\chi x-\chi x'\|_p^2.
\]
For any positive integer $t$, let
\[
    B_t = \{\chi \in \mathbb{Z}_p : \Psi(\chi) \le t\}.
\]
Furthermore, let $D_t$ be the collection of $\chi \in \mathbb{Z}_p\setminus\{0\}$ such that there is some $y\in \mathbb{Z}_p$ with
\[\Big|\Big\{x\in S: \|\chi x-y\|_p\le 8\sqrt{t/m}\Big\}\Big|\ge \frac{3}{4}|S|\]
(in other words, this means that there is some ``interval'' in $\mathbb{Z}_p$ of length $16p\sqrt{t/m}$ containing at least a $(3/4)$-fraction of the elements of $\chi S$).

Note that, for any positive integer $t$, the sets $B_t$ and $D_t$ do not depend on the outcome of $\mathcal{S}=(S_1,\dots,S_m)$ (they only depend on the given set $S\su \mathbb{Z}_p$). Our next aim is to relate the sets $A_t$, which do depend on the outcome of $\mathcal{S}=(S_1,\dots,S_m)$, to $B_t$ and $D_t$. More specifically, we will show that any $\chi\in \mathbb{Z}_p$ with $\chi\not\in D_t$ is contained in $A_t$ with small probability. Similarly, we will show any $\chi\in D_t\setminus B_{32t}$ is contained in $A_t$ with small probability. We will then obtain the desired bound for $\mathbb{E}_\mathcal{S}[|A_t|]$ by upper-bounding $|B_{32t}|$. 

\begin{lem}\label{lem:est1}
    Let $t$ be a positive integer, and let $\chi \in \mathbb{Z}_p\setminus\{0\}$  with $\chi\not\in D_t$. Then we have
    \[
        \mathbb{P}_\mathcal{S}[\psi(\chi) < 2t] \le \frac{1}{|S|^9}. 
    \]
\end{lem}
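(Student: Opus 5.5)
The plan is to show that, because $\chi S$ is not concentrated on any short interval, a constant fraction of the ordered pairs $(x,x')$ lying in a common part $S_i$ satisfy $\|\chi x-\chi x'\|_p> r$, where $r:=8\sqrt{t/m}$. We then lower-bound $\psi(\chi)$ by counting such pairs, and show that this count is concentrated over the random partition $\mathcal{S}$.

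\textbf{Reduction to a pair count.} Call an ordered pair $(x,x')\in S^2$ \emph{far} if $\|\chi x-\chi x'\|_p>r$. Let $N=N(\mathcal{S})$ be the number of far ordered pairs $(x,x')$ with $x,x'$ in the same part $S_i$. Each far pair contributes more than $r^2=64t/m$ to the inner sum in $\psi$. Using $|S_i|\le \sqrt2|S|/m$, exactly as in \eqref{eq:ineq-for-psi}, we get
\[
\psi(\chi)\ \ge\ \frac{m^2}{2|S|^2}\cdot N\cdot \frac{64t}{m}\ =\ \frac{32\,t\,m}{|S|^2}\,N .
\]
So it suffices to show that $N\ge |S|^2/(16m)$ with probability at least $1-|S|^{-9}$, since then $\psi(\chi)\ge 2t$.

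\textbf{Expectation.} Fix $x\in S$. Taking $y=\chi x$ in the definition of $D_t$, the hypothesis $\chi\notin D_t$ says that fewer than $\frac34|S|$ elements $x'\in S$ satisfy $\|\chi x'-\chi x\|_p\le r$. Hence at least $\frac14|S|$ elements $x'$ make $(x,x')$ far. Conditional on $x\in S_i$, the other $|S_i|-1$ members of $S_i$ form a uniform random subset of $S\setminus\{x\}$. Each of them is far from $x$ with probability at least $(|S|/4-1)/(|S|-1)\ge 1/5$. Summing over $x$ gives
\[
\mathbb{E}_\mathcal{S}[N]\ \ge\ \tfrac15\sum_i|S_i|(|S_i|-1).
\]
Since every $|S_i|\ge \lfloor |S|/m\rfloor\ge 4$, this is at least roughly $\frac{3}{20}\cdot |S|^2/m$, which comfortably exceeds $|S|^2/(8m)$.

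\textbf{Concentration (main obstacle).} The partition $\mathcal{S}$ is the image of a uniformly random permutation of $S$ placed into fixed slots. Transposing two elements of $S$ changes $N$ by at most $4\max_i|S_i|\le 8|S|/m$. I would apply the standard McDiarmid/Azuma inequality for functions of a random permutation (via the Doob martingale that reveals the positions one at a time; each step has bounded differences of order $|S|/m$). This gives
\[
\mathbb{P}\big[N\le \mathbb{E}N-\lambda\big]\le \exp\!\big(-c\,\lambda^2 m^2/|S|^3\big).
\]
With $\lambda=|S|^2/(16m)$ the exponent is of order $c|S|$. This is far more than $9\log|S|$, since $|S|\ge 10^7$. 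Therefore $N\ge |S|^2/(16m)$ with probability at least $1-|S|^{-9}$, and the lemma follows.

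The delicate points are justifying the permutation concentration inequality with explicit constants and checking the lower-order terms. If the permutation version proves awkward, a fallback is to realise the partition by first assigning the elements of $S$ to parts independently and uniformly, and then correcting the part sizes. The constants leave ample slack: the argument yields roughly $\psi\ge 8t$, while only $2t$ is needed.
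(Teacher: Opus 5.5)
Your proof is correct, but it takes a different route from the paper's. The paper argues element by element. For each fixed $x'\in S$ it conditions on $i(x')$ and applies the hypergeometric Chernoff bound, which gives at least $|S|/(16m)$ far elements in $S_{i(x')}\setminus\{x'\}$ except with probability $e^{-|S|/(64m)}$. It then takes a union bound over the $|S|$ choices of $x'$, and the assumption $m\le 10^{-3}|S|/\log|S|$ is exactly what makes $|S|e^{-|S|/(64m)}\le |S|^{-9}$. You instead only require the aggregate number $N$ of far same-part pairs to be large, which you get from $\mathbb{E}_\mathcal{S}[N]$ together with bounded differences for a uniformly random permutation. The step you flagged as delicate is standard. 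The Doob martingale that reveals the permutation one position at a time, combined with the swap coupling, gives $\mathbb{P}[N\le \mathbb{E}N-\lambda]\le \exp(-2\lambda^2/(|S|c^2))$ whenever every transposition changes $N$ by at most $c$. With your $c=8|S|/m$ and $\lambda=|S|^2/(16m)$ the exponent is $|S|/8192$, which exceeds $9\log|S|$ for $|S|\ge 10^7$. In fact a swap changes $N$ by at most $4(\max_i|S_i|-1)\le 4|S|/m$, so there is extra room. Your route gives a failure probability $e^{-\Omega(|S|)}$ that does not depend on $m$, so for this lemma the upper bound on $m$ is not needed at all; only $m\le |S|/4$ and $|S|$ large are used. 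The paper's route uses only the textbook hypergeometric tail bound, and it proves the stronger local statement that every element has many far partners in its own part. That element-wise template is reused in Lemma~\ref{lem:est2}, where the contributions are weighted by $\|\chi x-y_\chi\|_p^2$ rather than thresholded, and there a global bounded-differences bound with worst-case swap sensitivity would be much weaker.
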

\begin{proof}
For an element $x'\in S$, let $i(x')\in \{1,\dots,m\}$ denote the index such that $x'\in S_{i(x')}$, i.e. the index of the part of the random partition $S=S_1\cup\dots\cup S_m$ containing $x'$. Since $\chi\not\in D_t$, for every fixed $x'\in S$ we have $|\{x\in S: \|\chi x-\chi x'\|_p> 8\sqrt{t/m}\}|\ge |S|/4$. Note that, when conditioning on $i(x')$ (i.e. exposing and conditioning on which part of the random partition the element $x'$ belongs to), the set $S_{i(x')}\setminus \{x'\}$ is a uniformly random subset of $S\setminus \{x'\}$ of size either $\lfloor |S|/m\rfloor$ (if $i(x')\le |S|-m \lfloor |S|/m\rfloor$) or $\lfloor |S|/m\rfloor-1$ (if $i(x')> |S|-m \lfloor |S|/m\rfloor$). Thus, conditioning on $i(x')$, the set $S_{i(x')}\setminus \{x'\}$ is a uniformly random subset of $S\setminus \{x'\}$ of some given size $k\ge |S|/(2m)$. Recalling that there are at least $|S|/4$ elements $x\in S\setminus \{x'\}$ with $\|\chi x-\chi x'\|_p> 8\sqrt{t/m}$, by the Chernoff bound for hypergeometric distributions (see \cite[Theorem 2.10 and Eq. (2.6)]{random-graphs-book}), with probability at least $1-e^{-k/32}\ge 1-e^{-|S|/(64m)}$ there are at least $k/8\ge |S|/(16m)$ different elements $x\in S_{i(x')}\setminus \{x'\}$ with $\|\chi x-\chi x'\|_p> 8\sqrt{t/m}$.

Thus, by a union bound, with probability at least $1-|S|e^{-|S|/(64m)}$ for all $x'\in S$ we have at least $|S|/(16m)$ different elements $x\in S_{i(x')}\setminus \{x'\}$ with $\|\chi x-\chi x'\|_p> 8\sqrt{t/m}$. Whenever this happens, by \eqref{eq:ineq-for-psi} we obtain
\[
\psi(\chi)\ge \frac{m^2}{2|S|^2} \sum_{i=1}^{m} \sum_{x,x'\in S_i} \|\chi x-\chi x'\|_p^2
        = \frac{m^2}{2|S|^2}   \sum_{x'\in S}  \sum_{x\in S_{i(x')}} \|\chi x-\chi x'\|_p^2 
        \ge \frac{m^2}{2|S|^2}  \cdot |S|\cdot  \frac{|S|}{16m} \cdot \big(8\sqrt{t/m}\big)^2=2t.
\]
Thus, we can conclude
\[\mathbb{P}_\mathcal{S}[\psi(\chi) < 2t]\le |S|\cdot e^{-|S|/(64m)}\le |S|\cdot e^{-10\log |S|}=\frac{1}{|S|^9},\]
recalling our assumption $m\le 10^{-3}|S|/\log |S|$.
\end{proof}

Our next goal is to show that any $\chi\in D_t\setminus B_{32t}$ is contained in $A_t$ with small probability. For any positive integer $t$, and any $\chi\in D_t$, let $y_\chi\in \mathbb{Z}_p$ be such that $|\{x\in S: \|\chi x-y_\chi\|_p\le 8\sqrt{t/m}\}|\ge \frac{3}{4}|S|$ holds, i.e. such that at least a $(3/4)$-fraction of the elements of $\chi S$ are contained in the ``interval'' in $\mathbb{Z}_p$ of length $16p\sqrt{t/m}$ around $y_\chi$. Note that we can choose $y_\chi$ to be independent of $t$ by considering the smallest positive integer $t$ with $\chi\in D_t$ (choosing a valid $y_\chi\in \mathbb{Z}_p$ for this smallest value of $t$ will automatically give a valid $y_\chi$ for all larger values of $t$). Now, let
\[J_{\chi,t}=\{x\in \mathbb{Z}_p : \|\chi x-y_\chi\|_p\le 16\sqrt{t/m}\}.\]

\begin{lem}\label{lem:est-Psi}
    For any positive integer $t$, and any $\chi \in D_t\setminus B_{2000t}$, we have 
    \[
        \sum_{x\in S\setminus J_{\chi,t}} \|\chi x-y_\chi\|_p^2 \ge \frac{200t}{m}\cdot |S|.
    \]
\end{lem}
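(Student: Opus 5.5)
The plan is to compare the pairwise quantity $\Psi(\chi)$ with the one-point quantity $\sum_{x\in S}\|\chi x-y_\chi\|_p^2$ using the triangle-type inequality of Fact \ref{fact3}. Then the contribution of the elements inside $J_{\chi,t}$ can be bounded trivially, and what is left must come from $S\setminus J_{\chi,t}$.

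\textbf{Step 1 (from pairs to a single centre).} Since $\chi\in D_t$, the element $y_\chi$ is defined. For any $x,x'\in S$ we write $\chi x-\chi x' = (\chi x-y_\chi)+(y_\chi-\chi x')$. Applying Fact \ref{fact3} with $k=2$, and using $\|{-w}\|_p=\|w\|_p$, gives
\[
\|\chi x-\chi x'\|_p^2\le 2\big(\|\chi x-y_\chi\|_p^2+\|\chi x'-y_\chi\|_p^2\big).
\]
Summing over all ordered pairs $(x,x')\in S\times S$ yields
\[
\sum_{x,x'\in S}\|\chi x-\chi x'\|_p^2\le 4|S|\sum_{x\in S}\|\chi x-y_\chi\|_p^2 .
\]

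\textbf{Step 2 (use $\chi\notin B_{2000t}$).} By definition of $B_{2000t}$ we have $\Psi(\chi)>2000t$. This means
\[
\sum_{x,x'\in S}\|\chi x-\chi x'\|_p^2>\frac{2000t|S|^2}{m}.
\]
Combining this with Step 1 gives $\sum_{x\in S}\|\chi x-y_\chi\|_p^2> 500t|S|/m$.

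\textbf{Step 3 (remove the contribution of $J_{\chi,t}$).} For $x\in S\cap J_{\chi,t}$ we have $\|\chi x-y_\chi\|_p^2\le (16\sqrt{t/m})^2=256t/m$, so these elements contribute at most $256t|S|/m$ in total. Therefore
\[
\sum_{x\in S\setminus J_{\chi,t}}\|\chi x-y_\chi\|_p^2> \frac{500t|S|}{m}-\frac{256t|S|}{m}=\frac{244t|S|}{m}\ge \frac{200t}{m}\cdot|S|,
\]
which is the claim of the lemma.

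There is no real obstacle here. The only point needing care is the constant bookkeeping: the factor $4$ lost in Step 1 must still leave room above the trivial bound $256t/m$ per element inside $J_{\chi,t}$. The threshold $2000t$ is exactly what makes this work, since $2000/4=500>256+200$. The hypothesis $\chi\in D_t$ (the $3/4$-concentration) is not otherwise needed for this lemma; it only guarantees that $y_\chi$ is defined.
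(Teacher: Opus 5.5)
Your proof is correct and follows essentially the same route as the paper: Fact \ref{fact3} with $k=2$ bounds $\Psi(\chi)/m$ by $\frac{4}{|S|}\sum_{x\in S}\|\chi x-y_\chi\|_p^2$, and then the trivial bound $256t/m$ per element of $S\cap J_{\chi,t}$ leaves $244t|S|/m\ge 200t|S|/m$. Your per-element constant $256$ is the right one; the paper's displayed $265t/m$ is a typo, as its subsequent $1024t/m$ shows.
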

\begin{proof}
    By Fact \ref{fact3} we have
    \[
        \|\chi x-\chi x'\|_p^2 \le 2 \|\chi x-y_\chi\|_p^2 + 2 \|\chi x'-y_\chi\|_p^2,%
    \]
    from which we obtain  (recalling that $\chi\not\in B_{2000t}$ and hence $\Psi(\chi)\ge 2000t$)
    \[\frac{2000t}{m}\le\frac{\Psi(\chi)}{m} = \frac{1}{|S|^2}\sum_{x,x'\in S} \|\chi x-\chi x'\|_p^2\le \frac{2}{|S|^2}\sum_{x,x'\in S} (\|\chi x-y_\chi\|_p^2 + \|\chi x'-y_\chi\|_p^2)
        = \frac{4}{|S|} \sum_{x\in S} \|\chi x-y_\chi\|_p^2.\]
        Noting that
        \[\sum_{x\in S} \|\chi x-y_\chi\|_p^2=\sum_{x\in S\setminus J_{\chi,t}} \|\chi x-y_\chi\|_p^2+\sum_{x\in S\cap J_{\chi,t}} \|\chi x-y_\chi\|_p^2\le \sum_{x\in S\setminus J_{\chi,t}} \|\chi x-y_\chi\|_p^2+|S\cap J_{\chi,t}|\cdot \frac{265t}{m},\]
        we can conclude that
        \[\frac{2000t}{m}\le\frac{4}{|S|}\sum_{x\in S\setminus J_{\chi,t}} \|\chi x-y_\chi\|_p^2+\frac{4|S\cap J_{\chi,t}|}{|S|}\cdot \frac{265t}{m}\le \frac{4}{|S|}\sum_{x\in S\setminus J_{\chi,t}} \|\chi x-y_\chi\|_p^2+\frac{1024t}{m}.\]
        Thus, we obtain
        \[\sum_{x\in S\setminus J_{\chi,t}} \|\chi x-y_\chi\|_p^2 \ge \frac{|S|}{4}\cdot \frac{976t}{m}\ge \frac{200t}{m}\cdot |S|.\qedhere\]
        \end{proof}

\begin{lem}\label{lem:est2}
    For any positive integer $t$, and any $\chi \in D_t\setminus B_{2000t}$, we have 
    \[
        \mathbb{P}_\mathcal{S}[\psi(\chi) < 2t] \le \frac{1}{|S|^9}. 
    \]
\end{lem}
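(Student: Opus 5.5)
Write $I:=S\cap J_{\chi,t}$ for the elements of $S$ that lie near $y_\chi$. The plan is to exploit the elements of $S$ that lie far from $y_\chi$. Since $\chi\in D_t$ and $J_{\chi,t}$ contains the $8\sqrt{t/m}$-interval around $y_\chi$, we have $|I|\ge \frac34|S|$. For every $x\in S\setminus J_{\chi,t}$ and every $x'\in I$, Fact \ref{fact3} gives $\|\chi x-y_\chi\|_p\le \|\chi x-\chi x'\|_p+16\sqrt{t/m}<2\|\chi x-\chi x'\|_p$, because $\|\chi x-y_\chi\|_p>16\sqrt{t/m}$. Hence $\|\chi x-\chi x'\|_p^2\ge \frac14\|\chi x-y_\chi\|_p^2$. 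Plugging this into \eqref{eq:ineq-for-psi} and keeping only ordered pairs with $x\notin J_{\chi,t}$ and $x'\in I$ in the same part, we obtain
\[\psi(\chi)\ge \frac{m^2}{8|S|^2}\sum_{x\in S\setminus J_{\chi,t}} \|\chi x-y_\chi\|_p^2\cdot N_x,\qquad N_x:=|S_{i(x)}\cap I\setminus\{x\}|.\]

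The key step is to show that, with probability at least $1-|S|^{-9}$, every $x\in S$ has $N_x\ge |S|/(8m)$. This is the same argument as in Lemma \ref{lem:est1}. Condition on $i(x)$; then $S_{i(x)}\setminus\{x\}$ is a uniformly random subset of $S\setminus\{x\}$ of some size $k\ge |S|/(2m)$. Its intersection with $I\setminus\{x\}$, a set of density at least roughly $3/4$, is hypergeometric with mean at least about $\frac34 k$. The Chernoff bound for hypergeometric distributions \cite[Theorem 2.10]{random-graphs-book} then gives $N_x\ge k/4\ge |S|/(8m)$ except with probability $e^{-ck}\le e^{-c|S|/(2m)}$. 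A union bound over $x\in S$, together with $m\le 10^{-3}|S|/\log|S|$, makes the total failure probability at most $|S|e^{-10\log|S|}=|S|^{-9}$, provided the constant $c$ is good enough. For instance, the lower-tail bound $\exp(-\delta^2\mu/2)$ with $\mu=\frac34k$ and $\delta=2/3$ gives $c=1/6$, which suffices.

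On this good event, Lemma \ref{lem:est-Psi} yields
\[\psi(\chi)\ge \frac{m^2}{8|S|^2}\cdot\frac{|S|}{8m}\cdot\frac{200t}{m}|S|=\frac{200t}{64}>2t.\]
This proves $\mathbb{P}_\mathcal{S}[\psi(\chi)<2t]\le |S|^{-9}$.

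The main obstacle is getting the constants to line up. One needs the factor-$2$ comparison between $\|\chi x-\chi x'\|_p$ and $\|\chi x-y_\chi\|_p$, which is exactly why $J_{\chi,t}$ uses radius $16$ rather than $8$. One also needs the hypergeometric tail to be strong enough, with the $k/4$ threshold, to beat the union bound. Both check out with comfortable slack: $200/64>2$, and $e^{-|S|/(12m)}\le |S|^{-10}$.
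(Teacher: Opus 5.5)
Your overall route is the paper's: condition on $i(x)$ for each $x\in S\setminus J_{\chi,t}$, apply the hypergeometric Chernoff bound, take a union bound, make a factor-$2$ comparison, then invoke Lemma~\ref{lem:est-Psi}. Your Chernoff constants and the final count $200t/64>2t$ are fine. However, the comparison step is false as written, because you took $I=S\cap J_{\chi,t}$, the elements within distance $16\sqrt{t/m}$ of $y_\chi$.

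From $\|\chi x-y_\chi\|_p\le\|\chi x-\chi x'\|_p+16\sqrt{t/m}$ and $\|\chi x-y_\chi\|_p>16\sqrt{t/m}$ you only get $\|\chi x-\chi x'\|_p>0$. Your second inequality, $\|\chi x-\chi x'\|_p+16\sqrt{t/m}<2\|\chi x-\chi x'\|_p$, would need $\|\chi x-\chi x'\|_p>16\sqrt{t/m}$, and nothing provides that. For instance, let $\chi x'$ be the last residue on one side of $y_\chi$ inside the radius-$16\sqrt{t/m}$ window, and let $\chi x=\chi x'+1$ be the first one outside. Then $x'\in I$ and $x\notin J_{\chi,t}$, yet $\|\chi x-\chi x'\|_p=1/p$. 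So your bound $\psi(\chi)\ge\frac{m^2}{8|S|^2}\sum_{x}\|\chi x-y_\chi\|_p^2N_x$ is unjustified. If many elements of $\chi S$ straddle the radius-$16$ boundary, the pairs you count may contribute almost nothing.

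The repair is to use the radius-$8$ set $I:=\{x'\in S:\|\chi x'-y_\chi\|_p\le 8\sqrt{t/m}\}$. It has $|I|\ge\frac34|S|$ directly by the choice of $y_\chi$. For $x\in S\setminus J_{\chi,t}$ and $x'\in I$ you then have $\|\chi x'-y_\chi\|_p\le 8\sqrt{t/m}<\frac12\|\chi x-y_\chi\|_p$. The unsquared triangle inequality for $\|\cdot\|_p$ then gives $\|\chi x-\chi x'\|_p\ge\frac12\|\chi x-y_\chi\|_p$; note this inequality is the first display in the proof of Fact~\ref{fact1}, not Fact~\ref{fact3}, which is only the squared version. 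This is exactly how the paper uses the gap between the radii $8$ (in $D_t$) and $16$ (in $J_{\chi,t}$). Your closing remark names this mechanism, but your definition of $I$ throws it away.

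With this change everything else you wrote goes through. Since $x\notin I$, the hypergeometric mean is at least $\frac34 k$, and the threshold $k/4$ gives failure probability $e^{-k/6}$. Your proof then coincides with the paper's up to constants: the paper uses threshold $k/2$ with failure probability $e^{-k/24}$ and obtains $\psi(\chi)\ge 5t$.
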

\begin{proof}
    For an element $x\in S$, let us again write $i(x)\in\{1,\dots,m\}$ for the index such that $x\in S_{i(x)}$. Recall that $|\{x'\in S: \|\chi x'-y_\chi\|_p\le 8\sqrt{t/m}\}|\ge \frac{3}{4}|S|$, so at least $3|S|/4$ elements $x'\in S$ satisfy $\|\chi x'-y_\chi\|_p\le 8\sqrt{t/m}$.

    Now, for any fixed $x\in S\setminus J_{\chi,t}$ (i.e. any $x\in S$ with $\|\chi x-y_\chi\|_p> 16\sqrt{t/m}$), when conditioning on $i(x)$, the set $S_{i(x)}\setminus \{x\}$ is a uniformly random subset of $S\setminus \{x\}$ of size either $\lfloor |S|/m\rfloor$ (if $i(x)\le |S|-m \lfloor |S|/m\rfloor$) or $\lfloor |S|/m\rfloor-1$ (if $i(x)> |S|-m \lfloor |S|/m\rfloor$). Thus, conditioning on $i(x)$, the set $S_{i(x)}\setminus \{x\}$ is a uniformly random subset of $S\setminus \{x\}$ of some given size $k\ge |S|/(2m)$. Recalling that there are at least $3|S|/4$ elements $x'\in S\setminus \{x\}$ with $\|\chi x'-y_\chi\|_p\le 8\sqrt{t/m}$, by the Chernoff bound for hypergeometric distributions (see \cite[Theorem 2.10 and Eq. (2.6)]{random-graphs-book}), with probability at least $1-e^{-k/24}\ge 1-e^{-|S|/(48m)}$ there are at least $k/2\ge |S|/(4m)$ different elements $x'\in S_{i(x)}\setminus \{x\}$ with $\|\chi x'-y_\chi\|_p\le 8\sqrt{t/m}\le \|\chi x-y_\chi\|_p/2$. These elements $x'\in S_{i(x)}\setminus \{x\}$ then satisfy $\|\chi x-\chi x'\|_p\ge \|\chi x-y_\chi\|_p-\|\chi x'-y_\chi\|_p\ge \|\chi x-y_\chi\|_p/2$

    Thus, by a union bound, with probability at least $1-|S|e^{-|S|/(48m)}$ for all $x\in S\setminus J_{\chi,t}$ we have at least $|S|/(4m)$ different elements $x'\in S_{i(x)}\setminus \{x\}$ with $\|\chi x-\chi x'\|_p\ge  \|\chi x-y_\chi\|_p/2$.  Whenever this happens, by \eqref{eq:ineq-for-psi} we obtain
\begin{align*}
\psi(\chi)&\ge \frac{m^2}{2|S|^2} \sum_{i=1}^{m} \sum_{x,x'\in S_i} \|\chi x-\chi x'\|_p^2\\
        &= \frac{m^2}{2|S|^2}  \cdot \sum_{x\in S}  \sum_{x'\in S_{i(x)}} \|\chi x-\chi x'\|_p^2 \\
        &\ge \frac{m^2}{2|S|^2}  \cdot \sum_{x\in S\setminus J_{\chi,t}}  \sum_{x'\in S_{i(x)}} \|\chi x-\chi x'\|_p^2\\
        &\ge \frac{m^2}{2|S|^2}  \cdot \sum_{x\in S\setminus J_{\chi,t}}  \frac{|S|}{4m}\cdot (\|\chi x-y_\chi\|_p/2)^2\\
        &= \frac{m}{32|S|}  \cdot \sum_{x\in S\setminus J_{\chi,t}}  \|\chi x-y_\chi\|_p^2\ge \frac{m}{32|S|}  \cdot\frac{200t}{m}\cdot |S|\ge 5t,
\end{align*}
where for the penultimate inequality we used Lemma \ref{lem:est-Psi}. Thus, we can conclude
\[\mathbb{P}_\mathcal{S}[\psi(\chi) < 2t]\le \mathbb{P}_\mathcal{S}[\psi(\chi) < 5t]\le |S|\cdot e^{-|S|/(48m)}\le |S|\cdot e^{-10\log |S|}=\frac{1}{|S|^9},\]
recalling our assumption $m\le 10^{-3}|S|/\log |S|$.
\end{proof}

Our final ingredient for the proof of Theorem \ref{thm:anticonc} is the following upper bound for $|B_t|$. 
\begin{lem}\label{lem:tildeC}
    For any positive integer $t\le m/2000$, we have
    \[
        |B_t|\le 1+\frac{200p\sqrt{t}}{|S|\sqrt{m}}.
    \]
\end{lem}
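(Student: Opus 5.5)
The plan is a Nguyen--Vu style argument in three steps. First, $B_t$ is approximately closed under addition, so a Cauchy--Davenport lower bound controls $|B_t|$ by the size of a larger set $B_T$. Second, $|B_T|$ is bounded by Parseval. Third, the parameters are matched.

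\emph{Step 1 (approximate closure).} If $\chi_1,\dots,\chi_k\in B_t$, then for all $x,x'\in S$ Fact \ref{fact3} gives
\[\|(\chi_1+\dots+\chi_k)(x-x')\|_p^2\le k\sum_{j=1}^k\|\chi_j x-\chi_j x'\|_p^2.\]
Summing over $x,x'\in S$ and multiplying by $m/|S|^2$ yields $\Psi(\chi_1+\dots+\chi_k)\le k\cdot kt=k^2t$. Hence the $k$-fold sumset satisfies $kB_t\su B_{k^2t}$. Note also that $0\in B_t$, so $B_t$ is nonempty.

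\emph{Step 2 (Parseval bound).} Set $T=m/40$, and let $\chi\in B_T$. By the lower bound in Fact \ref{fact2},
\[\Big|\sum_{x\in S}e_p(\chi x)\Big|^2=\sum_{x,x'\in S}\cos\big(2\pi(\chi x-\chi x')/p\big)\ge |S|^2-20\sum_{x,x'\in S}\|\chi x-\chi x'\|_p^2 .\]
The right-hand side equals $|S|^2\big(1-20\Psi(\chi)/m\big)\ge |S|^2/2$. On the other hand, Parseval gives
\[\sum_{\chi\in\mathbb{Z}_p}\Big|\sum_{x\in S}e_p(\chi x)\Big|^2=p|S|.\]
Combining the two, $|B_{m/40}|\cdot |S|^2/2\le p|S|$, that is, $|B_{m/40}|\le 2p/|S|<p$, using $|S|\ge 10^7$. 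In particular $B_{m/40}\ne\mathbb{Z}_p$.

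\emph{Step 3 (matching parameters).} Take $k=\lfloor\sqrt{m/(40t)}\rfloor$. Since $t\le m/2000$, we have $\sqrt{m/(40t)}\ge\sqrt{50}>1$, so $k\ge 1$ and moreover $k\ge \tfrac12\sqrt{m/(40t)}$. Then $k^2t\le m/40$, so $kB_t\su B_{m/40}\ne\mathbb{Z}_p$. Fact \ref{fact:cauchy-davenport} now gives
\[1+k(|B_t|-1)\le |kB_t|\le |B_{m/40}|\le 2p/|S|.\]
Rearranging,
\[|B_t|\le 1+\frac{2p}{k|S|}\le 1+\frac{4\sqrt{40}\,p\sqrt t}{|S|\sqrt m}\le 1+\frac{200p\sqrt t}{|S|\sqrt m}.\]

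The step I expect to need the most care is Step 2. The naive averaging bound (the average of $\Psi$ is about $m/12$) only shows $B_{m/100}\ne\mathbb{Z}_p$. That would give $|B_t|\lesssim 1+p\sqrt{t/m}$, which misses the crucial factor $1/|S|$. The Parseval argument supplies this factor, provided $T$ is a small enough constant multiple of $m$ that $\big|\sum_{x\in S} e_p(\chi x)\big|^2\ge |S|^2/2$. The threshold $t\le m/2000$ then leaves enough room for $k\ge1$ with absolute constants under $200$.
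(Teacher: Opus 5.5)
Your proof is correct, and it takes a genuinely different and shorter route than the paper.

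\textbf{The paper's route.} The paper argues on the dual side. It introduces $Q_{t,\delta}=\{x\in\mathbb{Z}_p:\sum_{\chi\in B_t}\|\chi x\|_p^2<\delta|B_t|\}$ and proceeds in three steps.
\begin{itemize}
\item By Markov's inequality, $Q_{t,10t/m}$ contains a translate of $\frac{9}{10}$ of $S$.
\item The identity $\sum_{x}\big(\sum_{\chi\in B_t}e_p(\chi x)\big)^2=p|B_t|$ gives $|Q_{t,1/200}|\le\frac54\,p/|B_t|$.
\item Using $kQ_{t,\delta}\subseteq Q_{t,k^2\delta}$, Cauchy--Davenport on the $k$-fold sumset of $Q_{t,10t/m}$ in physical space yields $\frac45 k|S|\le\frac54\,p/|B_t|$.
\end{itemize}

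\textbf{Your route.} You apply Cauchy--Davenport directly to $kB_t$ in frequency space. You control the container $B_{m/40}$ by observing that it lies in the large spectrum $\{\chi:|\sum_{x\in S}e_p(\chi x)|^2\ge |S|^2/2\}$, and Parseval caps this set at $2p/|S|$. The steps check out:
\begin{itemize}
\item Fact~\ref{fact3} gives $\Psi(\chi_1+\dots+\chi_k)\le k^2t$.
\item The lower bound in Fact~\ref{fact2} gives $|\sum_{x\in S}e_p(\chi x)|^2\ge |S|^2(1-20\Psi(\chi)/m)$.
\item $k=\lfloor\sqrt{m/(40t)}\rfloor$ ensures both $k^2t\le m/40$ and $1/k\le 2\sqrt{40t/m}$.
\end{itemize}

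\textbf{Comparison.} Your argument dispenses with the three auxiliary lemmas on $Q_{t,\delta}$. It needs only $t\le m/40$ and improves the constant from $200$ to $4\sqrt{40}<26$. It also produces the bound directly in the form $|B_t|-1\le 2p/(k|S|)$, which is exactly how the lemma is used later, as a bound on $|B_{2000t}\setminus\{0\}|$. The paper's duality argument instead gives a purely multiplicative bound $|B_t|\le\frac{25}{16}\,p/(k|S|)$ once $|B_t|\ge 2$, following the inverse Littlewood--Offord template of Nguyen and Vu. For the purposes of this paper the two bounds are interchangeable.
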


We will need some auxiliary lemmas for the proof of Lemma \ref{lem:tildeC}. For $\delta>0$, define 
\[
    Q_{t,\delta} = \Bigg\{x \in \mathbb{Z}_p:  \sum_{\chi \in B_t} \|\chi x\|_p^2 < \delta |B_t|\Bigg\}.
\]

\begin{lem}\label{lem:lower-bound-Q}
    For every positive integer $t$, we have
    \[|Q_{t, 10t/m}|\ge \frac{9}{10}|S|.\]
\end{lem}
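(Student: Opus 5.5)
We will show $|Q_{t,10t/m}|\ge \frac{9}{10}|S|$ by exhibiting a large subset of it: the set $\{x-x' : x,x'\in S\}$ counted with multiplicity, or more concretely, a single translate $S-x_0$ for a suitable $x_0\in S$. The plan is a double-counting argument starting from the definition of $B_t$. For every $\chi\in B_t$ we have $\Psi(\chi)\le t$, that is,
\[
\frac{1}{|S|^2}\sum_{x,x'\in S}\|\chi x-\chi x'\|_p^2\le \frac{t}{m}.
\]
Summing over $\chi\in B_t$ and exchanging the order of summation gives
\[
\frac{1}{|S|^2}\sum_{x,x'\in S}\ \sum_{\chi\in B_t}\|\chi(x-x')\|_p^2\le \frac{t}{m}|B_t|.
\]
Here we use $\chi x-\chi x'=\chi(x-x')$ in $\mathbb{Z}_p$. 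Writing $F(u)=\sum_{\chi\in B_t}\|\chi u\|_p^2$, this says that the average of $F(x-x')$ over pairs $(x,x')\in S^2$ is at most $\frac{t}{m}|B_t|$.

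Next we choose $x'$ by averaging. Since the average over $x'$ of $\frac{1}{|S|}\sum_{x\in S}F(x-x')$ is at most $\frac{t}{m}|B_t|$, there is some $x_0\in S$ with
\[
\sum_{x\in S}F(x-x_0)\le \frac{t}{m}|B_t|\cdot|S|.
\]
By Markov's inequality, the number of $x\in S$ with $F(x-x_0)\ge \frac{10t}{m}|B_t|$ is at most $|S|/10$. For every other $x\in S$ we have $F(x-x_0)<\frac{10t}{m}|B_t|$, which means $x-x_0\in Q_{t,10t/m}$.

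Finally, the map $x\mapsto x-x_0$ is injective. This yields at least $\frac{9}{10}|S|$ distinct elements of $Q_{t,10t/m}$, which completes the argument.

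The only point needing care is that the strict inequality in the definition of $Q$ matches the Markov step, which it does as stated. No real obstacle is expected: the argument is simply an averaging over the pair $(x,x')$ followed by a pigeonhole choice of $x_0$.
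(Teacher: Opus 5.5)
Your proof is correct and is essentially the paper's argument: both sum $\Psi(\chi)/m\le t/m$ over $\chi\in B_t$, use $\chi x-\chi x'=\chi(x-x')$, apply Markov's inequality at threshold $\frac{10t}{m}|B_t|$, fix one element $x_0\in S$, and use injectivity of $x\mapsto x-x_0$. The only difference is ordering: the paper applies Markov to the random pair $(Y,Y')$ and then fixes $y'$, whereas you fix $x_0$ by averaging first and then apply Markov.
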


\begin{proof}
Let $Y,Y'\in S$ be two independent uniformly random elements of $S$. Note that then for any $\chi\in\mathbb{Z}_p$, we have
\[\mathbb{E}_{Y,Y'\in S}\Big[\|\chi Y-\chi Y'\|_p^2\Big]= \frac{1}{|S|^2}\sum_{x,x'\in S} \|\chi x-\chi x'\|_p^2=\frac{\Psi(\chi)}{m}.\]
Thus, we obtain
\[\mathbb{E}_{Y,Y'\in S}\Bigg[\sum_{\chi \in B_t}\|\chi Y-\chi Y'\|_p^2\Bigg]=\sum_{\chi \in B_t} \frac{\Psi(\chi)}{m}\le |B_t|\cdot \frac{t}{m}.\]
Therefore, by Markov's inequality, we can conclude
\[\mathbb{P}_{Y,Y'\in S}\Big[Y-Y' \not\in Q_{t, 10t/m}\Big]=\mathbb{P}_{Y,Y'\in S}\Bigg[\sum_{\chi \in B_t}\|\chi Y-\chi Y'\|_p^2\ge\frac{10t}{m}|B_t|\Bigg]\le \frac{|B_t|\cdot \frac{t}{m}}{\frac{10t}{m}|B_t|}=\frac{1}{10}.\]

Thus, with probability at least $9/10$, we have $Y-Y'\in Q_{t, 10t/m}$. In particular, this means that there exists some fixed $y'\in S$ such that for uniformly random $Y\in S$ with probability at least $9/10$, we have $Y-y'\in Q_{t, 10t/m}$. This means that there are at least $(9/10)|S|$ elements $y\in S$ with $y-y'\in Q_{t, 10t/m}$. Noting that the resulting elements $y-y'\in Q_{t, 10t/m}$ are distinct for all these $y\in S$, we can conclude that $|Q_{t, 10t/m}|\ge (9/10)|S|$.
\end{proof}

\begin{lem}\label{lem:dual-est}
    For every positive integer $t$, we have 
    \[
        |Q_{t,1/200}| \le \frac{5}{4}\cdot \frac{p}{|B_t|}. 
    \]
\end{lem}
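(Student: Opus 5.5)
Our plan is a standard Fourier double-counting (duality) argument. We sum the quantity $\operatorname{Re}\, e_p(\chi x)$ over all pairs $(x,\chi)\in \mathbb{Z}_p\times B_t$ in two ways: for $x\in Q_{t,1/200}$ we get a lower bound, and over all $x\in\mathbb{Z}_p$ we get the exact value.

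\emph{Lower bound.} For $x\in Q_{t,1/200}$ we have $\sum_{\chi\in B_t}\|\chi x\|_p^2<|B_t|/200$. The lower bound in Fact \ref{fact2} (applied to a representative of $\chi x$) gives $\operatorname{Re}\,e_p(\chi x)\ge 1-20\|\chi x\|_p^2$. Hence
\[
\operatorname{Re}\sum_{\chi\in B_t} e_p(\chi x)\ \ge\ |B_t|-20\cdot\frac{|B_t|}{200}=\frac{9}{10}|B_t|.
\]

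\emph{Nonnegativity.} We need every $x\in\mathbb{Z}_p$ to contribute a nonnegative amount, so that we may discard the $x\notin Q_{t,1/200}$. The function $\sum_{\chi\in B_t}e_p(\chi x)$ need not have nonnegative real part, so we instead use the squared modulus
\[
F(x):=\Big|\sum_{\chi\in B_t}e_p(\chi x)\Big|^2=\sum_{\chi,\chi'\in B_t}e_p((\chi-\chi')x),
\]
which is nonnegative. For $x\in Q_{t,1/200}$ the lower bound above gives $F(x)\ge \big(\operatorname{Re}\sum_{\chi\in B_t} e_p(\chi x)\big)^2\ge \frac{81}{100}|B_t|^2$.

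\emph{Exact sum over $\mathbb{Z}_p$.} By orthogonality, $\sum_{x\in\mathbb{Z}_p}e_p((\chi-\chi')x)$ equals $p$ if $\chi=\chi'$ and $0$ otherwise, so
\[
\sum_{x\in\mathbb{Z}_p}F(x)=p|B_t|.
\]

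\emph{Conclusion.} Dropping the nonnegative terms with $x\notin Q_{t,1/200}$,
\[
|Q_{t,1/200}|\cdot\frac{81}{100}|B_t|^2\ \le\ \sum_{x\in Q_{t,1/200}}F(x)\ \le\ \sum_{x\in\mathbb{Z}_p}F(x)\ =\ p|B_t|,
\]
which gives $|Q_{t,1/200}|\le \frac{100}{81}\cdot\frac{p}{|B_t|}\le\frac54\cdot\frac{p}{|B_t|}$, since $100/81<5/4$.

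The only real obstacle was the nonnegativity step, which the squaring handles. Note that this argument does not need $B_t$ to be symmetric; it is in fact symmetric, since $\Psi(-\chi)=\Psi(\chi)$, but we never use this.
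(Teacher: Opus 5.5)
Your proof is correct and matches the paper's argument: the same identity $\sum_{x\in\mathbb{Z}_p}(\cdot)^2=p|B_t|$, the same pointwise lower bound $\tfrac{9}{10}|B_t|$ on $Q_{t,1/200}$ from Fact~\ref{fact2}, and the same step of discarding the nonnegative terms with $x\notin Q_{t,1/200}$. The only difference is that you work with $\bigl|\sum_{\chi\in B_t}e_p(\chi x)\bigr|^2$, whereas the paper uses the symmetry $B_t=-B_t$ to make $\sum_{\chi\in B_t}e_p(\chi x)$ real and then squares it; your version avoids needing that symmetry, which is a slight simplification.
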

\begin{proof}
Note that $\Psi(0)=0$ and $\Psi(\chi)=\Psi(-\chi)$ for all $\chi\in \mathbb{Z}_p$. Therefore we have $0\in B_t$, and $-\chi\in B_t$ for all $\chi\in B_t$. Hence, we obtain
\[\sum_{x\in \mathbb{Z}_p} \Bigg(\sum_{\chi\in B_t} e_p(\chi x)\Bigg)^2=\sum_{x\in \mathbb{Z}_p}\Bigg(\sum_{\chi\in B_t} e_p(\chi x)\Bigg)\cdot \Bigg(\sum_{\chi'\in B_t} e_p(\chi x)\Bigg)=\sum_{\chi,\chi'\in B_t} \sum_{x\in \mathbb{Z}_p} e_p((\chi+\chi') x)=|B_t|\cdot p.\]

On the other hand, by Fact \ref{fact2}, for any $z\in \mathbb{Z}$, we have
\[\frac{e_p(z)+e_p(-z)}{2}=\frac{\exp(2\pi iz/p)+\exp(-2\pi iz/p)}{2}=\cos(2\pi z/p)\ge 1 - 20 \|z/p\|_\mathbb{Z}^2=1 - 20 \|z\|_p^2.\]
Therefore, for any $x\in Q_{t,1/200}$, we have
\[
    \sum_{\chi\in B_t} e_p(\chi x)=\sum_{\chi\in B_t} \frac{e_p(\chi x)+e_p(-\chi x)}{2} \ge \sum_{\chi\in B_t}(1 - 20 \|\chi x\|_p^2)= |B_t| - 20 \sum_{\chi\in B_t}\|\chi x\|_p^2\ge |B_t| - 20 \cdot \frac{|B_t|}{200},
\]
and hence
\[
    \Bigg(\sum_{\chi\in B_t} e_p(\chi x)\Bigg)^2 \ge \Bigg( |B_t| - 20 \cdot \frac{|B_t|}{200}\Bigg)^2= \Bigg(\frac{9}{10}|B_t| \Bigg)^2= \frac{81}{100}|B_t|^2\ge \frac{4}{5}|B_t|^2. 
\]

Thus, we can conclude
\[p\cdot |B_t|=\sum_{x\in \mathbb{Z}_p} \Bigg(\sum_{\chi\in B_t} e_p(\chi x)\Bigg)^2\ge \sum_{x\in Q_{t,1/200}} \Bigg(\sum_{\chi\in B_t} e_p(\chi x)\Bigg)^2\ge |Q_{t,1/200}|\cdot \frac{4}{5}|B_t|^2.\]
This yields
\[
    |Q_{t,1/200}| \le \frac{5}{4}\cdot \frac{p}{|B_t|}. \qedhere
\]
\end{proof}

\begin{lem}\label{lem:sumset}
    Let $t$ be a positive integer, and let $\delta>0$. For some positive integer $k$, we consider the $k$-fold sum $kQ_{t,\delta}=Q_{t,\delta}+\dots+Q_{t,\delta}=\{x_1+\dots+x_k: x_1,\dots,x_k\in Q_{t,\delta}\}$. Then we have $kQ_{t,\delta} \subseteq Q_{t, k^2 \delta}$.
\end{lem}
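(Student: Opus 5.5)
The inclusion $kQ_{t,\delta}\subseteq Q_{t,k^2\delta}$ follows directly from Fact \ref{fact3}, applied separately to each character $\chi\in B_t$.

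Take an arbitrary element of $kQ_{t,\delta}$ and write it as $x=x_1+\dots+x_k$ with $x_1,\dots,x_k\in Q_{t,\delta}$. Fix $\chi\in B_t$. Since $\chi x=\chi x_1+\dots+\chi x_k$ in $\mathbb{Z}_p$, Fact \ref{fact3} applied to the elements $\chi x_1,\dots,\chi x_k\in\mathbb{Z}_p$ gives
\[
\|\chi x\|_p^2\le k\cdot\big(\|\chi x_1\|_p^2+\dots+\|\chi x_k\|_p^2\big).
\]

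Next, sum this inequality over all $\chi\in B_t$ and interchange the order of summation:
\[
\sum_{\chi\in B_t}\|\chi x\|_p^2\le k\sum_{j=1}^{k}\sum_{\chi\in B_t}\|\chi x_j\|_p^2.
\]
Each $x_j$ lies in $Q_{t,\delta}$, so by definition every inner sum is strictly less than $\delta|B_t|$. The right-hand side is therefore strictly less than $k\cdot k\cdot\delta|B_t|=k^2\delta|B_t|$. This is exactly the defining condition for $x\in Q_{t,k^2\delta}$, which proves the inclusion.

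The main point needing care is that Fact \ref{fact3} applies here. It needs $\chi x=\sum_j\chi x_j$ as elements of $\mathbb{Z}_p$, and this holds because multiplication by $\chi$ is additive on $\mathbb{Z}_p$. The quantity $\|\cdot\|_p$ is well defined on residue classes, as set up in Section \ref{sec:prelim}. Finally, the strict inequality is preserved because $k\ge 1$ and each of the $k$ inner sums is strictly below $\delta|B_t|$.
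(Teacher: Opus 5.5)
Your proposal is correct and matches the paper's proof: apply Fact~\ref{fact3} to $\chi x_1,\dots,\chi x_k$ for each $\chi\in B_t$, sum over $B_t$, and use the defining bound $\sum_{\chi\in B_t}\|\chi x_j\|_p^2<\delta|B_t|$ for each $x_j$. You also keep the inequality strict, which is what membership in $Q_{t,k^2\delta}$ requires; this is slightly more careful than the paper's displayed chain, which writes only $\le$.
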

\begin{proof}
    For $x_1,\dots,x_k\in Q_{t,\delta}$, by Fact \ref{fact3} we have
        \[
        \|\chi (x_1+\dots+x_k)\|_p^2 =\|\chi x_1+\dots+\chi x_k\|_p^2\le k\cdot (\|\chi x_1\|_p^2+\dots+\|\chi x_k\|_p^2)
    \]
    for any $\chi\in \mathbb{Z}_p$ and hence
    \[\sum_{\chi\in B_t}\|\chi (x_1+\dots+x_k)\|_p^2\le k\sum_{\chi\in B_t}\|\chi x_1\|_p^2+\dots+k\sum_{\chi\in B_t}\|\chi x_k\|_p^2\le k\cdot k\cdot \delta|B_t|=k^2 \delta|B_t|.\]
    This shows that $x_1+\dots+x_k\in Q_{t, k^2 \delta}$ for all $x_1,\dots,x_k\in Q_{t,\delta}$.
\end{proof}

We are now ready to prove Lemma \ref{lem:tildeC}.
\begin{proof}[Proof of Lemma \ref{lem:tildeC}]
We may assume $|B_t|\ge 2$, since otherwise the desired inequality trivially holds. Note that then Lemma \ref{lem:dual-est} in particular implies $|Q_{t, 1/200}|\le (5/4)\cdot (p/2)=(5/8)\cdot p<p$.%

Now, let $k=\lfloor \sqrt{m/(2000t)}\rfloor$. Recalling our assumption that $t\le m/2000$, we can see that $\sqrt{m/(2000t)}\ge 1$ and $k=\lfloor \sqrt{m/(2000t)}\rfloor\ge \frac{1}{2}\sqrt{m/(2000t)}\ge 10^{-2}\sqrt{m/t}$, and furthermore also $k=\lfloor \sqrt{m/(2000t)}\rfloor\ge 1$. We have $k^2(10t/m)\le 1/200$, and hence by Lemma \ref{lem:sumset}, the $k$-fold sumset $kQ_{t,10t/m}$ of $Q_{t,10t/m}$ satisfies
\[kQ_{t,10t/m}\su Q_{t,k^2(10t/m)}\su Q_{t,1/200}.\]
In particular, we obtain $|kQ_{t,10t/m}|\le |Q_{t,1/200}|<p$, and hence $kQ_{t,10t/m}\ne \mathbb{Z}_p$.

Now, combining Fact \ref{fact:cauchy-davenport} and Lemma \ref{lem:lower-bound-Q} yields
\[|kQ_{t,10t/m}|\ge 1+k\cdot (|Q_{t,10t/m}|-1)\ge 1+k\cdot \Big(\frac{9}{10}|S|-1\Big)\ge k\cdot\frac{4}{5}|S|=\frac{4}{5}\cdot k\cdot |S|\]
(recalling that $|S|\ge 10^{7}$). Thus, we can conclude
\[\frac{4}{5}\cdot k\cdot |S|\le |kQ_{t,10t/m}|\le |Q_{t,1/200}|\le  \frac{5}{4}\cdot \frac{p}{|B_t|},\]
where in the last step we used Lemma \ref{lem:dual-est}. Thus, we obtain
\[|B_t|\le \frac{25}{16}\cdot \frac{p}{k\cdot |S|}\le 2\cdot \frac{p}{10^{-2}\sqrt{m/t} \cdot |S|} =\frac{200 p\sqrt{t}}{|S|\sqrt{m}}\le 1+\frac{200 p\sqrt{t}}{|S|\sqrt{m}}.\qedhere\]
\end{proof}

Finally, we give the proof of Theorem \ref{thm:anticonc}.
\begin{proof}[Proof of Theorem \ref{thm:anticonc}]
Let $z\in \mathbb{Z}_p$. Recall from (\ref{eq:prob}) that
\[\mathbb{P}[\Sigma(R)=z] \le \frac{1}{p} \mathbb{E}_\mathcal{S}[|A_0|]+\frac{1}{p}\sum_{\ell= 0}^{\lfloor \log_2 m\rfloor} \mathbb{E}_\mathcal{S}[|A_{2^{\ell}}|] \cdot \exp(-2^{\ell}).\]
For every positive integer $t$, by Lemmas \ref{lem:est1} and \ref{lem:est2}, we have
\[\mathbb{E}_\mathcal{S}\Big[|\{\chi\in \mathbb{Z}_p\setminus\{0\}: \psi(\chi)<2t\}|\Big]\le \frac{|(\mathbb{Z}_p\setminus\{0\})\setminus D_t|}{|S|^9}+\frac{|D_t\setminus B_{2000t}|}{|S|^9}+|B_{2000t}\setminus\{0\}|\le \frac{p}{|S|^9}+|B_{2000t}\setminus\{0\}|.\]
Since $0\in B_{2000t}$ for all positive integers $t$, we have $|B_{2000t}\setminus\{0\}|=|B_{2000t}|-1$, and so for any positive integer $t\le m/2000^2$ we can conclude
\[\mathbb{E}_\mathcal{S}\Big[|\{\chi\in \mathbb{Z}_p\setminus\{0\}: \psi(\chi)<2t\}|\Big]\le  \frac{p}{|S|^9}+|B_{2000t}|-1\le \frac{p}{|S|^9}+\frac{200p\sqrt{2000t}}{|S|\sqrt{m}}\le \frac{10^4p\sqrt{t}}{|S|\sqrt{m}},\]
where the penultimate step is by Lemma \ref{lem:tildeC} (and in the last step we used $\sqrt{m}\le m\le |S|$).

Recall that for $\chi=0\in \mathbb{Z}_p$ we always have $\psi(\chi)=0$, and hence $\chi\in A_0$ (and $\chi\not\in A_t$ for all positive integers $t$). Thus, we can conclude (recalling that $m\ge 10^7$)
\[\mathbb{E}_\mathcal{S}[|A_0|]\le 1+\mathbb{E}_\mathcal{S}\Big[|\{\chi\in \mathbb{Z}_p\setminus\{0\}: \psi(\chi)<2\}|\Big]\le 1+\frac{10^4p}{|S|\sqrt{m}}\]
and
\[\mathbb{E}_\mathcal{S}[|A_t|]=\mathbb{E}_\mathcal{S}\Big[|\{\chi\in \mathbb{Z}_p\setminus\{0\}: t\le \psi(\chi)<2t\}|\Big]\le \mathbb{E}_\mathcal{S}\Big[|\{\chi\in \mathbb{Z}_p\setminus\{0\}: \psi(\chi)<2t\}|\Big]\le \frac{10^4p\sqrt{t}}{|S|\sqrt{m}}\]
for any positive integer $t\le m/2^{22}\le m/2000^2$. Finally, note that for any positive integer $t$, we also have the trivial bound $\mathbb{E}_\mathcal{S}[|A_t|]\le p$.

Plugging these bounds in, we obtain
\begin{align*}
\mathbb{P}[\Sigma(R)=z] &\le \frac{1}{p} \mathbb{E}_\mathcal{S}[|A_0|]+\frac{1}{p}\sum_{\ell= 0}^{\lfloor \log_2 m\rfloor} \mathbb{E}_\mathcal{S}[|A_{2^{\ell}}|] \cdot \exp(-2^{\ell})\\
&\le \frac{1}{p}\Bigg(1+\frac{10^4p}{|S|\sqrt{m}}\Bigg)+\frac{1}{p}\sum_{\ell= 0}^{\lfloor \log_2 m\rfloor-22}\frac{10^4p2^{\ell/2}}{|S|\sqrt{m}}\cdot \exp(-2^{\ell})+\frac{1}{p}\sum_{\ell= \lfloor \log_2 m\rfloor-21}^{\lfloor \log_2 m\rfloor}p\cdot \exp(-2^{\ell})\\
&\le \frac{1}{p}+\frac{10^4}{|S|\sqrt{m}}+\frac{10^4}{|S|\sqrt{m}}\sum_{\ell= 0}^{\infty}\exp(\ell/2-2^{\ell})+22\cdot \exp(-m/2^{22})\\
&\le \frac{1}{p}+\frac{10^4}{|S|\sqrt{m}}+\frac{10^4}{|S|\sqrt{m}}\cdot 2+\frac{22}{|S|^4}\le \frac{1}{p}+\frac{30022}{|S|\sqrt{m}}\le \frac{1}{p}+\frac{C}{|S|\sqrt{m}}.
\end{align*}
Here, for the inequality at the start of the last line we used $\sum_{\ell= 0}^{\infty}\exp(\ell/2-2^{\ell})\le \sum_{\ell= 0}^{\infty}\exp(\ell/2-2\ell)\le \sum_{\ell= 0}^{\infty}\exp(-\ell)\le \sum_{\ell= 0}^{\infty}2^{-\ell}=2$, as well as our assumption $m\ge C\log |S|= 2^{24}\log |S|$. For the next inequality, we used $\sqrt{m}\le m\le |S|$.
\end{proof}

\section{Combinatorial anticoncentration deductions} \label{sec:anticonc-de}

In this section, we use combinatorial arguments to deduce Corollary \ref{coro:anticonc} from Theorem \ref{thm:anticonc}, and to also derive another corollary that we will need in our proof of Theorem \ref{thm:main}.

We start with the following simple observation.

\begin{lem}\label{lem:trivial-anticoncentration-bound}
Let $p$ be a prime, let $S\subseteq \mathbb{Z}_p$, and let $m\le |S|$ be a positive integer. Now, let $R\su S$ be a uniformly random subset of $S$ of size $m$. Then we have
    \[
        \max_z \mathbb{P}[\Sigma(R)=z] \le \frac{1}{|S|-m+1},
    \]
\end{lem}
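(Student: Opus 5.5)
We will prove Lemma \ref{lem:trivial-anticoncentration-bound} with an exchange argument. The idea is to sample $R$ as a uniformly random $(m-1)$-subset plus one more element, and to note that the last element pins down the sum.

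Sample $R$ in two stages. First choose a uniformly random subset $R'\su S$ of size $m-1$. Then choose a uniformly random element $Y\in S\setminus R'$, and set $R=R'\cup\{Y\}$. By symmetry, $R$ is then a uniformly random $m$-subset of $S$: every $m$-set arises from exactly $m$ pairs $(R',Y)$, each of probability $\binom{|S|}{m-1}^{-1}(|S|-m+1)^{-1}$.

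Now fix $z\in\mathbb{Z}_p$ and condition on any outcome of $R'$. We have $\Sigma(R)=z$ if and only if $Y=z-\Sigma(R')$. This is one specific element of $\mathbb{Z}_p$, and $Y$ is uniform on the set $S\setminus R'$ of size $|S|-m+1$. Hence the conditional probability is at most $1/(|S|-m+1)$. Averaging over $R'$ gives
\[
\mathbb{P}[\Sigma(R)=z]\le \frac{1}{|S|-m+1}.
\]
Since $z$ was arbitrary, taking the maximum over $z$ completes the proof.

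The only point needing care is that the two-stage sampling produces the uniform distribution on $m$-subsets. This follows from the counting above: each $m$-subset corresponds to exactly $m$ choices of which element plays the role of $Y$, so all $m$-subsets receive equal probability. For $m=1$, $R'$ is empty and the claim reduces to $1/|S|$, which is consistent.
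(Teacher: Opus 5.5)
Your proposal is correct and follows the paper's proof essentially verbatim: sample a uniformly random $(m-1)$-subset $R'$, then a uniform element of $S\setminus R'$, and observe that $\Sigma(R)=z$ forces that last element to equal $z-\Sigma(R')$. Your explicit counting check that the two-stage sampling yields the uniform distribution on $m$-subsets is a small addition that the paper leaves implicit.
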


\begin{proof}
    We sample the uniformly random subset $R\su S$ of size $R$ by first sampling a uniformly random subset $R'\su S$ of size $m-1$, and then sampling a uniformly random element $r\in S\setminus R'$, setting $R=R'\cup\{r\}$.

     Conditional on any outcome of $R'$, for any $z\in \mathbb{Z}_p$, we have
    \[
        \mathbb{P}[\Sigma(R)=z\mid R'] =\mathbb{P}[r=z-\Sigma(R')\mid R'] \le \frac{1}{|S\setminus R'|}=\frac{1}{|S|-m+1}.
    \]
Thus, for any $z\in \mathbb{Z}_p$, we can conclude
    \[
        \mathbb{P}[\Sigma(R)=z] \le  \frac{1}{|S|-m+1}.\qedhere
    \]
\end{proof}

Now, we are ready to deduce Corollary \ref{coro:anticonc} from Lemma \ref{lem:trivial-anticoncentration-bound} and Theorem \ref{thm:anticonc}.

\begin{proof}[Proof of Corollary \ref{coro:anticonc}]
Let the constant $C>0$ be as in Theorem \ref{thm:main}. We may assume that $|S|$ is sufficiently large with respect to $\eps$, since the statement in Corollary \ref{coro:anticonc} trivially holds for small $|S|$ (by choosing a large constant $C'_\eps>0$). In particular, we may assume $|S|\ge (4000C/\eps)\cdot (\log |S|)^2$ (and $|S|\ge 10$).

If $m\le C\log |S|$, the assertion of Corollary \ref{coro:anticonc} follows directly from Lemma \ref{lem:trivial-anticoncentration-bound}, since then (noting that $m\le C\log |S|\le |S|/2$)
\[\max_z \mathbb{P}[\Sigma(R)=z] \le \frac{1}{|S|-m+1}\le \frac{1}{|S|/2}\le \frac{2\sqrt{C\log |S|}}{|S|\sqrt{m}}\le \frac{1}{p} +\frac{2\sqrt{C}\cdot \sqrt{\log |S|}}{|S|\sqrt{m}}.\]
Furthermore, if $C\log |S|\le m\le 10^{-3}|S|/\log |S|$, the assertion of Corollary \ref{coro:anticonc} follows directly from Theorem \ref{thm:anticonc}. We may therefore assume that $m\ge 10^{-3}|S|/\log |S|$.

Let $m_2=\lfloor \eps 10^{-3}|S|/\log |S|\rfloor\ge (\eps/2) 10^{-3}|S|/\log |S|$, and note that $m_2\le m$. Letting $m_1=m-m_2$, we may now sample the uniformly random subset $R\su S$ of size $m$ by first sampling a uniformly random subset $R_1\su S$ of size $m_1$, and then sampling a uniformly random subset $R_2\su S\setminus R_1$ of size $m_2$, setting $R=R_1\cup R_2$ (then $R$ has size $m_1+m_2=m$).

Note for any outcome of $R_1$, we have $|S\setminus R_1|\ge |S|-m\ge |S|-(1-\eps)|S|=\eps|S|$ and hence $m_2\le \eps 10^{-3}|S|/\log |S|\le 10^{-3}|S\setminus R_1|/\log |S\setminus R_1|$, as well as $m_2\ge (\eps/2) 10^{-3}|S|/\log |S|\ge C\log |S|\ge C\log |S\setminus R_1|$. Therefore, conditional on any outcome of $R_1$, by Theorem \ref{thm:anticonc}, for any $z\in \mathbb{Z}_p$, we have
    \[
        \mathbb{P}[\Sigma(R)=z\mid R_1] =\mathbb{P}[\Sigma(R_2)=z-\Sigma(R_1)\mid R_1] \le \frac{1}{p} + \frac{C}{|S\setminus R_1|\sqrt{m_2}}.
    \]
Noting that $|S\setminus R_1|\sqrt{m_2}\ge \eps |S|\cdot \sqrt{(\eps/2) 10^{-3}|S|/\log |S|}\ge \eps^{3/2}|S|^{3/2}/(50\sqrt{\log |S|})$, we can conclude that
    \[
        \mathbb{P}[\Sigma(R)=z\mid R_1] \le \frac{1}{p} + \frac{50C\eps^{-3/2}\sqrt{\log |S|}}{|S|^{3/2}}\le \frac{1}{p} +\frac{50C\eps^{-3/2}\sqrt{\log |S|}}{|S|\sqrt{m}}
    \]
for any outcome of $R_1$ and any $z\in \mathbb{Z}_p$. This shows that
    \[
        \mathbb{P}[\Sigma(R)=z] \le  \frac{1}{p} +\frac{50C\eps^{-3/2}\sqrt{\log |S|}}{|S|\sqrt{m}}
    \]
for any $z\in \mathbb{Z}_p$, finishing the proof.
\end{proof}

In the proof of our main result in Theorem \ref{thm:main}, we will use the following corollary of Corollary \ref{coro:anticonc} several times. Rather than with a single random subset $R\su S$ of a given size, this corollary is concerned with a uniformly random chain of subsets $R_1\su R_1\su \dots\su R_k\su S$ of given sizes.

\begin{cor}\label{coro:anticonc-chain}
    For every positive integer $k$, there exists a constant $C_k>0$ such that the following holds. Let $p$ be a prime, let $S\subseteq \mathbb{Z}_p$ be a subset of size $|S|\ge 2$, and consider integers $1\le m_1<\dots<m_k< |S|$. Now, let $R_1\su \dots\su R_k\su S$ be a uniformly random chain of subsets of $S$ satisfying $|R_i|=m_i$ for $i=1,\dots,k$. Then for any $z_1,\dots,z_k\in \mathbb{Z}_p$, we have
    \[
         \mathbb{P}[\Sigma(R_i)=z_i \text{ for }i=1,\dots,k] \le \sum_{j=0}^k \, \prod_{i\in \{0,\dots,k\}\setminus\{j\}}\Bigg(\frac{1}{p} + \frac{C_k\sqrt{\log |S|}}{|S|\sqrt{m_{i+1}-m_i}}\Bigg),
    \]
    where we set $m_0=0$ and $m_{k+1}=|S|$.
\end{cor}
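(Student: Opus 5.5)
Writing $p_i$ for the $i$-th factor of the displayed product (indexed by $i=0,\dots,k$), the plan is to decompose the chain into independent increments and then, for each $j$, use every increment except the $j$-th as a source of anticoncentration. Set $R_0=\emptyset$ and $R_{k+1}=S$, and let $T_i=R_{i+1}\setminus R_i$ for $i=0,\dots,k$. Then $|T_i|=m_{i+1}-m_i\ge 1$, the sets $T_0,\dots,T_k$ partition $S$, and $(T_0,\dots,T_k)$ is a uniformly random ordered partition of $S$ with these part sizes. The event $\{\Sigma(R_i)=z_i \text{ for } i=1,\dots,k\}$ is the same as $\{\Sigma(T_i)=w_i \text{ for } i=0,\dots,k-1\}$, where $w_i=z_{i+1}-z_i$ and $z_0=0$. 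Since $\Sigma(T_k)=\Sigma(S)-z_k$ is then determined, we may add the constraint $\Sigma(T_k)=w_k:=\Sigma(S)-z_k$. So it suffices to bound $\mathbb{P}[\Sigma(T_i)=w_i\ \forall i\in\{0,\dots,k\}]$.

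Fix $j$ to be the index maximizing $m_{j+1}-m_j$, so that $|T_j|\ge |S|/(k+1)$. We drop the constraint on $T_j$ and sample the remaining parts sequentially, in any order. Condition on the parts already revealed, and let $U$ be the set of unrevealed elements. The next part $T_i$ (with $i\neq j$) is a uniformly random subset of $U$ of size $|T_i|$. Because $T_j$ is still unrevealed, $U\supseteq T_j$, and hence $|U|\ge |S|/(k+1)$ and $|T_i|\le |U|-|T_j|\le (1-\eps)|U|$ with $\eps=1/(k+1)$. Corollary \ref{coro:anticonc} applied to $U$ then gives
\[
\mathbb{P}[\Sigma(T_i)=w_i\mid \text{past}]\le \frac1p+\frac{C'_\eps\sqrt{\log|U|}}{|U|\sqrt{|T_i|}}\le \frac1p+\frac{(k+1)C'_\eps\sqrt{\log |S|}}{|S|\sqrt{m_{i+1}-m_i}}.
\]
Multiplying these conditional bounds over all $i\neq j$ gives exactly the $j$-th summand, with $C_k=(k+1)C'_{1/(k+1)}$, and this is certainly at most the full sum over $j$. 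Two edge cases remain. If $|U|=1$ the corollary needs $|U|\ge 2$, but then the right-hand side exceeds $1$ once $C_k\ge 1$, so the bound is trivial. The constraint involving $T_k$ is handled in the same way as the others, or it can simply be dropped when $j\ne k$.

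The main point requiring care is checking the hypothesis $m\le(1-\eps)|S|$ of Corollary \ref{coro:anticonc}, and this is why the largest gap $j$ has to be the one left unconstrained. With that choice the sum over $j$ in the statement is more than we need, since a single term already suffices. The only other thing to verify is that the conditional distributions are uniform, which follows from the symmetry of the uniformly random ordered partition.
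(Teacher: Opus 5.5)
Your proposal is correct and follows essentially the same route as the paper: leave the largest increment $T_j=R_{j+1}\setminus R_j$ unconstrained, expose the other increments one at a time (the paper reveals $T_0,\dots,T_{j-1}$ and then $T_k,\dots,T_{j+1}$ via the complements $S\setminus R_i$), and apply Corollary~\ref{coro:anticonc} with $\eps=1/(k+1)$ at each step, obtaining $C_k=(k+1)C'_{1/(k+1)}$. Two small remarks: the case $|U|=1$ never arises, since $U\supseteq T_i\cup T_j$ with both parts nonempty; and when $j\ne k$ the factor $i=k$ of the $j$-th summand comes exactly from the constraint $\Sigma(T_k)=\Sigma(S)-z_k$, so that constraint must be kept as in your main argument, not dropped.
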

\begin{proof}
     Let $\eps=1/(k+1)$, and let us define $C_k=C'_\eps/\eps$ for the constant $C'_\eps$ in Corollary \ref{coro:anticonc}. Also let $z_1,\dots,z_k\in \mathbb{Z}_p$.
    
    As $(m_1-m_0)+(m_2-m_1)+\dots+(m_{k+1}-m_k)=m_{k+1}-m_0=|S|$, we can fix an index $j\in \{0,\dots,k\}$ with $m_{j+1}-m_j\ge |S|/(k+1)$. It now suffices to prove
    \[
         \mathbb{P}[\Sigma(R_i)=z_i \text{ for }i=1,\dots,k] \le \prod_{i\in \{0,\dots,k\}\setminus\{j\}} \,\,\Bigg(\frac{1}{p} + \frac{(C'_\eps/\eps)\sqrt{\log |S|}}{|S|\sqrt{m_{i+1}-m_i}}\Bigg).
    \]

    For $i=j+1,\dots,k$, let $R'_i=S\setminus R_i$, and note that then we have $R'_k\su R'_{k-1}\su \dots\su R'_{j+1}\su S\setminus R_j$ and $|R'_i|=|S|-m_i$ for $i=j+1,\dots,k$. Furthermore, conditional on any outcome of $R_1,\dots,R_j$, the chain of sets $R'_k\su R'_{k-1}\su \dots\su R'_{j+1}\su S\setminus R_j$ is uniformly random among all such nested chains in $S\setminus R_j$ with the appropriate sizes $|R'_i|=|S|-m_i$ for $i=j+1,\dots,k$.
    
    We expose the random chain $R_1\su R_1\su \dots\su R_k\su S$ by first exposing $R_1$ (this is a uniformly random subset of $S$ of size $m_1$), then $R_2\setminus R_1$ (this is now  a uniformly random subset of $S\setminus R_1$ of size $m_2-m_1$), and so on until we expose $R_j\setminus R_{j-1}$ (which is a uniformly random subset of $S\setminus R_{j-1}$ of size $m_j-m_{j-1}$). At this point we have determined $R_1,\dots,R_j$. Now, we expose $R'_k$ (this is a uniformly random subset of $S\setminus R_j$ of size $|S|-m_k$), then $R'_{k-1}\setminus R'_k$ (this is a uniformly random subset of $S\setminus (R_j\cup R'_k)$ of size $(|S|-m_{k-1})-(|S|-m_k)=m_k-m_{k-1}$), and so on until we have exposed the entire chain $R'_k\su R'_{k-1}\su \dots\su R'_{j+1}$ as well. This now determines $R_1\su R_1\su \dots\su R_k$.

    Observe that we have $\Sigma(R_i)=z_i$ for $i=1,\dots,k$ if and only if all of the following conditions hold: $\Sigma(R_1)=z_1$, and $\Sigma(R_i\setminus R_{i-1})=z_i-z_{i-1}$ for $i=2,\dots,j$, and $\Sigma(R'_k)=\Sigma(S)-z_{k}$, and $\Sigma(R'_{i}\setminus R'_{i+1})=z_{i+1}-z_i$ for $i=j+1,\dots,k-1$. This means that at every step of the exposure procedure above, we choose a uniformly random subset of the not-yet-chosen elements of a certain prescribed size, and we need to bound the probability of attaining a certain sum (conditional on our previously exposed information). At each such step, we can use Corollary \ref{coro:anticonc} to bound this probability. For this, we crucially use that at any step we choose at most a $(1-\eps)$-fraction of the not-yet-chosen elements. Indeed, this holds since at the very end of the procedure $m_{j+1}-m_j\ge |S|/(k+1)=\eps |S|$ elements remain unchosen (namely, the elements in $R_{j+1}\setminus R_j$), and so we cannot reduce the number of leftover elements by a factor smaller than $\eps$ at any step. Also note that at any step the set of not-yet-chosen elements has size at least $\eps |S|$.

    Thus, at any step, conditional on the previously exposed information, the probability of our random subset attaining the desired sum can indeed be bounded by Corollary \ref{coro:anticonc}. More specifically, the probability of having $\Sigma(R_1)=z_1$ is at most
    \[\frac{1}{p} + \frac{C'_\eps\sqrt{\log |S|}}{\eps|S|\sqrt{m_1}}=\frac{1}{p} + \frac{C'_\eps\sqrt{\log |S|}}{\eps|S|\sqrt{m_{1}-m_0}}.\]
    For any $i=2,\dots,j$, the probability of having $\Sigma(R_i\setminus R_{i-1})=z_i-z_{i-1}$, conditional on the previously exposed information, is at most
    \[\frac{1}{p} + \frac{C'_\eps\sqrt{\log |S|}}{\eps|S|\sqrt{m_{i}-m_{i-1}}}.\]
    The probability of having $\Sigma(R'_k)=\Sigma(S)-z_{k}$, conditional on the previously exposed information determining $S\setminus R_j$, is at most
    \[\frac{1}{p} + \frac{C'_\eps\sqrt{\log |S|}}{\eps|S|\sqrt{|S|-m_{k}}}=\frac{1}{p} + \frac{C'_\eps\sqrt{\log |S|}}{\eps|S|\sqrt{m_{k+1}-m_{k}}}.\]
    And, finally, for $i=j+1,\dots,k-1$, the probability of having $\Sigma(R'_{i}\setminus R'_{i+1})=z_{i+1}-z_i$, conditional on the previously exposed information, is at most
    \[\frac{1}{p} + \frac{C'_\eps\sqrt{\log |S|}}{\eps|S|\sqrt{m_{i+1}-m_{i}}}.\]
    Altogether this shows that
    \begin{align*}
         &\mathbb{P}[\Sigma(R_i)=z_i \text{ for }i=1,\dots,k]\\
         &\qquad\le \Bigg(\frac{1}{p} + \frac{C'_\eps\sqrt{\log |S|}}{\eps|S|\sqrt{m_{1}-m_0}}\Bigg)\cdot \prod_{i=2}^{j}\Bigg(\frac{1}{p} + \frac{C'_\eps\sqrt{\log |S|}}{\eps|S|\sqrt{m_{i}-m_{i-1}}}\Bigg)\cdot \Bigg(\frac{1}{p} + \frac{C'_\eps\sqrt{\log |S|}}{\eps|S|\sqrt{m_{k+1}-m_k}}\Bigg) \\
         &\qquad\qquad\qquad\qquad\qquad\qquad\qquad\qquad\qquad\qquad\qquad\qquad\qquad\qquad\cdot\prod_{i=j+1}^{k}\Bigg(\frac{1}{p} + \frac{C'_\eps\sqrt{\log |S|}}{\eps|S|\sqrt{m_{i+1}-m_{i}}}\Bigg)\\
         &\qquad=\prod_{i\in \{0,\dots,k\}\setminus\{j\}} \,\,\Bigg(\frac{1}{p} + \frac{(C'_\eps/\eps)\sqrt{\log |S|}}{|S|\sqrt{m_{i+1}-m_i}}\Bigg).\qedhere
    \end{align*}
\end{proof}

We end this section with the following bound for the sum of the terms on the right-hand side of the inequality in Corollary \ref{coro:anticonc-chain} obtained for different choices of $m_1<\dots<m_k$.

\begin{lem}\label{lem:sum-of-weird-function}
    Let $k$ be a positive integer, and let $C_k>0$ be the constant in Corollary \ref{coro:anticonc-chain}. Furthermore, let $p$ be a prime, and let $S\subseteq \mathbb{Z}_p$ be a subset of size $|S|\ge 2$. As in Corollary \ref{coro:anticonc-chain}, let $m_0=0$ and $m_{k+1}=|S|$. Then, we have
    \[\sum_{1\le m_1<\dots<m_k<|S|}\,\sum_{j=0}^k \,\prod_{i\in \{0,\dots,k\}\setminus\{j\}}\Bigg(\frac{1}{p} + \frac{C_k\sqrt{\log |S|}}{|S|\sqrt{m_{i+1}-m_i}}\Bigg)\le (k+1)\cdot \Bigg(\frac{|S|}{p} + \frac{2C_k\sqrt{\log |S|}}{|S|^{1/2}}\Bigg)^k,\]
    where the sum on the left-hand side is taken over all $k$-tuples $(m_1,\dots,m_k)\in \{1,\dots,|S|\}^k$ satisfying $1\le m_1<\dots<m_k<|S|$.
\end{lem}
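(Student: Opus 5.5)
Swap the two sums: the left-hand side becomes $\sum_{j=0}^k T_j$, where
\[T_j=\sum_{1\le m_1<\dots<m_k<|S|}\ \prod_{i\in\{0,\dots,k\}\setminus\{j\}}\Bigg(\frac{1}{p}+\frac{C_k\sqrt{\log|S|}}{|S|\sqrt{m_{i+1}-m_i}}\Bigg).\]
It then suffices to show $T_j\le \big(|S|/p+2C_k\sqrt{\log|S|}/|S|^{1/2}\big)^k$ for each fixed $j$.

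To bound $T_j$, reparametrize the tuple $(m_1,\dots,m_k)$ by the gaps $d_i=m_{i+1}-m_i\ge 1$ for $i\in\{0,\dots,k\}$. These are positive integers with $d_0+\dots+d_k=|S|$, and the tuple $(m_1,\dots,m_k)$ is uniquely determined by the $k$ gaps $d_i$ with $i\ne j$: the omitted gap $d_j$ is then forced to equal $|S|-\sum_{i\ne j}d_i$. So the map $(m_1,\dots,m_k)\mapsto (d_i)_{i\ne j}$ is injective into $\{1,\dots,|S|\}^{k}$. Since every summand is nonnegative and depends only on $(d_i)_{i\ne j}$, we can drop the constraint that the forced $d_j$ be positive. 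This gives
\[T_j\le \prod_{i\ne j}\ \sum_{d=1}^{|S|}\Bigg(\frac{1}{p}+\frac{C_k\sqrt{\log|S|}}{|S|\sqrt{d}}\Bigg)=\Bigg(\frac{|S|}{p}+\frac{C_k\sqrt{\log|S|}}{|S|}\sum_{d=1}^{|S|}d^{-1/2}\Bigg)^{k}.\]

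The remaining step is the elementary estimate $\sum_{d=1}^{N}d^{-1/2}\le 2\sqrt N$, obtained by comparing with $\int_0^N x^{-1/2}\,dx$. With $N=|S|$, the second term becomes at most $2C_k\sqrt{\log|S|}/|S|^{1/2}$, so each $T_j$ is bounded by the $k$-th power above. Summing over the $k+1$ values of $j$ gives the factor $k+1$ and completes the proof.

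No step here is a real obstacle. The one point needing care is the injectivity of the gap reparametrization and the decision to relax the positivity constraint on the forced gap $d_j$, which is valid because all terms are nonnegative.
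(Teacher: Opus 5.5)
Your proof is correct and is essentially the paper's argument. For each fixed $j$, the paper also drops the single constraint tied to the omitted factor (the coupling $m_j<m_{j+1}$), and bounds each remaining gap sum by $\sum_{t=1}^{|S|}\big(\frac{1}{p}+\frac{C_k\sqrt{\log |S|}}{|S|\sqrt{t}}\big)\le \frac{|S|}{p}+\frac{2C_k\sqrt{\log |S|}}{|S|^{1/2}}$. Your injective change of variables to the gaps $(d_i)_{i\ne j}\in\{1,\dots,|S|\}^k$ just does in one step what the paper does via an induction on $h$ plus the reflection $m'_i=|S|-m_{k+1-i}$ of the right-hand block.
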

\begin{proof}
    We first show by induction that for $h=0,\dots,k$ we have
    \begin{equation}\label{eq:inequality-simple-induct}
        \sum_{1\le m_1<\dots<m_h<|S|}\,\prod_{i=0}^{h-1} \,\,\Bigg(\frac{1}{p} + \frac{C_k\sqrt{\log |S|}}{|S|\sqrt{m_{i+1}-m_i}}\Bigg)\le \Bigg(\frac{|S|}{p} + \frac{2C_k\sqrt{\log |S|}}{|S|^{1/2}}\Bigg)^h.
    \end{equation}
    For $h=0$, the sum on the right-hand side of \eqref{eq:inequality-simple-induct} is has exactly one term, corresponding to the unique empty $0$-tuple, and this term is $1$ (as an empty product). Thus, for $h=0$ the inequality reads $1\le 1$, which is true. So let us now assume that we already showed \eqref{eq:inequality-simple-induct} for some $h\in \{0,\dots,k-1\}$. Now, for $h+1$, we have
    \begin{align*}
        &\sum_{1\le m_1<\dots<m_{h+1}<|S|}\,\prod_{i=0}^{h} \,\,\Bigg(\frac{1}{p} + \frac{C_k\sqrt{\log |S|}}{|S|\sqrt{m_{i+1}-m_i}}\Bigg)\\
        &\qquad\qquad=\sum_{1\le m_1<\dots<m_{h}<|S|}\Bigg(\prod_{i=0}^{h-1} \,\,\Bigg(\frac{1}{p} + \frac{C_k\sqrt{\log |S|}}{|S|\sqrt{m_{i+1}-m_i}}\Bigg)\cdot \sum_{m_{h+1}=m_{h}+1}^{|S|-1}\Bigg(\frac{1}{p} + \frac{C_k\sqrt{\log |S|}}{|S|\sqrt{m_{h+1}-m_h}}\Bigg)\Bigg)\\
        &\qquad\qquad\le \sum_{1\le m_1<\dots<m_{h}<|S|}\Bigg(\prod_{i=0}^{h-1} \,\,\Bigg(\frac{1}{p} + \frac{C_k\sqrt{\log |S|}}{|S|\sqrt{m_{i+1}-m_i}}\Bigg)\cdot \Bigg(\frac{|S|}{p} + \frac{2C_k\sqrt{\log |S|}}{|S|^{1/2}}\Bigg)\Bigg)\\
        &\qquad\qquad= \Bigg(\frac{|S|}{p} + \frac{2C_k\sqrt{\log |S|}}{|S|^{1/2}}\Bigg)\cdot \sum_{1\le m_1<\dots<m_{h}<|S|}\prod_{i=0}^{h-1} \,\,\Bigg(\frac{1}{p} + \frac{C_k\sqrt{\log |S|}}{|S|\sqrt{m_{i+1}-m_i}}\Bigg)\\
        &\qquad\qquad\le  \Bigg(\frac{|S|}{p} + \frac{2C_k\sqrt{\log |S|}}{|S|^{1/2}}\Bigg)\cdot \Bigg(\frac{|S|}{p} + \frac{2C_k\sqrt{\log |S|}}{|S|^{1/2}}\Bigg)^h=\Bigg(\frac{|S|}{p} + \frac{2C_k\sqrt{\log |S|}}{|S|^{1/2}}\Bigg)^{h+1},
    \end{align*}
    where the second inequality holds by the inductive assumption, and the first inequality follows from
    \[\sum_{m_{h+1}=m_{h}+1}^{|S|-1}\frac{C_k\sqrt{\log |S|}}{|S|\sqrt{m_{h+1}-m_h}}\le \sum_{t=1}^{|S|}\frac{C_k\sqrt{\log |S|}}{|S|\sqrt{t}}\le \frac{C_k\sqrt{\log |S|}}{|S|}\cdot 2\sqrt{|S|}=\frac{2C_k\sqrt{\log |S|}}{|S|^{1/2}}.\]
    This proves \eqref{eq:inequality-simple-induct}, as claimed.
    
    Let us now turn to the inequality in the lemma statement. To show this inequality, it clearly suffices to show
    \[\sum_{1\le m_1<\dots<m_k<|S|} \,\prod_{i\in \{0,\dots,k\}\setminus\{j\}} \,\,\Bigg(\frac{1}{p} + \frac{C_k\sqrt{\log |S|}}{|S|\sqrt{m_{i+1}-m_i}}\Bigg)\le \Bigg(\frac{|S|}{p} + \frac{2C_k\sqrt{\log |S|}}{|S|^{1/2}}\Bigg)^k\]
    for all $j=0,\dots,k$. So let us fix $j\in \{0,\dots,k\}$. Now, we have
    \begin{align*}
        &\sum_{1\le m_1<\dots<m_k<|S|} \,\prod_{i\in \{0,\dots,k\}\setminus\{j\}} \Bigg(\frac{1}{p} + \frac{C_k\sqrt{\log |S|}}{|S|\sqrt{m_{i+1}-m_i}}\Bigg)\\
        &\qquad\le \sum_{1\le m_1<\dots<m_j<|S|}\,\,\sum_{1\le m_{j+1}<\dots<m_k<|S|} \,\prod_{i=0}^{j-1} \Bigg(\frac{1}{p} + \frac{C_k\sqrt{\log |S|}}{|S|\sqrt{m_{i+1}-m_i}}\Bigg)\cdot \prod_{i=j+1}^{k} \Bigg(\frac{1}{p} + \frac{C_k\sqrt{\log |S|}}{|S|\sqrt{m_{i+1}-m_i}}\Bigg)\\
        &\qquad= \sum_{1\le m_1<\dots<m_j<|S|}\,\prod_{i=0}^{j-1} \Bigg(\frac{1}{p} + \frac{C_k\sqrt{\log |S|}}{|S|\sqrt{m_{i+1}-m_i}}\Bigg)\cdot \sum_{1\le m_{j+1}<\dots<m_k<|S|} \, \prod_{i=j+1}^{k} \Bigg(\frac{1}{p} + \frac{C_k\sqrt{\log |S|}}{|S|\sqrt{m_{i+1}-m_i}}\Bigg)\\
        &\qquad= \sum_{1\le m_1<\dots<m_j<|S|}\,\prod_{i=0}^{j-1} \Bigg(\frac{1}{p} + \frac{C_k\sqrt{\log |S|}}{|S|\sqrt{m_{i+1}-m_i}}\Bigg)\cdot \sum_{1\le m'_{1}<\dots<m'_{k-j}<|S|} \prod_{i=0}^{k-j-1} \Bigg(\frac{1}{p} + \frac{C_k\sqrt{\log |S|}}{|S|\sqrt{m'_{i+1}-m'_i}}\Bigg)\\
        &\qquad\le \Bigg(\frac{|S|}{p} + \frac{2C_k\sqrt{\log |S|}}{|S|^{1/2}}\Bigg)^j\cdot \Bigg(\frac{|S|}{p} + \frac{2C_k\sqrt{\log |S|}}{|S|^{1/2}}\Bigg)^{k-j}=\Bigg(\frac{|S|}{p} + \frac{2C_k\sqrt{\log |S|}}{|S|^{1/2}}\Bigg)^k,
    \end{align*}
    defining $m'_0=0$. Here, in the third step we substituted $m'_1=|S|-m_{k}$ and $m'_2=|S|-m_{k-1}$, and so on until $m'_{k-j}=|S|-m_{j+1}$. Furthermore, in the penultimate step, we used \eqref{eq:inequality-simple-induct}.
\end{proof}

\section{Rearrangement conjecture}\label{sec:rearrangement}

In this section, we prove Theorem \ref{thm:main}, using Corollary \ref{coro:anticonc}. Let $0<\alpha<1/2$ (note that in Theorem~\ref{thm:main} we may assume without loss of generality that $\alpha<1/2$, since the statement for some value of $\alpha$ automatically implies the same statement for all larger values of $\alpha$). Let $D=\lceil 3/\alpha\rceil$ and let $C_\alpha$ be large enough such that $C_\alpha\ge (10^4 \cdot 2^{40D})^{1/\alpha}$ and $C_\alpha\ge (D+1)\cdot 2^D\cdot D^{14D^2}\ge 100\cdot (5D)^{2D}\ge (40D)^D$ and such that we have $4\max(C_D,C_1)\sqrt{\log n}/n^{1/2}\le n^{-\alpha}$ for all $n\ge C_\alpha$ (where $C_1$ and $C_D$ denote the constants in Corollary \ref{coro:anticonc-chain} for $k=1$ and $k=D$).

Now, let $S\subseteq \mathbb{Z}_p\setminus\{0\}$ be as in Theorem \ref{thm:main}, meaning that $C_\alpha\le |S|\le p^{1-\alpha}$. Then in particular we have $|S|/p\le p^{-\alpha}\le |S|^{-\alpha}$ as well as $4C_1\sqrt{\log |S|}/|S|^{1/2}\le |S|^{-\alpha}$ and $4C_D\sqrt{\log |S|}/|S|^{1/2}\le |S|^{-\alpha}$.

We need to show that there is an ordering $s_1,s_2,\dots,s_{|S|}$ of the elements of $S$ such that all partial sums $s_1,s_1+s_2,\dots,s_1+s_2+\dots+s_{|S|}$ are distinct. In other words, this means that all the partial sums $s_a+s_{a+1}+\dots+s_b$ must be non-zero in $\mathbb{Z}_p$ for all $2\le a\le b\le |S|$.

An ordering $s_1,s_2,\dots,s_{|S|}$ of the elements of $S$ corresponds to a bijection $\sigma:\{1,\dots, |S|\}\to S$, where $\sigma(a)=s_a$ for all $a\in \{1,\dots, |S|\}$. For a bijection $\sigma:\{1,\dots, |S|\}\to S$,  we define
\[\Sigma(\sigma,[a,b])=\sum_{i=a}^b \sigma(i)=\sigma(a)+\sigma(a+1)+\dots+\sigma(b)\]
for all $1\le a< b\le |S|$. This is precisely $s_a+s_{a+1}+\dots+s_b$ for the corresponding ordering $s_1,s_2,\dots,s_{|S|}$ of the elements of $S$. Thus, to prove Theorem \ref{thm:main}, our task is to find a bijection $\sigma:\{1,\dots, |S|\}\to S$ such that $\Sigma(\sigma,[a,b])\ne 0$ for all $2\le a< b\le |S|$.

Our starting point is to consider a uniformly random bijection $\sigma:\{1,\dots, |S|\}\to S$. Typically, for such a  random bijection there will be some (but only few) pairs $(a,b)$ with $2\le a< b\le |S|$ and  $\Sigma(\sigma,[a,b])= 0$. Instead of $\sigma$ itself, we will therefore consider $\sigma \circ \pi$ for a suitable permutation $\pi: \{1,\dots, |S|\}\to\{1,\dots, |S|\}$ obtained by composing certain transpositions in order to change $\Sigma(\sigma,[a,b])$ for those intervals $[a,b]$ where we have $\Sigma(\sigma,[a,b])= 0$.

Given a bijection $\sigma:\{1,\dots, |S|\}\to S$, let $B(\sigma)\su \{3,\dots, |S|\}$ be the set consisting of the right endpoints of all intervals $[a,b]$ with $2\le a<b\le |S|$ and $\Sigma(\sigma,[a,b])= 0$. More formally, let $B(\sigma)\su \{1,\dots, |S|\}$ be the set of all $b\in \{1,\dots, |S|\}$ such that there exists $a\in \{2,\dots, |S|\}$ with $a<b$ and $\Sigma(\sigma,[a,b])= 0$ (note that then automatically $b\ge 3$).

We will now consider various bad events (namely the events $\mathcal{B}_1$,  $\mathcal{B}_2$ and $\mathcal{B}_3$ defined below), and show via our anticoncentration results in Corollary \ref{coro:anticonc-chain} that for a uniformly random bijection $\sigma:\{1,\dots, |S|\}\to S$ these bad events occur with very small probability. In order to prove  Theorem \ref{thm:main}, we can then fix a bijection $\sigma:\{1,\dots, |S|\}\to S$ such that none of these events holds, and take this as a starting point to construct the desired ordering $s_1,\dots,s_{|S|}$ of the elements in $S$.

\begin{lem}\label{lem:property1-new}
    For a uniformly random bijection $\sigma:\{1,\dots, |S|\}\to S$, let $\mathcal{B}_1$ be the event that there is some $b\in B(\sigma)$ with $|S|-30D\le b\le |S|$. Then we have $\mathbb{P}[\mathcal{B}_1]\le 1/100$.
\end{lem}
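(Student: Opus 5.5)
The plan is a union bound over all ``bad'' intervals, with each interval estimated by the $k=1$ case of Corollary \ref{coro:anticonc-chain}; summing over interval lengths is then handled by Lemma \ref{lem:sum-of-weird-function}. The event $\mathcal{B}_1$ occurs exactly when there is a pair $(a,b)$ with $2\le a<b\le |S|$, $|S|-30D\le b\le |S|$ and $\Sigma(\sigma,[a,b])=0$. So it suffices to bound, for each fixed $b$ in this range, the sum over $a$ of $\mathbb{P}[\Sigma(\sigma,[a,b])=0]$, and then multiply by the number $30D+1$ of choices of $b$.

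Fix such a pair $(a,b)$ and let $m=b-a+1$. Since $\sigma$ is a uniformly random bijection, the set $R=\sigma(\{a,\dots,b\})$ is a uniformly random subset of $S$ of size $m$. Because $a\ge 2$ and $b\ge a+1$, we have $1\le m\le |S|-1$. Moreover $\Sigma(\sigma,[a,b])=\Sigma(R)$. Applying Corollary \ref{coro:anticonc-chain} with $k=1$, $m_1=m$ and $z_1=0$ gives
\[\mathbb{P}[\Sigma(\sigma,[a,b])=0]\le \sum_{j=0}^{1}\prod_{i\in\{0,1\}\setminus\{j\}}\Bigg(\frac1p+\frac{C_1\sqrt{\log|S|}}{|S|\sqrt{m_{i+1}-m_i}}\Bigg),\]
with $m_0=0$ and $m_2=|S|$.

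For fixed $b$, distinct values of $a$ give distinct values of $m\in\{1,\dots,|S|-1\}$. Summing over $a$ is therefore bounded by the full sum over $1\le m_1<|S|$. By Lemma \ref{lem:sum-of-weird-function} with $k=1$, this full sum is at most
\[2\Bigg(\frac{|S|}{p}+\frac{2C_1\sqrt{\log|S|}}{|S|^{1/2}}\Bigg)\le 2\Big(|S|^{-\alpha}+\tfrac12|S|^{-\alpha}\Big)=3|S|^{-\alpha}.\]
Here we used the standing consequences of $C_\alpha\le|S|\le p^{1-\alpha}$, namely $|S|/p\le|S|^{-\alpha}$ and $4C_1\sqrt{\log|S|}/|S|^{1/2}\le|S|^{-\alpha}$.

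Multiplying by the $30D+1$ choices of $b$ gives
\[\mathbb{P}[\mathcal{B}_1]\le 3(30D+1)|S|^{-\alpha}\le 100D\cdot|S|^{-\alpha}.\]
The final step is to check that this is at most $1/100$. This follows from $|S|^{\alpha}\ge C_\alpha^{\alpha}\ge 10^4\cdot 2^{40D}\ge 10^4 D$. No step here is a real obstacle. The only points needing care are that $m\le|S|-1$ (guaranteed by $a\ge2$), so that Corollary \ref{coro:anticonc-chain} applies, and that the constants chosen for $C_\alpha$ absorb the factor $30D+1$.
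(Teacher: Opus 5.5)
Your proof is correct and follows the paper's argument. You take a union bound over $b\in\{|S|-30D,\dots,|S|\}$ and over $a$, apply the $k=1$ case of Corollary~\ref{coro:anticonc-chain} to each interval to get $\mathbb{P}[b\in B(\sigma)]\le 3|S|^{-\alpha}$, and conclude with $(30D+1)\cdot 3|S|^{-\alpha}\le 100D/(C_\alpha)^{\alpha}\le 1/100$. The only cosmetic difference is that you sum over the interval lengths by citing Lemma~\ref{lem:sum-of-weird-function} with $k=1$, whereas the paper bounds the $\sum_i 1/\sqrt{i}$ terms by hand; this is the same computation.
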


\begin{lem}\label{lem:property3-new}
    For a uniformly random bijection $\sigma:\{1,\dots, |S|\}\to S$, let $\mathcal{B}_2$ be the event that there exists $z\in\{1,\dots,|S|\}$ with $|B(\sigma)\cap \{z-10D,\dots,z+10D\}|> D$. Then we have $\mathbb{P}[\mathcal{B}_2]\le 3/100$.
\end{lem}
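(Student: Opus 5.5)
We plan a first-moment argument: a union bound over the window center $z$ and over the ways a window can contain $D+1$ bad endpoints, with each resulting event bounded by the chain anticoncentration of Corollary \ref{coro:anticonc-chain} and the counting of Lemma \ref{lem:sum-of-weird-function}.

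\textbf{Setup.} Write $P(c)=\Sigma(\sigma,[1,c])=\Sigma(\sigma(\{1,\dots,c\}))$ for the prefix sums, with $P(0)=0$. For a uniformly random bijection $\sigma$, the sets $\sigma(\{1,\dots,c\})$ over any increasing list of positions form a uniformly random chain of subsets of $S$ of the corresponding sizes. Then $\Sigma(\sigma,[a,b])=P(b)-P(a-1)$. If $\mathcal{B}_2$ holds, there exist $z$, distinct $b_1<\dots<b_{D+1}$ in $\{z-10D,\dots,z+10D\}$, and $a_j\in\{2,\dots,b_j-1\}$ with $P(b_j)=P(a_j-1)$ for all $j$. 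There are at most $|S|\cdot(20D+1)^{D+1}$ choices of $z$ and $b_1,\dots,b_{D+1}$. Since $D\ge 3/\alpha$, it suffices to show that for fixed $b_1,\dots,b_{D+1}$ the sum over all choices of $a_1,\dots,a_{D+1}$ of the probability of the event is at most roughly $\mathrm{poly}(D)\cdot |S|^{-\alpha D}$. The total is then at most about $|S|^{1-3}\cdot(40D)^{2D}$, which is below $3/100$ by the choice of $C_\alpha$.

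\textbf{Making the constraints independent.} The subtle point is that the constraints must be nondegenerate. The relations $P(b_j)=P(a_j-1)$ are linearly independent in the prefix variables, since each $b_j$ appears as the largest new index when the $b$'s are sorted. However, the set of positions $\{a_j-1,b_j\}$ may have up to $2D+2$ elements, whereas the paper has fixed the constants $C_D$ and $C_1$. I would first try to reduce to a genuine chain statement with exactly $D$ prescribed values, as follows.
\begin{itemize}
\item Drop one equation. Keep, for instance, the $D$ intervals $[a_j,b_j]$ with $j\le D$.
\item Use that all $b_j$ lie within $20D$ of each other, so $P(b_{j'})-P(b_j)$ is a sum of at most $20D$ elements. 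Pay a factor for these short differences by exposing the at most $20D+1$ elements $\sigma(z-10D),\dots,\sigma(z+10D)$ last, or by conditioning on them.
\item The remaining conditions then say that the prefix sums at the positions $a_j-1$ take values determined by the exposed data.
\end{itemize}
Concretely, I would condition on the set and the order of the values $\sigma(i)$ for $i$ in the window. After this conditioning, the positions $a_j-1$ lying outside the window form a chain in the remaining ground set, of size at least $|S|-20D-1$, with prescribed prefix sums. This is exactly the shape of Corollary \ref{coro:anticonc-chain}, with $k\le D$ and $m_i=a_{j_i}-1$. If several $a_j$ coincide, the $b_j$ force linear relations among the window elements themselves. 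These have probability at most $1/(|S|-O(D))$ each by the argument of Lemma \ref{lem:trivial-anticoncentration-bound}, which is even better.

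\textbf{Summing.} Summing the bound of Corollary \ref{coro:anticonc-chain} over all $a$'s via Lemma \ref{lem:sum-of-weird-function} gives at most
\[(D+1)\Bigl(\tfrac{|S|}{p}+\tfrac{2C_D\sqrt{\log|S|}}{|S|^{1/2}}\Bigr)^{D}\le (D+1)\,|S|^{-\alpha D}.\]
This uses the standing assumptions $|S|/p\le |S|^{-\alpha}$ and $4C_D\sqrt{\log |S|}/|S|^{1/2}\le|S|^{-\alpha}$. Choices with fewer than $D$ distinct outside positions are handled with $k<D$ and the corresponding constant. Alternatively, one can pad the chain with dummy positions, or note that the argument also works with $C_{k}$ for $k\le D$ after enlarging $C_\alpha$.

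\textbf{Main obstacle.} I expect the hardest step to be the bookkeeping that turns $D+1$ (or $D$) zero-sum conditions into a chain event with that many independent prescribed sums. Specifically, one must handle coincidences among the $a_j$ and positions $a_j-1$ falling inside the window, while ensuring that each exposure step still leaves an $\eps$-fraction of elements unchosen, as Corollary \ref{coro:anticonc} requires. If the conditioning on the window becomes messy, my fallback is to apply Corollary \ref{coro:anticonc-chain} directly to the full sorted position set, with $k\le 2D+2$. I would then sum over the free values of the $P(a_j-1)$, at a cost of at most a factor $p$ each, which is compensated by the corresponding $1/p$ terms. In this fallback the constant for $k=2D+2$ is again absorbed into $C_\alpha$.
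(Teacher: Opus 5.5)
Your plan is sound, but it handles the degenerate configurations differently from the paper.

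\textbf{The paper's route.} It first discards $\mathcal{B}_1$ and $\mathcal{B}_0$ (Lemmas \ref{lem:property1-new} and \ref{lem:property0-new}, each of probability at most $1/100$). It then anchors the window at the \emph{smallest} bad endpoint $b_0$ and keeps the other $D$ bad endpoints $b_1,\dots,b_D\in\{b_0+1,\dots,b_0+20D\}$. Outside $\mathcal{B}_0\cup\mathcal{B}_1$ every degeneracy disappears for free. If $a_i=a_h$ with $b_i<b_h$, then $\Sigma(\sigma,[b_i+1,b_h])=0$. If some $a_i\ge b_0$, then $[a_i,b_i]$ is a zero-sum segment inside $\{b_0,\dots,b_0+20D\}$. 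Both contradict $\neg\mathcal{B}_0$. So $a_1<\dots<a_D<b_0$, and the sets $\sigma(\{a_i,\dots,b_0-1\})$ form a uniform $D$-chain in $S\setminus\sigma(\{b_0,\dots,b_0+20D\})$ with sums $-\Sigma(\sigma,[b_0,b_i])$ fixed by the window. Corollary \ref{coro:anticonc-chain} is then needed only with $k=D$.

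\textbf{Your route.} You absorb the degeneracies into the first-moment sum. Each coincidence $a_j=a_{j'}$, or each $a_j$ inside the window, trades a chain factor of order $|S|^{-\alpha}$ for a window relation of probability $O(1/|S|)$. These relations have distinct top positions $b_j$, so exposing $\sigma$ on the window in increasing order bounds them jointly by a product. Since $\alpha<1$, every pattern contributes $O_D(|S|^{-\alpha D})$.

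\textbf{Comparison.} Your argument is self-contained, avoids $\mathcal{B}_0$ and $\mathcal{B}_1$, and yields $O_D(|S|^{-2})$ rather than $3/100$. The price is the case bookkeeping and the need for the chain corollary for every $k\le D$. The latter already holds with the constant $C_D$: in its proof, a gap of size at least $|S|/(D+1)$ exists whenever $k\le D$, so one may take $\eps=1/(D+1)$. Hence $C_\alpha$ need not be enlarged. The paper's detour through $\mathcal{B}_0$ pays off because $\mathcal{B}_0$ is reused in Lemma \ref{lem:property4-new}.

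\textbf{Two corrections.}

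First, let $w_0=\min W$. Conditioning on $\sigma$ restricted to $W$ does \emph{not} prescribe the prefix sums $P(a_j-1)$, since $P(w_0-1)$ is still random. What is prescribed is $\Sigma(\sigma(\{a_j,\dots,w_0-1\}))=-\Sigma(\sigma(\{w_0,\dots,b_j\}))$. So your chain must consist of these segments, viewed inside $S\setminus\sigma(W)$. Conditioning additionally on $\sigma(\{1,\dots,w_0-1\})$ instead would shrink the ground set and destroy the bound for small $w_0$.

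Second, your fallback fails. Multiplying a factor $\frac1p+\frac{C_k\sqrt{\log|S|}}{|S|\sqrt{m_{i+1}-m_i}}$ by $p$ gives $1+\frac{pC_k\sqrt{\log|S|}}{|S|\sqrt{m_{i+1}-m_i}}$. This is unbounded, since $p$ can be much larger than $|S|$. A free value can be summed out at cost $1$ only inside the sequential exposure, not after the corollary has been applied.
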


We will prove Lemmas \ref{lem:property1-new} and \ref{lem:property3-new} further below (towards the end of this section). The proofs of these lemmas rely on our anticoncentration results established in the previous sections.

For a permutation $\pi:\{1,\dots, |S|\}\to\{1,\dots, |S|\}$, let $\operatorname{Fix}(\pi)=\{t\in \{1,\dots, |S|\} : \pi(t)=t\}$ denote the set of fixed points of $\pi$. For any $x,y\in \{1,\dots, |S|\}$ with $x<y$, let $\pi_{x,y}:\{1,\dots, |S|\}\to\{1,\dots, |S|\}$ denote the transposition interchanging $x$ and $y$ (i.e.\ $\pi_{x,y}(x)=y$, $\pi_{x,y}(y)=x$ and $\pi_{x,y}(t)=t$ for all $t\in \{1,\dots, |S|\}\setminus \{x,y\}$).

Let us say that a collection $P$ of pairs $(x,y)$ with $x,y\in \{1,\dots, |S|\}$ and $x<y$ is \emph{admissible} if all the elements of $\{1,\dots, |S|\}$ appearing in the pairs in $P$ are distinct, and we have $y-x\le 5D$ for all $(x,y)\in P$.  To any such admissible $P$, we can associate the bijection $\pi_P:\{1,\dots, |S|\}\to\{1,\dots, |S|\}$ obtained by composing the transpositions $\pi_{x,y}$ for all $(x,y)\in P$ (note that here the order of these transpositions does not matter, since they all commute with each other as their supports are disjoint). We say that a permutation $\pi:\{1,\dots, |S|\}\to\{1,\dots, |S|\}$ is \emph{admissible}, if $\pi=\pi_P$ for some admissible  $P$. Also note that $P$ can be uniquely reconstructed from $\pi_P$, so the admissible permutations are in one-to-one correspondence with the admissible collections $P$ of pairs.

Intuitively, the admissible permutations are precisely those permutations that can be obtained by swapping a collection of disjoint pairs $(x,y)$ with $x<y\le x+5D$. For us, such permutations are relevant, because we will alter the bijection $\sigma:\{1,\dots, |S|\}\to S$ we are starting with in our argument via such a permutation $\pi$ (see also the discussion above), replacing $\sigma$ by $\sigma\circ \pi$. More precisely, $\pi$ will be obtained by composing carefully chosen transposition $\pi_{b,y}$ (with $b<y\le b+5D$) for each of the elements $b\in B(\sigma)$. We will chose these transpositions $\pi_{b,y}$ one by one for each $b\in B(\sigma)$.

For a bijection $\sigma:\{1,\dots, |S|\}\to S$, an element $b\in B(\sigma)$, and an admissible permutation $\pi:\{1,\dots, |S|\}\to\{1,\dots, |S|\}$, let us say that an element $y\in \{1,\dots, |S|\}$ with $b<y\le b+5D$ is \emph{blocked} for $\sigma$, $b$ and $\pi$, if we have $\Sigma(\sigma\circ \pi\circ \pi_{b,y}, [s,t])=0$ for some interval $[s,t]$ with $s,t\in \{2,\dots, |S|\}$ and $s<t$, such that $s\in \{b+1,\dots,y\}$ or $t\in \{b,\dots,y-1\}$.

Heuristically, if $y$ is blocked for $\sigma$, $b$ and $\pi$, then this means that we cannot replace $\sigma\circ \pi$ by $\sigma\circ \pi\circ \pi_{b,y}$ when trying to choose the desired transposition $\pi_{b,y}$ for the given element $b\in B(\sigma)$, because otherwise the interval $[s,t]$ would create a new problem.

\begin{lem}\label{lem:property4-new}
    For a uniformly random bijection $\sigma:\{1,\dots, |S|\}\to S$, let $\mathcal{B}_3$ be the event that there exists $b\in B(\sigma)$ and an admissible permutation $\pi:\{1,\dots, |S|\}\to\{1,\dots, |S|\}$ with $\{1,\dots,b-1\}\su \operatorname{Fix}(\pi)$ such that at least $2D$ different elements $y\in \{b+1,\dots, b+5D\}$ are blocked for $\sigma$, $b$ and $\pi$. Then we have $\mathbb{P}[\mathcal{B}_3]\le 1/25$.
\end{lem}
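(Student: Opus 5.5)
We plan a union bound over a bounded amount of combinatorial data that certifies $\mathcal{B}_3$, with every resulting event reduced to prescribed sums over a uniformly random chain of subsets of $S$, which we then control by Corollary \ref{coro:anticonc-chain} and Lemma \ref{lem:sum-of-weird-function}. The basic observation is the following. Since $\pi$ and $\pi\circ\pi_{b,y}$ are products of disjoint transpositions of length at most $5D$ (plus one more of length at most $5D$), we have
\[\Sigma(\sigma\circ\pi\circ\pi_{b,y},[s,t])=\Sigma(\sigma,[s,t])+\sum_{u\in U^+}\sigma(u)-\sum_{u\in U^-}\sigma(u)\]
for sets $U^\pm$ of positions within distance $10D$ of $s$, of $t$, or of $b$. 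Moreover, $U^\pm$ are determined by the restriction of $\pi$ to these windows together with $y$. Also, $b\in B(\sigma)$ gives an interval $[a,b]$ with $\Sigma(\sigma,[a,b])=0$.

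Thus $\mathcal{B}_3$ is certified by the following data:
\begin{itemize}
\item the index $b$ and the index $a$;
\item $2D$ blocked elements $y$, each with an interval $[s_y,t_y]$ having one endpoint in $\{b,\dots,b+5D\}$ (by the definition of blocked) and another endpoint $e_y$;
\item the local structure of $\pi$ in $10D$-windows around $b$ and around each $e_y$.
\end{itemize}
For fixed $b$ and fixed far endpoints, the number of choices for the local data is at most $(5D)^{O(D^2)}$, which is absorbed by the choice $C_\alpha\ge D^{14D^2}$. We then condition on the values of $\sigma$ on the bounded set $W$ of positions inside all the windows. This costs nothing, since the remaining values form a uniformly random bijection onto $S\setminus\sigma(W)$, and $|S\setminus\sigma(W)|\ge |S|/2$. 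Each zero-sum event becomes a statement of the form ``$\Sigma(\sigma,I)$ equals a prescribed value'' for an interval $I$ of the remaining positions. After shifting everything to a common anchor, these are prescribed values of $\Sigma(R_i)$ for a uniformly random chain $R_1\subseteq\dots\subseteq R_k$, where $R_i$ is the image of a prefix interval of remaining positions.

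We split into two cases.

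\emph{Case (i): many distinct far endpoints.} Suppose the $2D$ endpoints $e_y$ include at least $D$ distinct values lying outside the $20D$-window of $b$. After also separating these values from one another by passing to windows, we get a chain of length $k=D$. Corollary \ref{coro:anticonc-chain} bounds the probability by the chain expression. Summing over the far endpoints with Lemma \ref{lem:sum-of-weird-function} gives at most
\[(D+1)\Big(\tfrac{|S|}{p}+\tfrac{2C_D\sqrt{\log|S|}}{|S|^{1/2}}\Big)^D\le (D+1)\,|S|^{-\alpha D}\le (D+1)\,|S|^{-3}.\]
Multiplying by $|S|$ choices of $b$ and the $O_D(1)$ local choices yields $o(1)$.

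\emph{Case (ii): few distinct far endpoints.} Otherwise, by pigeonhole, either two different blocked $y,y'$ share the same far endpoint and the same local structure near it, or some intervals lie entirely inside the window of $b$. In either case, subtracting the two events (or taking the short one directly) yields a nontrivial signed relation $\sum_{u\in U}\pm\sigma(u)=0$ among boundedly many positions near $b$, involving at least one position with nonzero coefficient, such as $\sigma(y)$ versus $\sigma(y')$. Nontriviality uses that the $y$'s are distinct, that elements of $S$ are distinct, and that $0\notin S$. Conditioning on all other values, this relation has probability $O_D(1/|S|)$. We combine it with the event $\Sigma(\sigma,[a,b])=0$ (or with one far event), handled by Corollary \ref{coro:anticonc-chain} with $k=1$ after summing over $a$ via Lemma \ref{lem:sum-of-weird-function}. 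This contributes a further factor $|S|^{-\alpha}$, so the total is at most $|S|\cdot O_D(|S|^{-1})\cdot |S|^{-\alpha}\cdot O_D(1)$, which is at most $1/50$ for $|S|\ge C_\alpha$.

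The main obstacle is making Case (ii) rigorous. We must check that the difference of two blocked events really is a nontrivial linear relation not implied by the others, and that it is independent of the long-range constraint we pair it with. To do this, we first expose the positions in the $b$-window and use the single-element argument from the proof of Lemma \ref{lem:trivial-anticoncentration-bound}, and only then the long chain. A secondary technical point is bookkeeping the windows so that the chain intervals are genuinely nested with distinct endpoints, since Corollary \ref{coro:anticonc-chain} needs $m_1<\dots<m_k$.
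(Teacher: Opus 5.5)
\textbf{Case (i) has a genuine gap.} You condition on $\sigma$ over windows around \emph{every} far endpoint and then claim a chain of length $D$. Nothing prevents the $D$ distinct far endpoints from lying within $O(D)$ of one another, for example $t_i=t_1+i-1$. Then the windows merge, the sets $\{b+5D+1,\dots,t_i\}\setminus W$ coincide for all $i$, and Corollary \ref{coro:anticonc-chain} yields only a single factor. ``Separating the values by passing to windows'' may leave just one representative. The lost factors would have to come from local relations among positions near that cluster, which is far from $b$. So they fall outside your Case (ii) too, and would also force a union bound over the cluster's location. As written, the case split does not cover clustered far endpoints.

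The missing idea is the one in Lemma \ref{lem:auxiliary-lemma-2}: never condition near the far endpoints at all. Your local data determine an admissible $\pi'$: the pairs of $P$ that separate one of the sets $\pi_{b,y}(\{s,\dots,t\})$. This is an ``interesting'' permutation, and there are at most $D^{14D^2}$ of them once the far endpoints are fixed. It gives the same interval sums as $\pi$. For each fixed $\pi'$, the map $\sigma\circ\pi'$ is itself a uniformly random bijection, and $\Sigma(\sigma\circ\pi'\circ\pi_{b,y},[s,t])=\Sigma((\sigma\circ\pi')(\pi_{b,y}(\{s,\dots,b+5D\})))+\Sigma((\sigma\circ\pi')(\{b+5D+1,\dots,t\}))$. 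Conditioning only on $\sigma\circ\pi'$ restricted to $\{b,\dots,b+5D\}$, the far parts form a genuine chain of sizes $t_i-b-5D$. These sizes are distinct as soon as the $t_i$ are. Short sizes are harmless, because Lemma \ref{lem:sum-of-weird-function} sums over all sizes.

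With this device, no near/far split is needed. The paper splits the $2D$ blocked $y$ by \emph{side}: at least $D$ with $t>b+5D$, or at least $D$ with $s<b$. Your Case (i) silently mixes the two sides, which do not form one chain. The paper then gets distinctness of the $t_i$ (resp.\ $s_i$) directly from the complement of $\mathcal{B}_0$, and applies Lemmas \ref{lem:auxiliary-lemma-2} and \ref{lem:auxiliary-lemma-3}.

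\textbf{Case (ii) also needs repair.} Left-type intervals with $s$ near $b$ contribute $\sigma$ on $\{s,\dots,b-1\}$, so your $b$-window is two-sided. The witness $a$ of $b\in B(\sigma)$ may then lie inside it. In that case the pairing ``expose the window, then apply Corollary \ref{coro:anticonc-chain} to $\{a,\dots,b-1\}$'' is not available.

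Worse, if $(b,y)\in P$, then $\pi\circ\pi_{b,y}$ fixes $\{a,\dots,b\}$ pointwise, so $[a,b]$ itself makes $y$ blocked. Your ``short'' relation is then literally the event $\Sigma(\sigma,[a,b])=0$. Pairing gains nothing, and the union over $b$ only gives $|S|\cdot O(1/|S|)=O(1)$.

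The paper sidesteps both issues in two ways. It extracts local relations only on the one-sided window $\{b,\dots,b+20D\}$ (the event $\mathcal{B}_0$, Lemma \ref{lem:property0-new}), where $\{a,\dots,b-1\}$ is automatically disjoint from the exposed positions. And it treats left endpoints close to $b$ as ordinary short members of the chain rather than as local relations.
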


We will also  postpone the proof of Lemma \ref{lem:property4-new} to later (it again uses our anticoncentration results), in order to first see how Theorem \ref{thm:main} follows from Lemmas \ref{lem:property1-new} to \ref{lem:property4-new}.

\begin{proof}[Proof of Theorem \ref{thm:main}]
    For a uniformly random bijection $\sigma:\{1,\dots, |S|\}\to S$, the probability that one of the events $\mathcal{B}_1$, $\mathcal{B}_2$ and $\mathcal{B}_3$ in Lemmas \ref{lem:property1-new} to \ref{lem:property4-new} holds is at most  $\mathbb{P}[\mathcal{B}_1]+\mathbb{P}[\mathcal{B}_2]+\mathbb{P}[\mathcal{B}_3]\le 1/100+3/100+1/25=2/25<1$. In particular, there exists a bijection $\sigma:\{1,\dots, |S|\}\to S$ such that none of the events $\mathcal{B}_1$, $\mathcal{B}_2$ and $\mathcal{B}_3$ holds. Let us fix such a bijection $\sigma$.
    
    Now, let $B(\sigma)=\{b_1,\dots,b_\ell\}$ with $b_1>\dots> b_\ell$. Since the event $\mathcal{B}_1$ does not hold, we have $b_i<|S|-30D$ for $i=1,\dots,\ell$. Furthermore, since the event $\mathcal{B}_2$ does not hold, for every $z\in \{1,\dots, |S|\}$ there are at most $D$ different indices $i\in \{1,\dots,\ell\}$ with $|b_i-z|\le 10D$.

    We will now construct distinct $y_1,\dots,y_\ell\in \{1,\dots, |S|\}\setminus \{b_1,\dots,b_\ell\}$ such that $b_i<y_i\le b_i+5D$ for $i=1,\dots,\ell$ and such that the bijection $\sigma \circ \pi_{b_1,y_1}\circ \dots\circ\pi_{b_\ell,y_\ell}: \{1,\dots, |S|\}\to S$ corresponds to an ordering of the elements of $S$ with the desired conditions. In other words, this means that $\Sigma(\sigma \circ \pi_{b_1,y_1}\circ \dots\circ\pi_{b_\ell,y_\ell},[a,b])\ne 0$ for all $2\le a<b\le |S|$.

    We will construct $y_1,\dots,y_\ell$ one by one. At step $j$, we ensure that for all $2\le a<b\le |S|$ with $\Sigma(\sigma \circ \pi_{b_1,y_1}\circ \dots\circ\pi_{b_j,y_j},[a,b])= 0$ we have $b\in \{b_{j+1},\dots,b_\ell\}$. Furthermore, when choosing $y_j$, we also ensure that $y_j\in \{1,\dots, |S|\}\setminus \{b_1,\dots,b_\ell,y_1,\dots,y_{j-1}\}$ and $b_j<y_j\le b_j+5D$.

    Let us now assume that for some $j\in \{1,\dots,\ell\}$ we have already constructed distinct $y_1,\dots,y_{j-1}\in \{1,\dots, |S|\}\setminus \{b_1,\dots,b_\ell\}$ with $b_i<y_i\le b_i+5D$ for $i=1,\dots,j-1$, such that for all $2\le a<b\le |S|$ with $\Sigma(\sigma \circ \pi_{b_1,y_1}\circ \dots\circ\pi_{b_{j-1},y_{j-1}},[a,b])= 0$ we have $b\in \{b_{j},\dots,b_\ell\}$. Note that in the case $j=1$ the latter condition holds trivially, since for all $2\le a<b\le |S|$ with $\Sigma(\sigma,[a,b])= 0$ we have $b\in B(\sigma)=\{b_{1},\dots,b_\ell\}$.

    First, recall that $b_j<|S|-30D$, so we have $\{b_j+1,\dots,b_j+5D\}\su \{1,\dots, |S|\}$. Observing that the collection consisting of the pairs $(b_i,y_i)$ for $i=1,\dots,j-1$ is admissible, we can consider the admissible permutation $\pi^*=\pi_{b_1,y_1}\circ \dots\circ\pi_{b_{j-1},y_{j-1}}$. Because $b_1,\dots, b_{j-1}> b_j$, and consequently $y_1,\dots, y_{j-1}> b_j$, we have $\{1,\dots,b_j\}\su \operatorname{Fix}(\pi^*)$. Therefore, since $\sigma$ does not satisfy the event $\mathcal{B}_3$ in Lemma \ref{lem:property4-new}, at most $2D$ elements $y\in \{b_j+1,\dots,b_j+5D\}$ are blocked for $\sigma$, $b_j\in B(\sigma)$ and  $\pi^*$.

    We claim that there exists $y_j\in \{b_j+1,\dots,b_j+5D\}$ which is distinct from $b_1,\dots,b_\ell,y_1,\dots,y_{j-1}$ and not blocked for $\sigma$, $b_j$ and  $\pi^*$. Indeed, the interval $\{b_j+1,\dots,b_j+5D\}$ can contain $b_i$ for at most $D$ indices $i\in \{1,\dots,\ell\}$ (since we have $|b_i-b_j|\le 5D$ for all such $i$). Furthermore, for any index $i\in \{1,\dots,j-1\}$ with $y_i\in \{b_j+1,\dots,b_j+5D\}$ we have $|b_i-b_j|\le |b_i-y_i|+|y_i-b_j|\le 5D+5D=10D$, and so there can also be at most $D$ indices $i\in \{1,\dots,j-1\}$ with $y_i\in \{b_j,\dots,b_j+5D\}$.  Thus, the number of elements $y\in \{b_j+1,\dots,b_j+5D\}$, such that $y$ is distinct from $b_1,\dots,b_\ell,y_1,\dots,y_{j-1}$, and $y$ is not blocked for $\sigma$, $b_j$ and  $\pi^*$, is at least $5D-2D-D-D=D>0$. In particular, we can choose $y_j\in \{b_j+1,\dots,b_j+5D\}\su \{1,\dots, |S|\}$ distinct from $b_1,\dots,b_\ell,y_1,\dots,y_{j-1}$, such that $y_j$ is not blocked for $\sigma$, $b_j$ and  $\pi^*$.

    The latter condition means that there is no interval $[a,b]$ with $a,b\in \{2,\dots, |S|\}$ and $a<b$ and $\Sigma(\sigma\circ \pi^*\circ \pi_{b_j,y_j}, [a,b])=0$, such that $a\in \{b_j+1,\dots,y_j\}$ or $b\in \{b_j,\dots,y_j-1\}$. This means that for any $2\le a<b\le |S|$ with $\Sigma(\sigma \circ \pi^*\circ \pi_{b_j,y_j},[a,b])= 0$ we have $a\not\in\{b_j+1,\dots,y_j\}$ and $b\not\in \{b_j,\dots,y_j-1\}$. We claim that then either both of $b_j$ and $y_j$ or neither of $b_j$ and $y_j$ belong to the interval $[a,b]$. Indeed, if $b_j\in [a,b]$, then we have $a\le b_j< y_j$ as well as $b\ge b_j$ and therefore $b\ge y_j$ (as $b\not\in \{b_j,\dots,y_j-1\}$), implying that $y_j\in [a,b]$. If $y_j\in [a,b]$, then we have $b\ge y_j>b_j$ as well as $a\le y_j$ and therefore $a\le b_j$ (as $a\not\in\{b_j+1,\dots,y_j\}$), implying $b_j\in [a,b]$. Thus, for any $2\le a<b\le |S|$ with $\Sigma(\sigma \circ \pi^*\circ \pi_{b_j,y_j},[a,b])= 0$, either both of $b_j$ and $y_j$ or neither of $b_j$ and $y_j$ belong to the interval $[a,b]$, and so in the sum
    \[\Sigma(\sigma \circ \pi^*\circ \pi_{b_j,y_j},[a,b])= \sigma(\pi^*(\pi_{b_j,y_j}(a)))+\sigma(\pi^*(\pi_{b_j,y_j}(a+1)))+\dots+\sigma(\pi^*(\pi_{b_j,y_j}(b)))\]
    either both or neither of the terms $\sigma(\pi^*(\pi_{b_j,y_j}(b_j)))=\sigma(\pi^*(y_j))$ and $\sigma(\pi^*(\pi_{b_j,y_j}(y_j)))=\sigma(\pi^*(b_j))$ appear. This means that we have
    \[\Sigma(\sigma \circ \pi^*\circ \pi_{b_j,y_j},[a,b])= \sigma(\pi^*(a)))+\sigma(\pi^*(a+1))+\dots+\sigma(\pi^*(b))=\Sigma(\sigma \circ \pi^*,[a,b]).\]
    Hence we obtain
    \[\Sigma(\sigma \circ \pi_{b_1,y_1}\circ \dots\circ\pi_{b_{j-1},y_{j-1}},[a,b])=\Sigma(\sigma \circ \pi^*,[a,b])=\Sigma(\sigma \circ \pi^*\circ \pi_{b_j,y_j},[a,b])= 0,\]
    and so we must have $b\in \{b_{j},\dots,b_\ell\}$. As $b\not\in\{b_j,b_j+1,\dots,y_j-1\}$, we have $b\ne b_j$ and consequently $b\in \{b_{j+1},\dots,b_\ell\}$.
    
    Thus, for any $2\le a<b\le |S|$ with $\Sigma(\sigma \circ \pi^*\circ \pi_{b_j,y_j},[a,b])= 0$ we have $b\in \{b_{j+1},\dots,b_\ell\}$. Noting that $\sigma \circ \pi^*\circ \pi_{b_j,y_j}=\sigma \circ \pi_{b_1,y_1}\circ \dots\circ\pi_{b_j,y_j}$, this shows that our chosen $y_j$ satisfies all of the desired conditions.

    After $\ell$ steps, we have constructed distinct $y_1,\dots,y_\ell\in \{1,\dots, |S|\}\setminus \{b_1,\dots,b_\ell\}$ with $b_i<y_i\le b_i+5D$ for $i=1,\dots,\ell$, such that for all $2\le a<b\le |S|$ with $\Sigma(\sigma \circ \pi_{b_1,y_1}\circ \dots\circ\pi_{b_\ell,y_\ell},[a,b])= 0$ we have $b\in \emptyset$. This means that there cannot be any $2\le a<b\le |S|$ with $\Sigma(\sigma \circ \pi_{b_1,y_1}\circ \dots\circ\pi_{b_\ell,y_\ell},[a,b])= 0$. Thus, $\sigma \circ \pi_{b_1,y_1}\circ \dots\circ\pi_{b_\ell,y_\ell}: \{1,\dots, |S|\}\to S$ is a bijection satisfying $\Sigma(\sigma \circ \pi_{b_1,y_1}\circ \dots\circ\pi_{b_\ell,y_\ell},[a,b])\ne 0$ for all $2\le a<b\le |S|$. Thus, defining $s_i=(\sigma \circ \pi_{b_1,y_1}\circ \dots\circ\pi_{b_\ell,y_\ell})(i)$ for $i=1,\dots,|S|$, we have
    \[s_a+\dots+s_b=(\sigma \circ \pi_{b_1,y_1}\circ \dots\circ\pi_{b_\ell,y_\ell})(a)+\dots+(\sigma \circ \pi_{b_1,y_1}\circ \dots\circ\pi_{b_\ell,y_\ell})(b)=\Sigma(\sigma \circ \pi_{b_1,y_1}\circ \dots\circ\pi_{b_\ell,y_\ell},[a,b])\ne 0\]
    for all $2\le a<b\le |S|$. Thus, the partial sums $s_1,s_1+s_2,\dots,s_1+\dots+s_{|S|}$ are all distinct.
\end{proof}

It remains to prove Lemmas \ref{lem:property1-new} to \ref{lem:property4-new}. We start with proving Lemma \ref{lem:property1-new}.

\begin{proof}[Proof of Lemma \ref{lem:property1-new}]
    It suffices to show that for each $b\in \{|S|-30D, \dots, |S|\}$ we have
    \[\mathbb{P}[b\in B(\sigma)]\le \frac{3}{|S|^{\alpha}}.\]
    Indeed, by a union bound this would imply (recalling $(C_\alpha)^\alpha\ge 10^4 \cdot 2^{40D}\ge 10^4D$)
    \[\mathbb{P}[\mathcal{B}_1]\le \sum_{b=|S|-30D}^{|S|}\mathbb{P}[b\in B(\sigma)]\le \frac{(30D+1)\cdot 3}{|S|^{\alpha}}\le \frac{100D}{(C_\alpha)^{\alpha}}\le \frac{1}{100},\]
    as desired. So let $b\in \{|S|-30D, \dots, |S|\}$, then we have
    \[\mathbb{P}[b\in B(\sigma)]\le \sum_{a=2}^{b-1} \mathbb{P}[\Sigma(\sigma,[a,b])=0] =\sum_{a=2}^{b-1} \mathbb{P}[\sigma(a)+\dots+\sigma(b)=0].\]
    For any $2\le a<b\le |S|$, the set $\{\sigma(a),\dots,\sigma(b)\}$ is a uniformly random subset of $S$ of size $b-a+1$. Thus, by Corollary \ref{coro:anticonc-chain} (applied with $k=1$ and $m_1=b-a+1$, noting that $2\le m_1\le |S|-1$), we have 
    \[\mathbb{P}[\sigma(a)+\dots+\sigma(b)=0] \le \Bigg(\frac{1}{p} + \frac{C_1\sqrt{\log |S|}}{|S|\sqrt{b-a+1}}\Bigg)+\Bigg(\frac{1}{p} + \frac{C_1\sqrt{\log |S|}}{|S|\sqrt{|S|-b-1+a}}\Bigg).\]
    Thus, we indeed obtain (recalling that $|S|/p\le |S|^{-\alpha}$ and $4C_1\sqrt{\log |S|}/|S|^{1/2}\le |S|^{-\alpha}$)
    \begin{align*}
    \mathbb{P}[b\in B(\sigma)]&\le \sum_{a=2}^{b-1} \Bigg(\frac{2}{p} + \frac{C_1\sqrt{\log |S|}}{|S|\sqrt{b-a+1}}+\frac{C_1\sqrt{\log |S|}}{|S|\sqrt{|S|-b-1+a}}\Bigg)\\
    &\le \frac{2|S|}{p}+\sum_{i=2}^{b-1} \frac{C_1\sqrt{\log |S|}}{|S|\sqrt{i}}+\sum_{i=|S|-b+1}^{|S|-2} \frac{C_1\sqrt{\log |S|}}{|S|\sqrt{i}}\\
    &\le 2|S|^{-\alpha}+\frac{2C_1\sqrt{\log |S|}}{|S|} \sum_{i=1}^{|S|}\frac{1}{\sqrt{i}}\le 2|S|^{-\alpha}+\frac{4C_1\sqrt{\log |S|}}{|S|^{1/2}}\le 3|S|^{-\alpha}.\qedhere
    \end{align*}
\end{proof}

Before proving Lemmas \ref{lem:property3-new} and \ref{lem:property4-new}, we first prove the following auxiliary lemma.

\begin{lem}\label{lem:property0-new}
    For a uniformly random bijection $\sigma:\{1,\dots, |S|\}\to S$, let $\mathcal{B}_0$ be the event that for some $b\in B(\sigma)$ with  $b\le |S|-30D$ there exist distinct subsets $J,J'\su \{b,b+1,\dots,b+20D\}$ such that $\Sigma(\sigma(J))=\Sigma(\sigma(J'))$. Then we have $\mathbb{P}[\mathcal{B}_0]\le 1/100$.
\end{lem}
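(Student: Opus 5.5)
We bound $\mathbb{P}[\mathcal{B}_0]$ by a union bound over all configurations. These consist of $b\le |S|-30D$, a left endpoint $a$ with $2\le a<b$ witnessing $b\in B(\sigma)$, and a pair of distinct subsets $J,J'\su \{b,\dots,b+20D\}$. Replacing $(J,J')$ by $(J\setminus J',J'\setminus J)$, we may assume $J,J'$ disjoint and not both empty. Since the elements of $S$ are nonzero and distinct, $\Sigma(\sigma(J))=\Sigma(\sigma(J'))$ is then a genuine linear condition $\sum_{j\in J}\sigma(j)-\sum_{j\in J'}\sigma(j)=0$ with coefficients $\pm1$ on a nonempty set $K=J\cup J'$ of at most $20D+1$ positions. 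For each $b$ there are at most $3^{20D+1}$ such pairs, a constant depending only on $\alpha$. So it suffices to show, for fixed $a<b$ and fixed $(J,J')$, that
\[\mathbb{P}\big[\Sigma(\sigma,[a,b])=0 \text{ and } \Sigma(\sigma(J))=\Sigma(\sigma(J'))\big]\le \frac{1}{|S|}\Big(\frac{1}{p}+\frac{C_1\sqrt{\log|S|}}{|S|\sqrt{b-a+1}}+\frac{C_1\sqrt{\log |S|}}{|S|\sqrt{|S|-b+a-1}}\Big)\cdot O_D(1).\]
Summing this over $a$ and $b$ then gives $O_D(1)\cdot(|S|/p+C_1\sqrt{\log|S|}/|S|^{1/2})\le O_D(1)|S|^{-\alpha}$, exactly as in the proof of Lemma~\ref{lem:property1-new}. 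This is at most $1/100$ by the choice of $C_\alpha$ (indeed $(C_\alpha)^\alpha\ge 10^4\cdot 2^{40D}$ is clearly designed to absorb $3^{20D+1}$).

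To get the displayed bound, expose the values in two steps. Pick a position $j_0\in K$ with $j_0\ne b$; this is possible unless $K=\{b\}$. The case $K=\{b\}$ would force $\sigma(b)=0$, which is impossible since $0\notin S$. First condition on $\sigma(j)$ for all $j\in K\setminus\{j_0\}$. The event $\Sigma(\sigma(J))=\Sigma(\sigma(J'))$ now pins $\sigma(j_0)$ to one specific value. Conditionally, $\sigma(j_0)$ is uniform on the remaining $|S|-|K|+1\ge |S|/2$ elements, so this has probability at most $2/|S|$. Next, condition additionally on $\sigma(j_0)$, and hence on all values at positions in $K$. Let $m'$ be the number of positions in $[a,b]\setminus K$. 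The remaining values $\sigma$ on $\{1,\dots,|S|\}\setminus K$ form a uniformly random bijection onto $S$ minus at most $20D+1$ fixed elements. The condition $\Sigma(\sigma,[a,b])=0$ then says that a uniformly random $m'$-subset of this set of size $|S|-|K|$ has a prescribed sum. Applying Corollary~\ref{coro:anticonc-chain} with $k=1$ to this smaller set gives the bracket above, up to a constant factor depending on $D$. This requires $1\le m'<|S|-|K|$. The upper bound holds because $b\le|S|-30D$ leaves positions after $b+20D$ outside $[a,b]$. If $m'=0$, then $[a,b]\su K$, which forces $b-a\le 20D$; there are only $O(D)$ choices of $a$ for such $b$, and we instead use only the factor $2/|S|$. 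This contributes $O_D(|S|)\cdot 3^{20D+1}\cdot O(D)\cdot 2/|S|$ overall, which is not small. So this case needs care.

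The main obstacle is exactly this degenerate regime where $[a,b]$ is short and mostly covered by $K$. Here both the events $\Sigma(\sigma,[a,b])=0$ and $\Sigma(\sigma(J))=\Sigma(\sigma(J'))$ are linear conditions on the same few positions, and we need to extract two independent factors of $1/|S|$. I would handle it by considering the vectors $\mathbf{1}_{[a,b]}$ and $\mathbf{1}_J-\mathbf{1}_{J'}$. If they are not proportional, pick two positions on which their $2\times2$ coefficient minor is nonsingular. Condition on all other values; the pair of values at these two positions is then determined up to $O(1)$ possibilities by the two equations, giving probability $O(1/|S|^2)$. If they are proportional, then $[a,b]=J\cup J'$ with all coefficients equal up to sign. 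That forces either $J'=\emptyset$, so the event is just $\Sigma(\sigma,[a,b])=0$ (one factor), or a contradiction. The one-factor subcase must be excluded differently. Note that $J=[a,b]$ with $a\ge b$ is impossible, so $J\ni b$ requires $a\le b$, and $a\ge b$ is needed for $J\su\{b,\dots\}$. Hence this case cannot occur, since $a<b$ means $a\notin\{b,\dots,b+20D\}$. So in the short-interval regime the two conditions are always independent, and the union bound closes with total $O_D(|S|\cdot|S|\cdot|S|^{-2}\cdot |S|^{-\alpha}\text{-type})$. To be careful, I would in fact use the non-degeneracy (position $a\notin K$) uniformly in all cases. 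Condition first on everything except $\sigma(j_0)$ and the values on $[a,b]\setminus K$, which always contains $a$. This gives a factor $2/|S|$ from $j_0$ times the Corollary~\ref{coro:anticonc-chain} bound with $m'\ge1$.
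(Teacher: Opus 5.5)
Your final argument is correct and is essentially the paper's. You pin one position of $J\Delta J'$ for a factor $2/|S|$, then apply Corollary~\ref{coro:anticonc-chain} with $k=1$ to the rest of $[a,b]$ and sum over $a$ to get $O(|S|^{-\alpha})$; the paper does the same but conditions on the whole window $\sigma(b),\dots,\sigma(b+20D)$, so $[a,b-1]$ is automatically disjoint from the conditioned positions. Your third paragraph is unnecessary, because $a<b$ puts $a$ outside the window and hence $m'\ge 1$ always. Keep the two-step exposure of your second paragraph rather than the last sentence's literal conditioning on everything outside $\{j_0\}\cup([a,b]\setminus K)$, which would collapse both constraints onto the single value $\sigma(j_0)$ and lose a factor.
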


\begin{proof}
    It suffices to show that for any $b\in\{1,\dots,|S|-30D\}$ and any two distinct subsets $J,J'\su \{b,b+1,\dots,b+20D\}$ we have
    \begin{equation}\mathbb{P}[b\in B(\sigma)\text{ and }\Sigma(\sigma(J))=\Sigma(\sigma(J'))]\le \frac{8}{|S|^{1+\alpha}}.\label{eq:prob-1}\end{equation}
    Indeed, given (\ref{eq:prob-1}), taking a union bound over the at most $(|S|-30D)\cdot 2^{20D+1}\cdot 2^{20D+1}\le |S|\cdot 2^{40D+2}$ possibilities for $b$, $J$ and $J'$ gives
    \[\mathbb{P}[\mathcal{B}_0]\le |S|\cdot 2^{40D+2}\cdot \frac{8}{|S|^{1+\alpha}}= \frac{2^{40D+5}}{|S|^{\alpha}}\le \frac{2^{40D+5}}{(C_\alpha)^{\alpha}}\le \frac{1}{100}.\]
    So let $b\in\{1,\dots,|S|-30D\}$, and consider distinct subsets $J,J'\su \{b,b+1,\dots,b+20D\}$. Note that the event $\Sigma(\sigma(J))=\Sigma(\sigma(J'))$ (i.e.\ the event $\sum_{j\in J}\sigma(j)=\sum_{j'\in J'}\sigma(j')$) is determined by the outcomes of $\sigma(b),\dots, \sigma(b+20D)$. Furthermore, choosing an index $i\in J\Delta J'$ and conditioning on any outcomes of $\sigma(b),\dots, \sigma(i-1), \sigma(i+1),\dots,\sigma(b+20D)$, we can see that there is at most one outcome for $\sigma(i)$ (among the $|S|-20D$ remaining elements in $S$) such that $\Sigma(\sigma(J))=\Sigma(\sigma(J'))$ is satisfied. Thus, we can conclude
    \[\mathbb{P}[\Sigma(\sigma(J))=\Sigma(\sigma(J'))]\le \frac{1}{|S|-20D
    }\le \frac{2}{|S|},\]
    since $|S|\ge C_\alpha\ge 50D$. Now, conditioning on any fixed outcomes of $\sigma(b),\dots, \sigma(b+20D)$ such that $\Sigma(\sigma(J))=\Sigma(\sigma(J'))$ holds, we have
    \begin{align*}
    &\mathbb{P}[b\in B(\sigma)\mid \sigma(b),\dots, \sigma(b+20D)]\\
    &\qquad\le \sum_{a=2}^{b-1}\mathbb{P}[\Sigma(\sigma,[a,b])=0\mid \sigma(b),\dots, \sigma(b+20D)]\\
    &\qquad=\sum_{a=2}^{b-1}\mathbb{P}[\sigma(a)+\dots+\sigma(b-1)=-\sigma(b)\mid \sigma(b),\dots, \sigma(b+20D)]\\
    &\qquad\le \sum_{a=2}^{b-1} \Bigg(\frac{1}{p} + \frac{C_1\sqrt{\log (|S|-20D-1)}}{(|S|-20D-1)\sqrt{b-a}}+\frac{1}{p} + \frac{C_1\sqrt{\log (|S|-20D-1)}}{(|S|-20D-1)\sqrt{|S|-20D-1-b+a}}\Bigg),
    \end{align*}
    where in the last step we applied Corollary \ref{coro:anticonc-chain} with $k=1$, noting that conditional on our fixed outcomes of $\sigma(b),\dots, \sigma(b+20D)$, the set $\{\sigma(a),\dots,\sigma(b-1)\}$ is a uniformly random subset of size $b-a$ in $S\setminus\{\sigma(b),\dots, \sigma(b+20D)\}$. Therefore we can conclude (recalling that $|S|\ge C_\alpha\ge 50D$ as well as $|S|/p\le |S|^{1-\alpha}$ and $4C_1\sqrt{\log |S|}/|S|^{1/2}\le |S|^{-\alpha}$)
    \[\mathbb{P}[b\in B(\sigma)\mid \sigma(b),\dots, \sigma(b+20D)]\le\frac{2|S|}{p}+2\cdot \frac{C_1\sqrt{\log |S|}}{(|S|/2)}\sum_{i=1}^{|S|}\frac{1}{\sqrt{i}}\le 2|S|^{-\alpha}+\frac{8C_1\sqrt{\log |S|}}{|S|^{1/2}}\le 4|S|^{-\alpha}\]
    for any fixed outcomes of $\sigma(b),\dots, \sigma(b+20D)$. Thus, overall we indeed obtain
    \[\mathbb{P}[b\in B(\sigma)\text{ and }\Sigma(\sigma(J))=\Sigma(\sigma(J'))]\le \frac{2}{|S|}\cdot 4|S|^{-\alpha}=\frac{8}{|S|^{1+\alpha}},\]
    proving \eqref{eq:prob-1}
\end{proof}

Now, we are ready to prove Lemma \ref{lem:property3-new}.

\begin{proof}[Proof of Lemma \ref{lem:property3-new}]
    For the events $\mathcal{B}_0$ and $\mathcal{B}_1$ in Lemmas \ref{lem:property0-new} and \ref{lem:property1-new}, we have $\mathbb{P}[\mathcal{B}_0]\le 1/100$ and $\mathbb{P}[\mathcal{B}_1]\le 1/100$. Therefore it suffices to prove that $\mathbb{P}[\mathcal{B}_2\setminus (\mathcal{B}_0\cup \mathcal{B}_1)]\le 1/100$.

    If $\mathcal{B}_2\setminus (\mathcal{B}_0\cup \mathcal{B}_1)$ (and hence $\mathcal{B}_2$) holds, there is $z\in\{1,\dots,|S|\}$ with $|B(\sigma)\cap \{z-10D,\dots,z+10D\}|> D$. Taking $b_0$ to be the minimal element of $B(\sigma)\cap \{z-10D,\dots,z+10D\}$, this means that there are distinct $b_0,\dots,b_D\in B(\sigma)$ with $b_1,\dots,b_D\in \{b_0+1,\dots,b_0+20D\}$. As $b_1,\dots,b_D\in B(\sigma)$, there are furthermore $a_1,\dots,a_D\in \{2,\dots,|S|\}$ with $a_i<b_i$ and $\Sigma(\sigma,[a_i,b_i])=0$ for $i=1,\dots,D$. Since $\mathcal{B}_1$ does not hold, we must have $b_0\le |S|-30D$.

    We claim that $a_1,\dots,a_D$ must be distinct. Indeed, assume that $a_i=a_h$ for two distinct $i,h\in \{1,\dots,D\}$, and assume without loss of generality that $b_i<b_h$. Now we have
    \[\Sigma(\sigma(\{b_i+1,\dots, b_h\}))=\Sigma(\sigma,[b_i+1,b_h])=\Sigma(\sigma,[a_h,b_h])-\Sigma(\sigma,[a_i,b_i])=0-0=0.\]
    Thus, we have found $b_0\in B(\sigma)$ with $b_0\le |S|-30D$ such that for the subset $J=\{b_i+1,\dots, b_h\}\su \{b_0,\dots, b_0+20D\} $ we have $J\ne \emptyset$, but $\Sigma(\sigma(J))=0=\Sigma(\sigma(\emptyset))$. This means that $\mathcal{B}_0$ holds, which is a contradiction. Therefore $a_1,\dots,a_D$ must indeed be distinct.

    We may now assume without loss of generality that $a_1<\dots<a_D$. We also claim that $a_D<b_0$. Indeed, if $a_D\ge b_0$ we would have $b_0\le a_D<b_D\le b_0+20D$ and furthermore $\Sigma(\sigma(\{a_D,\dots,b_D\}))=\Sigma(\sigma,[a_D,b_D])=0$. Thus, considering $b_0\in B(\sigma)$ and $J=\{a_D,\dots, b_D\}\su \{b_0,\dots, b_0+20D\} $ we would again have $J\ne \emptyset$, but $\Sigma(\sigma(J))=0=\Sigma(\sigma(\emptyset))$. So again $\mathcal{B}_0$ would hold, contradicting our assumption that $\mathcal{B}_2\setminus (\mathcal{B}_0\cup \mathcal{B}_1)$ holds.

    Therefore we must have $a_D<b_0$ and hence $a_1<\dots<a_D<b_0$. Summarizing this, we found that if $\mathcal{B}_2\setminus (\mathcal{B}_0\cup \mathcal{B}_1)$ holds, there must exist $b_0\in \{1,\dots,|S|-30D\}$ and $b_1,\dots,b_D\in \{b_0+1,\dots,b_0+20D\}$ as well as $a_1,\dots,a_D\in \{1,\dots,|S|\}$ with $a_1<\dots<a_D<b_0$ such that $\Sigma(\sigma,[a_i,b_i])=0$ for $i=1,\dots,D$.

    For any given $b_0\in \{1,\dots,|S|-30D\}$ and $b_1,\dots,b_D\in \{b_0+1,\dots,b_0+20D\}$, conditioning on any fixed outcomes of $\sigma(b_0),\dots,\sigma(b_0+20D)$, we have
    \begin{align*}
        &\mathbb{P}\Big[\text{there are } a_1<\dots<a_D<b_0\text{ with }\Sigma(\sigma,[a_i,b_i])=0\text{ for }i=1,\dots,D\,\Big|\,\sigma(b_0),\dots,\sigma(b_0+20D)\Big]\\
        &\qquad\le \sum_{1\le a_1<\dots<a_D<b_0}\mathbb{P}\Big[\Sigma(\sigma,[a_i,b_i])=0\text{ for }i=1,\dots,D\,\Big|\,\sigma(b_0),\dots,\sigma(b_0+20D)\Big]\\
        &\qquad= \sum_{1\le a_1<\dots<a_D<b_0}\mathbb{P}\Big[\Sigma(\sigma,[a_i,b_0-1])=-\Sigma(\sigma,[b_0,b_i])\text{ for }i=1,\dots,D\,\Big|\,\sigma(b_0),\dots,\sigma(b_0+20D)\Big].
    \end{align*}
    Each term on the right-hand side can be bounded by Corollary \ref{coro:anticonc-chain}, noting that $\Sigma(\sigma,[a_i,b_0-1])=\Sigma(\{\sigma(a_i),\dots,\sigma(b_0-1)\})$ for $i=1,\dots,D$, and that $\{\sigma(a_D),\dots,\sigma(b_0-1)\}\su \dots \su \{\sigma(a_1),\dots,\sigma(b_0-1)\}$ (conditional on our fixed outcomes of $\sigma(b_0),\dots,\sigma(b_0+20D)$) is a uniformly random chain of subsets of $S\setminus \{\sigma(b_0),\dots,\sigma(b_0+20D)\}$ of sizes $b_0-a_D,\dots,b_0-a_1$, where $1\le b_0-a_D<\dots<b_0-a_1<|S|-20D-1$. Thus, applying Corollary \ref{coro:anticonc-chain} and Lemma \ref{lem:sum-of-weird-function} we can conclude (also recalling that $|S|/p\le |S|^{-\alpha}$ and $4C_D\sqrt{\log |S|}/|S|^{1/2}\le |S|^{-\alpha}$ as well as $D=\lceil 3/\alpha\rceil$)
    \begin{align*}
        &\mathbb{P}\Big[\text{there are } a_1<\dots<a_D<b_0\text{ with }\Sigma(\sigma,[a_i,b_i])=0\text{ for }i=1,\dots,D\,\Big|\,\sigma(b_0),\dots,\sigma(b_0+20D)\Big]\\
        &\qquad\le (D+1)\cdot \Bigg(\frac{|S|-20D-1}{p} + \frac{2C_D\sqrt{\log (|S|-20D-1)}}{(|S|-20D-1)^{1/2}}\Bigg)^D\\
        &\qquad\le (D+1)\cdot \Bigg(\frac{|S|}{p} + \frac{4C_D\sqrt{\log |S|}}{|S|^{1/2}}\Bigg)^D\le (D+1)\cdot (2|S|^{-\alpha})^D=(D+1)2^D\cdot |S|^{-\alpha D} \le \frac{(D+1)2^D}{|S|^3}
    \end{align*}
    for any given $b_0\in \{1,\dots,|S|-30D\}$ and $b_1,\dots,b_D\in \{b_0+1,\dots,b_0+20D\}$, and any fixed outcomes of $\sigma(b_0),\dots,\sigma(b_0+20D)$. This shows that
    \[\mathbb{P}\Big[\text{there are } a_1<\dots<a_D<b_0\text{ and }\Sigma(\sigma,[a_i,b_i])=0\text{ for }i=1,\dots,D\Big]\le \frac{(D+1)2^D}{|S|^3}\]
    for any $b_0\in \{1,\dots,|S|-30D\}$ and $b_1,\dots,b_D\in \{b_0+1,\dots,b_0+20D\}$. Taking the union bound over all possible choices of $b_0\in \{1,\dots,|S|-30D\}$ and $b_1,\dots,b_D\in \{b_0+1,\dots,b_0+20D\}$, we can conclude that
    \[\mathbb{P}[\mathcal{B}_2\setminus (\mathcal{B}_0\cup \mathcal{B}_1)]\le |S|\cdot (20D)^D\cdot \frac{(D+1)2^D}{|S|^3}=\frac{(D+1)(40D)^D}{|S|^2}\le \frac{(D+1)(40D)^D}{(C_\alpha)^2}\le \frac{1}{100}.\qedhere\]
\end{proof}

To prove Lemma  \ref{lem:property4-new}, we also need another auxiliary lemma.

\begin{lem}\label{lem:auxiliary-lemma-2} 
    Let $b,b'\in \{2,\dots|S|-2\}$ with $b'-b=5D$, let $u_1,\dots,u_D\in \{b,\dots,b'\}$, and consider permutations $\pi_1,\dots,\pi_D: \{1,\dots,|S|\}\to \{1,\dots,|S|\}$ with $\{1,\dots,b-1\}\cup \{b'+1,\dots,|S|\}\su \operatorname{Fix}(\pi_i)$ for $i=1,\dots,D$. Then, for a uniformly random bijection $\sigma:\{1,\dots,|S|\}\to S$, it happens with probability at most $1/|S|^2$ that there exist $x_1,\dots,x_D\in \{b'+1,\dots,|S|\}$ with $x_1<\dots<x_D$, as well as an admissible permutation $\pi:\{1,\dots,|S|\}\to \{1,\dots,|S|\}$, such that $\Sigma(\sigma\circ \pi\circ \pi_i,[u_i,x_i])=0$ for $i=1,\dots,D$.
\end{lem}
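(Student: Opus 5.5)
The plan is to get rid of the arbitrary admissible permutation $\pi$ by a union bound over a bounded number of local patterns. After that, the event becomes a system of sum conditions on a random chain of subsets, and Corollary \ref{coro:anticonc-chain} together with Lemma \ref{lem:sum-of-weird-function} bounds it. Let $W_0=\{b-5D,\dots,b'+5D\}$, which has size at most $20D$, and for each $i$ let $V_i=\{x_i-5D+1,\dots,x_i+5D\}$.

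\textbf{Step 1 (localizing $\pi$).} Each $\pi_i$ maps $\{b,\dots,b'\}$ to itself and fixes everything else. Hence
\[
\pi_i(\{u_i,\dots,x_i\})=\pi_i(\{u_i,\dots,b'\})\cup\{b'+1,\dots,x_i\},
\]
and the first part is a fixed subset of $\{b,\dots,b'\}$. The permutation $\pi$ consists of disjoint swaps of length at most $5D$. Applying $\pi$ therefore changes the index set $\pi_i(\{u_i,\dots,x_i\})$ only inside $W_0$ and inside $V_i$. So
\[
\Sigma(\sigma\circ\pi\circ\pi_i,[u_i,x_i])=\Sigma(\sigma(\{b'+5D+1,\dots,x_i-5D\}))+\Sigma(\sigma(E_i))+\Sigma(\sigma(G_i)),
\]
for some $E_i\subseteq W_0$ and $G_i\subseteq V_i$ (ignore the first term if its interval is empty). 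The pair $(E_i,G_i)$ is determined by $\pi$ restricted to $W_0\cup V_i$, so there are at most $2^{O(D)}$ choices per $i$. We union bound over all $D$-tuples of such local patterns, at a cost of a factor $2^{O(D^2)}$ depending only on $D$. This removes $\pi$: for fixed patterns and fixed $x_1<\dots<x_D$, the event is a conjunction of $D$ explicit linear conditions on $\sigma$.

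\textbf{Step 2 (clusters and conditioning).} Split $x_1<\dots<x_D$ into maximal clusters in which consecutive gaps are at most $10D$, and let $g$ be the number of clusters. Condition on $\sigma$ restricted to $W_0$. Within a cluster, take its first element $x_i$ as the representative. Its condition states that the sum of $\sigma$ over $\{b'+5D+1,\dots,x_i-5D\}$, minus window terms, equals a prescribed value. Now expose the window values cluster by cluster. For a non-representative $x_{i+1}$ in a cluster, subtract the equation for $x_i$ from that for $x_{i+1}$. Up to already-exposed constants, this gives $\Sigma(\sigma(T))=c$, where $T=\pi(\{x_i+1,\dots,x_{i+1}\})$ is a nonempty set of indices inside the union of the cluster's windows. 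All coefficients are positive, since the sets come from a genuine increase of the interval. Choosing the last-exposed index of $T$ and conditioning on all other values shows that each such equation holds with probability at most $1/(|S|-O(D^2))\le 2/|S|$, exactly as in Lemma \ref{lem:property0-new}.

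\textbf{Step 3 (chain bound and counting).} After conditioning on all window values, the $g$ representative conditions say that the sums over the nested sets $\{b'+5D+1,\dots,x-5D\}\setminus\bigcup_i V_i$ take prescribed values. These sets form a uniformly random chain in the unexposed elements, and distinct clusters give distinct sizes. Summing over representative positions, Corollary \ref{coro:anticonc-chain} and Lemma \ref{lem:sum-of-weird-function}, with the bounds $|S|/p\le|S|^{-\alpha}$ and $4C_k\sqrt{\log|S|}/|S|^{1/2}\le|S|^{-\alpha}$, give a total of at most $(g+1)(2|S|^{-\alpha})^g$. For the $D-g$ non-representatives there are at most $10D$ positions each, and each contributes a factor $2/|S|$. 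The overall bound is therefore $2^{O(D^2)}\cdot|S|^{-\alpha g-(D-g)}\le 2^{O(D^2)}|S|^{-\alpha D}\le 2^{O(D^2)}|S|^{-3}$, which is at most $1/|S|^2$ because $D=\lceil 3/\alpha\rceil$ and $|S|\ge C_\alpha$ is large.

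\textbf{Main obstacle.} The delicate step is Step 2: the window conditions must be genuinely nondegenerate even though $\pi$ and the $\pi_i$ may shuffle indices. In addition, the chain sizes in Step 3 must be distinct, and exposing windows must not spoil uniformity of the remaining chain. The first thing to try is to order the exposure so that the largest index of $T$ is always fresh. If overlapping windows of adjacent clusters cause trouble, enlarge the cluster threshold to $20D$.
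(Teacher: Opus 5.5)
Your Step~1 is fine; it is essentially the paper's reduction to ``interesting'' admissible permutations. The gap is in Steps~2--3, and it comes from conditioning on $\sigma|_{W_0}$ at the outset. When the $x_i$ lie near $b'$, the indices that are supposed to carry the randomness are themselves in $W_0$.

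Take $\pi=\mathrm{id}$ and $x_i=b'+i$ for $i=1,\dots,D$. This is a single cluster, and every $T=\pi(\{x_i+1,\dots,x_{i+1}\})=\{b'+i+1\}$ lies in $W_0$. The representative's chain set $\{b'+5D+1,\dots,x_1-5D\}\setminus\bigcup_j V_j$ is empty. So after your conditioning all $D$ equations are deterministic. The claimed factor $2/|S|$ per non-representative is then false, and Corollary~\ref{coro:anticonc-chain} cannot be applied because it needs $m_1\ge 1$. All $D$ of the $x_i$ can sit in this range, so your argument extracts no decay from these configurations. The slack from $\alpha D\ge 3$ cannot absorb losing all $D$ factors.

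The remedies you list under ``main obstacle'' do not touch this. Write $A_j=\pi_j(\{u_j,\dots,x_j\})$. Every index of $T$ is indeed fresh relative to the earlier equations, since $T\cap\pi(A_j)=\emptyset$ for $j\le i$, but it has already been exposed as part of $W_0$.

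A milder version of the problem occurs at the other end. Since $W_0\supseteq\{b-5D,\dots,b-1\}$, when $b\le 5D+1$ and $x_D\ge |S|-5D$ the last chain set can consist of all unexposed elements, which violates $m_g<s'$. That case costs only one factor and could be absorbed by the slack.

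You can repair the main problem by not pre-exposing $W_0$. For each equation that involves only window indices, reserve an index $f_i\in\pi(\{x_{i-1}+1,\dots,x_i\})$ (with $x_0=b'$), which no earlier equation involves. Expose all other window values first, then $f_1,f_2,\dots$ in order. Treat an empty first chain set like a non-representative.

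The paper avoids all of this with one observation. For fixed $\pi$, $\tau=\sigma\circ\pi$ is again a uniformly random bijection, and the conditions read $\Sigma(\tau(\{b'+1,\dots,x_i\}))=-\Sigma(\tau(\pi_i(\{u_i,\dots,b'\})))$. Conditioning only on $\tau(b),\dots,\tau(b')$ turns these into conditions on a uniformly random chain of sizes $x_i-b'$. Because $b\ge 2$, these sizes automatically satisfy $1\le x_1-b'<\dots<x_D-b'<|S|-5D-1$. Corollary~\ref{coro:anticonc-chain} and Lemma~\ref{lem:sum-of-weird-function} then apply directly, with no clusters, windows or fresh-index bookkeeping. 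The only price is the union bound over at most $D^{14D^2}$ interesting $\pi$ per tuple $(x_1,\dots,x_D)$.
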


By flipping the ordering of $\{1,\dots,|S|\}$, we obtain the following analogous statement from Lemma \ref{lem:auxiliary-lemma-2}.

\begin{lem}\label{lem:auxiliary-lemma-3} 
    Let $b,b'\in \{2,\dots|S|-2\}$ with $b'-b=5D$, let $u_1,\dots,u_D\in \{b,\dots,b'\}$, and consider permutations $\pi_1,\dots,\pi_D: \{1,\dots,|S|\}\to \{1,\dots,|S|\}$ with $\{1,\dots,b-1\}\cup \{b'+1,\dots,|S|\}\su \operatorname{Fix}(\pi_i)$ for $i=1,\dots,D$. Then, for a uniformly random bijection $\sigma:\{1,\dots,|S|\}\to S$, it happens with probability at most $1/|S|^2$ that there exist $x_1,\dots,x_D\in \{1,\dots,b-1\}$ with $x_1<\dots<x_D$, as well as an admissible permutation $\pi:\{1,\dots,|S|\}\to \{1,\dots,|S|\}$, such that $\Sigma(\sigma\circ \pi\circ \pi_i,[x_i,u_i])=0$ for $i=1,\dots,D$.
\end{lem}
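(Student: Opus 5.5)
The plan is to deduce Lemma \ref{lem:auxiliary-lemma-3} from Lemma \ref{lem:auxiliary-lemma-2} by reversing the order of $\{1,\dots,|S|\}$. Let $n=|S|$ and let $\rho:\{1,\dots,n\}\to\{1,\dots,n\}$ be the reflection $\rho(t)=n+1-t$. Given $b,b',u_1,\dots,u_D,\pi_1,\dots,\pi_D$ as in Lemma \ref{lem:auxiliary-lemma-3}, define reflected data:
\begin{itemize}
\item $\tilde b=\rho(b')=n+1-b'$ and $\tilde b'=\rho(b)=n+1-b$, so that $\tilde b'-\tilde b=5D$;
\item $\tilde u_i=\rho(u_i)\in\{\tilde b,\dots,\tilde b'\}$;
\item $\tilde\pi_i=\rho\circ\pi_i\circ\rho$.
\end{itemize}
Since $\pi_i$ fixes $\{1,\dots,b-1\}\cup\{b'+1,\dots,n\}$, the permutation $\tilde\pi_i$ fixes $\rho$ of that set, which is $\{1,\dots,\tilde b-1\}\cup\{\tilde b'+1,\dots,n\}$.

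We also need $\tilde b,\tilde b'\in\{2,\dots,n-2\}$. This holds whenever $b,b'\in\{3,\dots,n-1\}$. The boundary cases $b=2$ or $b'=n-1$ are harmless, because the proof of Lemma \ref{lem:auxiliary-lemma-2} presumably only uses that the relevant ranges lie inside $\{1,\dots,n\}$. If needed, we state the reflection with the range shifted by one, or note that the condition $b'\le n-2$ in Lemma \ref{lem:auxiliary-lemma-2} is used only to make $\{b'+1,\dots,n\}$ nonempty. Checking that this endpoint bookkeeping is harmless is the only real obstacle.

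Next, let $\tilde\sigma=\sigma\circ\rho$. This is again a uniformly random bijection $\{1,\dots,n\}\to S$, since $\rho$ is a fixed permutation. For any permutation $\tau$ and any interval $[x,u]$, the set $\sigma\circ\tau([x,u])$ equals $\tilde\sigma\circ(\rho\tau\rho)(\rho[x,u])$, and $\rho[x,u]=[\rho(u),\rho(x)]$. Therefore
\[\Sigma(\sigma\circ\tau,[x,u])=\Sigma(\tilde\sigma\circ\rho\tau\rho,[\rho(u),\rho(x)]).\]

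Now suppose there are $x_1<\dots<x_D$ in $\{1,\dots,b-1\}$ and an admissible $\pi$ with $\Sigma(\sigma\circ\pi\circ\pi_i,[x_i,u_i])=0$ for all $i$. Set $\tilde\pi=\rho\pi\rho$ and $\tilde x_i=\rho(x_i)$, so that $\rho\pi\pi_i\rho=\tilde\pi\tilde\pi_i$. Then the identity above gives $\Sigma(\tilde\sigma\circ\tilde\pi\circ\tilde\pi_i,[\tilde u_i,\tilde x_i])=0$, where each $\tilde x_i$ lies in $\{\tilde b'+1,\dots,n\}$.

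Two small points remain. First, $\tilde\pi$ is admissible: the conjugate of a product of disjoint transpositions $\pi_{x,y}$ with $y-x\le 5D$ is the product of the disjoint transpositions $\pi_{\rho(y),\rho(x)}$, and these again satisfy $\rho(x)-\rho(y)=y-x\le 5D$. Second, the $\tilde x_i$ come in decreasing order. To fix this, reindex by the order reversal $i\mapsto D+1-i$, applied simultaneously to $\tilde u_i$ and $\tilde\pi_i$; the hypotheses of Lemma \ref{lem:auxiliary-lemma-2} are symmetric in the index.

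Thus the event in Lemma \ref{lem:auxiliary-lemma-3} for $\sigma$ is contained in the event of Lemma \ref{lem:auxiliary-lemma-2} for $\tilde\sigma$ with the reflected data. Since $\tilde\sigma$ is uniformly distributed, the latter event has probability at most $1/|S|^2$, which proves Lemma \ref{lem:auxiliary-lemma-3}.
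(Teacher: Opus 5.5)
Your reflection argument is exactly what the paper does (it simply states that Lemma~\ref{lem:auxiliary-lemma-3} follows from Lemma~\ref{lem:auxiliary-lemma-2} by flipping the order of $\{1,\dots,|S|\}$), and your checks of the conjugation identity, the admissibility of $\rho\pi\rho$, the fixed-point sets and the reindexing are correct. The endpoint issue needs no guesswork about the proof of Lemma~\ref{lem:auxiliary-lemma-2}: the only case with $\tilde b'\notin\{2,\dots,|S|-2\}$ is $b=2$, and then $\{1,\dots,b-1\}=\{1\}$ cannot contain $D\ge 2$ distinct points $x_1<\dots<x_D$, so the event is empty and there is nothing to prove.
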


Since Lemma \ref{lem:auxiliary-lemma-3} is completely analogous to Lemma \ref{lem:auxiliary-lemma-2}, we only prove Lemma \ref{lem:auxiliary-lemma-2}.

\begin{proof}[Proof of Lemma \ref{lem:auxiliary-lemma-2}]
    For $x_1,\dots,x_D\in \{b'+1,\dots,|S|\}$ with $x_1<\dots<x_D$, let us say that an admissible permutation $\pi:\{1,\dots,|S|\}\to \{1,\dots,|S|\}$ is \emph{interesting} for $(x_1,\dots,x_D)$ if for every pair $(q,r)\in P$ we have $|\{q,r\}\cap \pi_i(\{u_i,\dots,x_i\})|=1$ for some $i\in\{1,\dots,D\}$ (here, by $P$ we denote the admissible collection of pairs corresponding to $\pi$).

    For any outcome of $\sigma:\{1,\dots,|S|\}\to S$, and any $x_1,\dots,x_D\in \{b'+1,\dots,|S|\}$ with $x_1<\dots<x_D$, we claim the following: If there exists an  admissible permutation $\pi:\{1,\dots,|S|\}\to \{1,\dots,|S|\}$ with $\Sigma(\sigma\circ \pi\circ \pi_i,[u_i,x_i])=0$ for $i=1,\dots,D$, then there also exists such an admissible permutation which is interesting for $(x_1,\dots,x_D)$. Indeed, let $P$ be the admissible collection of pairs corresponding to $\pi$ (i.e.\ let $\pi=\pi_P$), and let $P'\su P$ be the subset of those pairs $(q,r)\in P$ satisfying $|\{q,r\}\cap \pi_i(\{u_i,\dots,x_i\})|=1$ for some $i\in\{1,\dots,D\}$. Now, taking $\pi'=\pi_{P'}$ to be the admissible permutation corresponding to $P'$, the permutation $\pi'$ is by definition interesting for $(x_1,\dots,x_D)$. Furthermore, for $i=1,\dots,D$, we have $\pi_{q,r}(\pi_i(\{u_i,\dots,x_i\}))=\pi_i(\{u_i,\dots,x_i\})$ for all $(q,r)\in P\setminus P'$, and hence $\pi_{P\setminus P'}(\pi_i(\{u_i,\dots,x_i\}))=\pi_i(\{u_i,\dots,x_i\})$. This shows that $\pi(\pi_i(\{u_i,\dots,x_i\}))=\pi'(\pi_i(\{u_i,\dots,x_i\}))$ and hence \[\Sigma(\sigma\circ \pi'\circ \pi_i,[u_i,x_i])=\Sigma(\sigma(\pi'(\pi_i(\{u_i,\dots,x_i\}))))=\Sigma(\sigma(\pi(\pi_i(\{u_i,\dots,x_i\}))))=\Sigma(\sigma\circ \pi\circ \pi_i,[u_i,x_i])=0\]
    for $i=1,\dots,D$. Thus, there exists an  admissible permutation $\pi'$, which is interesting for $(x_1,\dots,x_D)$, and satisfies $\Sigma(\sigma\circ \pi'\circ \pi_i,[u_i,x_i])=0$ for $i=1,\dots,D$.

    Thus, the event described in the statement of the lemma is equivalent to the event that there exist $x_1,\dots,x_D\in \{b'+1,\dots,|S|\}$ with $x_1<\dots<x_D$, as well as an admissible permutation $\pi:\{1,\dots,|S|\}\to \{1,\dots,|S|\}$ that is interesting for $(x_1,\dots,x_D)$, such that $\Sigma(\sigma\circ \pi\circ \pi_i,[u_i,x_i])=0$ for $i=1,\dots,D$. So it suffices to show that the probability of this event is at most $1/|S|^2$.

    Let us fix any $x_1,\dots,x_D\in \{b'+1,\dots,|S|\}$ with $x_1<\dots<x_D$. We claim that the number of admissible permutation $\pi:\{1,\dots,|S|\}\to \{1,\dots,|S|\}$ which are interesting for $(x_1,\dots,x_D)$ is at most $D^{14D^2}$. Indeed, letting $P$ denote the admissible collection of pairs corresponding to $\pi$, for every pair $(q,r)\in P$ we have $|\{q,r\}\cap \pi_i(\{u_i,\dots,x_i\})|=1$ for some $i\in\{1,\dots,D\}$. Thus, for every pair $(q,r)\in P$ we have $q\in \{b,\dots,b'\}$ or $r\in \{b,\dots,b'\}$ or $|\{q,r\}\cap \pi_i(\{b'+1,\dots,x_i\})|=1$. Since $\pi_i(\{b'+1,\dots,x_i\})=\{b'+1,\dots,x_i\}$ and $1\le r-q\le 5D=b'-b$, this means that $q\in \{b-5D, \dots,b'\}= \{b-5D, \dots,b+5D\}$ or $q\in \{x_i-5D+1, \dots,x_i\}$ for some $i\in\{1,\dots,D\}$. Thus, we have $q\in \{b-5D,\dots,b+5D\}\cup\bigcup_{i=1}^{D}\{x_i-5D+1, \dots,x_i\}$ for all $(q,r)\in P$. Recall that no two pairs in $P$ can start with the same element $q$, and that $r\in \{q+1,\dots,q+5D\}$ for all $(q,r)\in P$. Thus, to choose $P$, for every element $q\in \{b-5D,\dots,b+5D\}\cup\bigcup_{i=1}^{D}\{x_i-5D+1, \dots,x_i\}$ one has at most $5D$ options of choosing a corresponding element $r$ to take $(q,r)\in P$, in addition to the option of having no pair in $P$ starting with $q$. Noting that $|\{b-5D,\dots,b+5D\}\cup\bigcup_{i=1}^{D}\{x_i-5D+1, \dots,x_i\}|\le 10D+1+D\cdot 5D\le 7D^2$, we can conclude that the number of possibilities for $P$ and hence for $\pi$ is at most $(5D+1)^{7D^2}\le (D^2)^{7D^2}=D^{14D^2}$.

    Now, for any $x_1,\dots,x_D\in \{b'+1,\dots,|S|\}$ with $x_1<\dots<x_D$ and any of the at most $D^{14D^2}$ admissible permutations $\pi:\{1,\dots,|S|\}\to \{1,\dots,|S|\}$ which are interesting for $(x_1,\dots,x_D)$, we have
    \begin{align*}
    &\mathbb{P}[\Sigma(\sigma\circ \pi\circ \pi_i,[u_i,x_i])=0\text{ for }i=1,\dots,D] \\
     &\qquad= \mathbb{P}[\Sigma((\sigma\circ \pi)(\pi_i(\{u_i,\dots,x_i\})))=0\text{ for }i=1,\dots,D]\\
    &\qquad= \mathbb{P}[\Sigma((\sigma\circ \pi)(\{b'+1,\dots,x_i\}))=-\Sigma((\sigma\circ \pi)(\pi_i(\{u_i,\dots,b'\})))\text{ for }i=1,\dots,D],
    \end{align*}
    recalling that $\pi_i(\{b'+1,\dots,x_i\})=\{b'+1,\dots,x_i\}$. Note that, for our fixed $\pi$, the composition $\sigma\circ \pi: \{1,\dots,|S|\}\to S$ is a uniformly random bijection. Conditioning on any fixed outcomes of $(\sigma\circ \pi)(b),\dots,\allowbreak (\sigma\circ \pi)(b')$ (which then determine the value of $\Sigma((\sigma\circ \pi)(\pi_i(\{u_i,\dots,b'\})))$ for all $i=1,\dots,D$), we have
    \begin{align*}
    &\mathbb{P}[\Sigma((\sigma\circ \pi)(\{b'+1,\dots,x_i\}))=\!-\Sigma((\sigma\circ \pi)(\pi_i(\{u_i,\dots,b'\})))\text{ for }i\!=\!1,\dots,D | (\sigma\circ \pi)(b),\dots, (\sigma\circ \pi)(b')]\\
    &\qquad\le \sum_{j=0}^D \prod_{i\in \{0,\dots,D\}\setminus\{j\}} \,\,\Bigg(\frac{1}{p} + \frac{C_D\sqrt{\log s}}{s\sqrt{m_{i+1}(x_1,\dots,x_D)-m_i(x_1,\dots,x_D)}}\Bigg)
    \end{align*}
    by Corollary \ref{coro:anticonc-chain}, noting that $(\sigma\circ \pi)(\{b'+1,\dots,x_1\})\su \dots\su (\sigma\circ \pi)(\{b'+1,\dots,x_D\})$ is a uniformly random chain of subsets of $S\setminus \{(\sigma\circ \pi)(b),\dots, (\sigma\circ \pi)(b')\}$ of sizes $x_1-b',\dots,x_D-b'$, where $1\le x_1-b'<\dots<x_D-b'<|S|-(b'-b+1)$. Here, for convenience we define
    \[s=|S\setminus \{(\sigma\circ \pi)(b),\dots, (\sigma\circ \pi)(b')\}|=|S|-(b'-b+1)=|S|-5D-1,\]
    and $m_i(x_1,\dots,x_D)=x_i-b'$ for $i=1,\dots,D$ as well as $m_0(x_1,\dots,x_D)=0$ and $m_{D+1}(x_1,\dots,x_D)=s$ (then $m_1(x_1,\dots,x_D), \dots, m_D(x_1,\dots,x_D)$ are precisely the subset sizes in the chain).
    
    Since this holds for any fixed outcomes of $(\sigma\circ \pi)(b),\dots, (\sigma\circ \pi)(b')$, we can conclude that
        \begin{align*}
    &\mathbb{P}[\Sigma(\sigma\circ \pi\circ \pi_i,[u_i,x_i])=0\text{ for }i=1,\dots,D] \\
    &\qquad= \mathbb{P}[\Sigma((\sigma\circ \pi)(\{b'+1,\dots,x_i\}))=-\Sigma((\sigma\circ \pi)(\pi_i(\{u_i,\dots,b'\})))\text{ for }i=1,\dots,D]\\
    &\qquad\le \sum_{j=0}^D \prod_{i\in \{0,\dots,D\}\setminus\{j\}} \,\,\Bigg(\frac{1}{p} + \frac{C_D\sqrt{\log s}}{s\sqrt{m_{i+1}(x_1,\dots,x_D)-m_i(x_1,\dots,x_D)}}\Bigg)
    \end{align*}
    for any $x_1,\dots,x_D\in \{b'+1,\dots,|S|\}$ with $x_1<\dots<x_D$ and any of the at most $D^{14D^2}$ admissible permutations $\pi:\{1,\dots,|S|\}\to \{1,\dots,|S|\}$ which are interesting for $(x_1,\dots,x_D)$.

    Thus, for any $x_1,\dots,x_D\in \{b'+1,\dots,|S|\}$ with $x_1<\dots<x_D$, we can conclude
    \begin{align*}
    &\mathbb{P}[\exists \text{ admissible } \pi\text{ s.t. }\Sigma(\sigma\circ \pi\circ \pi_i,[u_i,x_i])=0\text{ for }i=1,\dots,D]\\
    &\qquad=\mathbb{P}[\exists \text{ admissible }\pi,\text{ interesting for }(x_1,\dots,x_D),\text{ s.t. }\Sigma(\sigma\circ \pi\circ \pi_i,[u_i,x_i])=0\text{ for }i=1,\dots,D]\\
    &\qquad \le D^{14D^2}\cdot \sum_{j=0}^D \prod_{i\in \{0,\dots,D\}\setminus\{j\}} \,\,\Bigg(\frac{1}{p} + \frac{C_D\sqrt{\log s}}{s\sqrt{m_{i+1}(x_1,\dots,x_D)-m_i(x_1,\dots,x_D)}}\Bigg).
    \end{align*}
    Thus, the probability that there exist $x_1,\dots,x_D\in \{b'+1,\dots,|S|\}$ with $x_1<\dots<x_D$ and an admissible permutation $\pi:\{1,\dots,|S|\}\to \{1,\dots,|S|\}$, such that $\Sigma(\sigma\circ \pi\circ \pi_i,[u_i,x_i])=0$ for $i=1,\dots,D$, is at most
    \begin{align*}
    &\sum_{b'+1\le x_1<\dots<x_D\le |S|} D^{14D^2}\cdot \sum_{j=0}^D \prod_{i\in \{0,\dots,D\}\setminus\{j\}} \,\,\Bigg(\frac{1}{p} + \frac{C_D\sqrt{\log s}}{s\sqrt{m_{i+1}(x_1,\dots,x_D)-m_i(x_1,\dots,x_D)}}\Bigg)\\
    &\qquad\qquad= D^{14D^2}\sum_{1\le m_1<\dots<m_D\le |S|-b'} \sum_{j=0}^D \prod_{i\in \{0,\dots,D\}\setminus\{j\}} \,\,\Bigg(\frac{1}{p} + \frac{2C_D\sqrt{\log s}}{s\sqrt{m_{i+1}-m_i}}\Bigg)\\
    &\qquad\qquad\le D^{14D^2}\sum_{1\le m_1<\dots<m_D< s} \sum_{j=0}^D \prod_{i\in \{0,\dots,D\}\setminus\{j\}} \,\,\Bigg(\frac{1}{p} + \frac{2C_D\sqrt{\log s}}{s\sqrt{m_{i+1}-m_i}}\Bigg)\\
    &\qquad\qquad\le D^{14D^2}\cdot (D+1)\cdot \Bigg(\frac{s}{p} + \frac{2C_D\sqrt{\log s}}{s^{1/2}}\Bigg)^D \\
    &\qquad\qquad\le D^{14D^2}\cdot (D+1)\cdot \Bigg(\frac{|S|}{p} + \frac{4C_D\sqrt{\log |S|}}{|S|^{1/2}}\Bigg)^D\\
    &\qquad\qquad\le D^{14D^2}\cdot (D+1)\cdot(2|S|^{-\alpha})^D=D^{14D^2}(D+1)2^D\cdot |S|^{-\alpha D}\le \frac{(D+1)2^D D^{14D^2}}{|S|^3}\le \frac{1}{|S|^2}
    \end{align*}
    defining $m_0=0$ and $m_{D+1}=s$. Here, at the first step we used the change of variables $m_i=x_i-b'$ for $i=1,\dots,D$ (noting that then $m_i$ agrees precisely with $m_i(x_1,\dots,x_D)$ defined above), in the third step we used Lemma \ref{lem:sum-of-weird-function}, in the fourth step we used that $|S|/2\le s\le |S|$, in the fifth step we used $|S|/p\le |S|^{-\alpha}$ and $4C_D\sqrt{\log |S|}/|S|^{1/2}\le |S|^{-\alpha}$, in the second-last step $D=\lceil 3/\alpha\rceil$, and in the last step $|S|\ge C_\alpha\ge (D+1)2^D D^{14D^2}$.
    \end{proof}

Finally, we now prove Lemma \ref{lem:property4-new}, which concludes the proof of Theorem \ref{thm:main}.

\begin{proof}[Proof of Lemma \ref{lem:property4-new}]
For the events $\mathcal{B}_0$ and $\mathcal{B}_1$ in Lemmas \ref{lem:property0-new} and \ref{lem:property1-new}, we have $\mathbb{P}[\mathcal{B}_0]\le 1/100$ and $\mathbb{P}[\mathcal{B}_1]\le 1/100$. Therefore it suffices to prove that $\mathbb{P}[\mathcal{B}_3\setminus (\mathcal{B}_0\cup \mathcal{B}_1)]\le 2/100$.

If the event $\mathcal{B}_3\setminus (\mathcal{B}_0\cup \mathcal{B}_1)$  holds, there exists $b\in B(\sigma)$, and admissible permutation $\pi:\{1,\dots, |S|\}\to\{1,\dots, |S|\}$ with $\{1,\dots,b-1\}\su \operatorname{Fix}(\pi)$, and $2D$ different elements $y\in \{b+1,\dots, b+5D\}$ which are blocked for $\sigma$, $b$ and $\pi$. As $\mathcal{B}_1$ does not hold, we must have $b< |S|-30D$ (and we must also have $b\ge 3$). Furthermore, as $\mathcal{B}_0$ does not hold, for any distinct subsets $J,J'\su \{b,b+1,\dots,b+20D\}$ we have $\Sigma(\sigma(J))\ne \Sigma(\sigma(J'))$. Thus, for any distinct subsets $J,J'\su \{b,b+1,\dots,b+5D\}$ we must have $\Sigma(\sigma(\pi(J)))\ne \Sigma(\sigma(\pi(J')))$, since $\pi(J),\pi(J')\su \{b,b+1,\dots,b+10D\}$ are distinct subsets (recalling that $\pi$ is admissible and $\{1,\dots,b-1\}\su \operatorname{Fix}(\pi)$). 

This in particular implies that $\Sigma(\sigma\circ \pi\circ \pi_{b,y}, [s,t])\ne 0$ for any $y\in \{b+1,\dots, b+5D\}$ and any $s,t\in \{b,\dots, b+5D\}$ with $s<t$. Indeed, we have $\Sigma(\sigma\circ \pi\circ \pi_{b,y}, [s,t])=\Sigma(\sigma(\pi(\pi_{b,y}(\{s,\dots,t\}))))\ne \Sigma(\sigma(\pi(\emptyset)))=0$ since $\pi_{b,y}(\{s,\dots,t\})\su \{b,\dots, b+5D\}$ is distinct from $\emptyset$. Therefore, for any $y\in \{b+1,\dots, b+5D\}$ and any interval $[s,t]$ with $\Sigma(\sigma\circ \pi\circ \pi_{b,y}, [s,t])= 0$ we must have $s<b$ or $t>b+5D$.

Recall that an element $y\in \{b+1,\dots, b+5D\}$ is blocked for $\sigma$, $b$ and $\pi$ if we have $\Sigma(\sigma\circ \pi\circ \pi_{b,y}, [s,t])=0$ for some interval $[s,t]$ with $s\in \{b+1,\dots,y\}$ (and hence $t>b+5D$) or $t\in \{b,\dots,y-1\}$ (and hence $s<b$). Since in total at least $2D$ elements $y\in \{b+1,\dots, b+5D\}$ are blocked, one of these two options must happen for at least $D$ different elements $y\in \{b+1,\dots, b+5D\}$.

So let $\mathcal{E}_1$ be the event that there exist $b\in \{2,\dots,|S|-30D\}$ and an admissible permutation $\pi:\{1,\dots, |S|\}\to\{1,\dots, |S|\}$, such that for any distinct subsets $J,J'\su \{b,\dots,b+5D\}$ we have $\Sigma(\sigma(\pi(J)))\ne \Sigma(\sigma(\pi(J')))$, and such that there are distinct $y_1,\dots,y_D\in \{b+1,\dots, b+5D\}$ as well as $s_1,\dots,s_D\in \{b+1,\dots,b+5D\}$ and $t_1,\dots,t_D\in \{b+5D+1,\dots,|S|\}$ with $s_i\le y_i$ such that $\Sigma(\sigma\circ \pi\circ \pi_{b,y_i}, [s_i,t_i])=0$ for $i=1,\dots,D$.

Similarly, let $\mathcal{E}_2$ be the event that there exist $b\in \{2,\dots,|S|-30D\}$ and an admissible permutation $\pi:\{1,\dots, |S|\}\to\{1,\dots, |S|\}$, such that for any distinct subsets $J,J'\su \{b,\dots,b+5D\}$ we have $\Sigma(\sigma(\pi(J)))\ne \Sigma(\sigma(\pi(J')))$, and such that there are distinct $y_1,\dots,y_D\in \{b+1,\dots, b+5D\}$ as well as $s_1,\dots,s_D\in \{1,\dots,b-1\}$ and $t_1,\dots,t_D\in \{b,\dots,b+5D-1\}$ with $t_i< y_i$ and $\Sigma(\sigma\circ \pi\circ \pi_{b,y_i}, [s_i,t_i])=0$ for $i=1,\dots,D$.

Then we have $\mathbb{P}[\mathcal{B}_3\setminus (\mathcal{B}_0\cup \mathcal{B}_1)]\le \mathbb{P}[\mathcal{E}_1]+\mathbb{P}[\mathcal{E}_2]$, and therefore it suffices to prove $\mathbb{P}[\mathcal{E}_1]\le 1/100$ and $\mathbb{P}[\mathcal{E}_2]\le 1/100$.

We start by bounding the probability of the event $\mathcal{E}_1$. For $b,\pi,y_1\dots,y_D,s_1\dots,s_D,t_1\dots,t_D$ as in the definition of the event $\mathcal{E}_1$, we claim that $t_1,\dots,t_D$ must be distinct. Indeed, assume that $t_i=t_j$ for two distinct indices $i,j\in \{1, \dots,D\}$, and assume without loss of generality that $y_i<y_j$. Then we have
\begin{align*}
0&=\Sigma(\sigma\circ \pi\circ \pi_{b,y_i}, [s_i,t_i])\\
&=\Sigma(\sigma(\pi(\pi_{b,y_i}(\{s_i,\dots,t_i\}))))\\
&=\Sigma(\sigma(\pi(\pi_{b,y_i}(\{s_i,\dots,b+5D\}))))+\Sigma(\sigma(\pi(\pi_{b,y_i}(\{b+5D+1,\dots,t_i\}))))\\
&=\Sigma(\sigma(\pi(\pi_{b,y_i}(\{s_i,\dots,b+5D\}))))+\Sigma(\sigma(\pi(\{b+5D,\dots,t_i\}))),
\end{align*}
meaning that $\Sigma(\sigma(\pi(\pi_{b,y_i}(\{s_i,\dots,b+5D\}))))=-\Sigma(\sigma(\pi(\{b+5D,\dots,t_i\})))$. Analogously we have $\Sigma(\sigma(\pi(\pi_{b,y_j}(\{s_j,\dots,b+5D\}))))=-\Sigma(\sigma(\pi(\{b+5D,\dots,t_j\})))$, and hence by our assumption $t_i=t_j$ we can conclude
\[\Sigma(\sigma(\pi(\pi_{b,y_i}(\{s_i,\dots,b+5D\}))))=\Sigma(\sigma(\pi(\pi_{b,y_j}(\{s_j,\dots,b+5D\}))))\]
But this contradicts the conditions in the definition of the event $\mathcal{E}_1$, since $\pi_{b,y_i}(\{s_i,\dots,b+5D\})$ and $\pi_{b,y_j}(\{s_j,\dots,b+5D\})$ are two distinct subsets of $\{b,\dots,b+5D\}$ (since $s_i\le y_i<y_j\le b+5D$, we have $y_j\in \pi_{b,y_i}(\{s_i,\dots,b+5D\})$, but since $s_j>b$, we have $y_j\not\in \pi_{b,y_j}(\{s_j,\dots,b+5D\})$). Thus $t_1,\dots,t_D$ must indeed be distinct. Upon relabeling the indices we may therefore assume $t_1<\dots<t_D$.

Thus, whenever the event $\mathcal{E}_1$ holds, there exist $b\in \{2,\dots,|S|-30D\}$ and $y_1,\dots,y_D,s_1,\dots,s_D\in \{b+1,\dots, b+5D\}$ as well as $t_1,\dots,t_D\in \{b+5D+1,\dots,|S|\}$ with $t_1<\dots<t_D$ and some admissible permutation $\pi:\{1,\dots, |S|\}\to\{1,\dots, |S|\}$, such that $\Sigma(\sigma\circ \pi\circ \pi_{b,y_i}, [s_i,t_i])=0$ for $i=1,\dots,D$. For any given $b\in \{2,\dots,|S|-30D\}$ and $y_1,\dots,y_D,s_1,\dots,s_D\in \{b+1,\dots, b+5D\}$, the probability that this happens is at most $1/|S|^2$ by Lemma \ref{lem:auxiliary-lemma-2} (applied with $b'=b+5D$, as well as $u_i=s_i$ and $\pi_i=\pi_{b,y_i}$ for $i=1,\dots,D$). Thus, by a union bound, we obtain
\[\mathbb{P}[\mathcal{E}_1]\le (|S|-30D-1)\cdot (5D)^{2D}\cdot \frac{1}{|S|^2}\le \frac{(5D)^{2D}}{|S|}\le \frac{(5D)^{2D}}{C_\alpha}\le \frac{1}{100}.\]

It remains to bound the probability of the event $\mathcal{E}_2$. For $b,\pi,y_1\dots,y_D,s_1\dots,s_D,t_1\dots,t_D$ as in the definition of the event $\mathcal{E}_2$, we claim that $s_1,\dots,s_D$ must be distinct. Indeed, assume that $s_i=s_j$ for two distinct indices $i,j\in \{1, \dots,D\}$, and assume without loss of generality that $y_i<y_j$. Then we have
\begin{align*}
0&=\Sigma(\sigma\circ \pi\circ \pi_{b,y_i}, [s_i,t_i])\\
&=\Sigma(\sigma(\pi(\pi_{b,y_i}(\{s_i,\dots,t_i\}))))\\
&=\Sigma(\sigma(\pi(\pi_{b,y_i}(\{s_i,\dots,b-1\}))))+\Sigma(\sigma(\pi(\pi_{b,y_i}(\{b,\dots,t_i\}))))\\
&=\Sigma(\sigma(\pi(\{s_i,\dots,b-1\})))+\Sigma(\sigma(\pi(\pi_{b,y_i}(\{b,\dots,t_i\}))),
\end{align*}
so we obtain that $\Sigma(\sigma(\pi(\pi_{b,y_i}(\{b,\dots,t_i\})))=-\Sigma(\sigma(\pi(\{s_i,\dots,b-1\})))$. Analogously we also have $\Sigma(\sigma(\pi(\pi_{b,y_j}(\{b,\dots,t_j\})))=-\Sigma(\sigma(\pi(\{s_j,\dots,b-1\})))$, and hence by our assumption $s_i=s_j$ we can conclude
\[\Sigma(\sigma(\pi(\pi_{b,y_i}(\{b,\dots,t_i\}))))=\Sigma(\sigma(\pi(\pi_{b,y_j}(\{b,\dots,t_j\}))))\]
But this contradicts the conditions in the definition of $\mathcal{E}_2$, since $\pi_{b,y_i}(\{b,\dots,t_i\})$ and $\pi_{b,y_j}(\{b,\dots,t_j\})$ are two distinct subsets of $\{b,\dots,b+5D\}$ (because we have $y_j=\pi_{b,y_j}(b)\in \pi_{b,y_j}(\{b,\dots,t_j\})$, but $y_j\not\in \pi_{b,y_i}(\{b,\dots,t_i\})$ since $t_i< y_i<y_j$). Thus $s_1,\dots,s_D$ must indeed be distinct. Upon relabeling the indices we may therefore assume $s_1<\dots<s_D$.

Thus, when the event $\mathcal{E}_2$ holds, there are $b\in \{2,\dots,|S|-30D\}$ and $y_1,\dots,y_D\in \{b+1,\dots, b+5D\}$ and $t_1,\dots,t_D\in \{b,\dots, b+5D-1\}$, as well as $s_1,\dots,s_D\in \{1,\dots,b-1\}$ with $s_1<\dots<s_D$ and some admissible permutation $\pi:\{1,\dots, |S|\}\to\{1,\dots, |S|\}$, such that $\Sigma(\sigma\circ \pi\circ \pi_{b,y_i}, [s_i,t_i])=0$ for $i=1,\dots,D$. For any given $b\in \{2,\dots,|S|-30D\}$ and $y_1,\dots,y_D\in \{b+1,\dots, b+5D\}$ and $t_1,\dots,t_D\in \{b,\dots, b+5D-1\}$, the probability that this happens is at most $1/|S|^2$ by Lemma \ref{lem:auxiliary-lemma-3} (applied with $b'=b+5D$, as well as $u_i=t_i$ and $\pi_i=\pi_{b,y_i}$ for $i=1,\dots,D$). Thus, by a union bound, we obtain
\[\mathbb{P}[\mathcal{E}_2]\le (|S|-30D-1)\cdot (5D)^{D}\cdot (5D)^{D}\cdot \frac{1}{|S|^2}\le \frac{(5D)^{2D}}{|S|}\le \frac{(5D)^{2D}}{C_\alpha}\le \frac{1}{100}.\qedhere\]
\end{proof}

\end{document}